\documentclass[11pt,a4paper,reqno]{amsart}

\usepackage{mathrsfs} 

\usepackage{subfig} 
\usepackage{graphicx}

\usepackage{amsmath,amssymb,graphics,epsfig,color,enumerate,psfrag, mathtools}
\usepackage{dsfont}
\usepackage{verbatim}
\newtheorem{Theorem}{Theorem}[section]

\newtheorem{Lemma}[Theorem]{Lemma}
\newtheorem{Proposition}[Theorem]{Proposition}

\theoremstyle{definition}
\newtheorem{Remark}[Theorem]{Remark}

\newtheorem{Definition}[Theorem]{Definition}

\newtheorem{Assumption}[Theorem]{Assumption}

\newcommand{\cC}{\ensuremath{\mathcal C}}

\newcommand{\cF}{\ensuremath{\mathcal F}}

\newcommand{\cN}{\ensuremath{\mathcal N}}

\newcommand{\cP}{\ensuremath{\mathcal P}}

\newcommand{\cX}{\ensuremath{\mathcal X}}

\newcommand{\bbE}{{\ensuremath{\mathbb E}} }

\newcommand{\bbG}{{\ensuremath{\mathbb G}} }

\newcommand{\bbN}{{\ensuremath{\mathbb N}} }

\newcommand{\bbP}{{\ensuremath{\mathbb P}} }

\newcommand{\bbR}{{\ensuremath{\mathbb R}} }

\newcommand{\bbT}{{\ensuremath{\mathbb T}} }

\newcommand{\bbZ}{{\ensuremath{\mathbb Z}} }

\newcommand{\var}{\operatorname{Var}}

\newcommand{\laa}{\langle \! \langle}
\newcommand{\raa}{\rangle \! \rangle}

\usepackage[colorlinks=true,linkcolor=blue,citecolor=blue,urlcolor=blue,hypertexnames=false]{hyperref}

\let\a=\alpha \let\b=\beta   \let\d=\delta  
 \let\g=\gamma       \let\l=\lambda
\let\m=\mu   \let\n=\nu   \let\o=\omega      
   \let\t=\tau   
 \let\x=\xi

\newcommand{\de}{\partial}

\author[A. Bou-Rabee]{Ahmed Bou-Rabee}
\address{Ahmed Bou-Rabee. University of Pennsylvania.}
\email{ahmedmb@sas.upenn.edu}

\author[V. Silvestri]{Vittoria Silvestri}
\address{Vittoria Silvestri. University of Rome La Sapienza.}
\email{silvestri@mat.uniroma1.it}

\author[A. Yadin]{Ariel Yadin}
\address{Ariel Yadin. Ben-Gurion University of the Negev.}
\email{yadina@bgu.ac.il}

\title{Gaussian fluctuations for Internal DLA on cylinders}

\begin{document}

\begin{abstract}
Internal DLA is a discrete random growth model describing growing clusters of particles. Its limiting shape and fluctuations are well understood when the underlying graph is the $d$-dimensional lattice or the cylinder $\bbZ_N \times \bbZ$. In the latter geometry, the average fluctuations of IDLA have been shown to converge to the GFF. In this note we generalise this result by showing that, for any vertex-transitive base graph $V_N$ satisfying an eigenvalue convergence condition, the average fluctuations of IDLA on the cylinder $V_N \times \bbZ$ are given by a GFF. On the way, we present an improved bound on the clusters' maximal fluctuations, which is of independent interest and which implies a shape theorem for IDLA on $V_N \times \bbZ$ for any vertex-transitive base graph $V_N$.
\end{abstract}

\maketitle

\section{Introduction} \label{sec:intro}
Internal Diffusion Limited Aggregation (IDLA) is a random growth model in which a cluster of particles grows one site at a time, with each new site sampled from the harmonic measure on the cluster boundary seen from an internal source. Meakin and Deutch~\cite{meakin1986formation} introduced the model, and Diaconis and Fulton~\cite{diaconis1991growth} rediscovered it a few years later. On the lattice $\bbZ^d$, the cluster starts from the single-site seed $A(0) = \{0\}$ and grows by
	\[ A(t) \coloneqq A(t-1) \cup \{ Z_t \}\,, \]
for each positive integer $t$, where $Z_t$ is the first exit location from $A(t-1)$ of an independent simple random walk started at the origin. Thus $A(t)$ is the IDLA cluster after $t$ steps.

Lawler, Bramson and Griffeath~\cite{lawler1992internal} proved a shape theorem: after rescaling, the cluster $A(t)$ converges in Hausdorff distance, with high probability, to a Euclidean ball. Lawler~\cite{lawler1995subdiffusive} then obtained the first subdiffusive bound on the maximal deviation of $A(t)$ from this limiting ball. Subsequent work of Jerison, Levine and Sheffield~\cite{jerison2012logarithmic, jerison2013internal} and of Asselah and Gaudilli\`ere~\cite{asselah2013logarithmic, asselah2013sublogarithmic, asselah2014lower} sharpened this maximal deviation to be square-root-logarithmic in the cluster radius in dimensions $d \geq 3$ (with a matching lower bound) and logarithmic in dimension $d = 2$. Sharpness of the logarithmic bound in dimension $d = 2$ remains open.

Jerison, Levine and Sheffield~\cite{jerison2014internal} also studied the average fluctuations of IDLA and showed that on $\bbZ^d$ they converge to a variant of the Gaussian Free Field (GFF). In a companion paper~\cite{jerison2014internal2}, the same authors considered IDLA on the cylinder $\bbZ_N \times \bbZ$ and proved that space-time averages of these fluctuations converge to the ordinary GFF; they also stated an analogous logarithmic maximal-fluctuation bound.

\begin{figure}[t]
	\centering
	\includegraphics[width=0.31\textwidth]{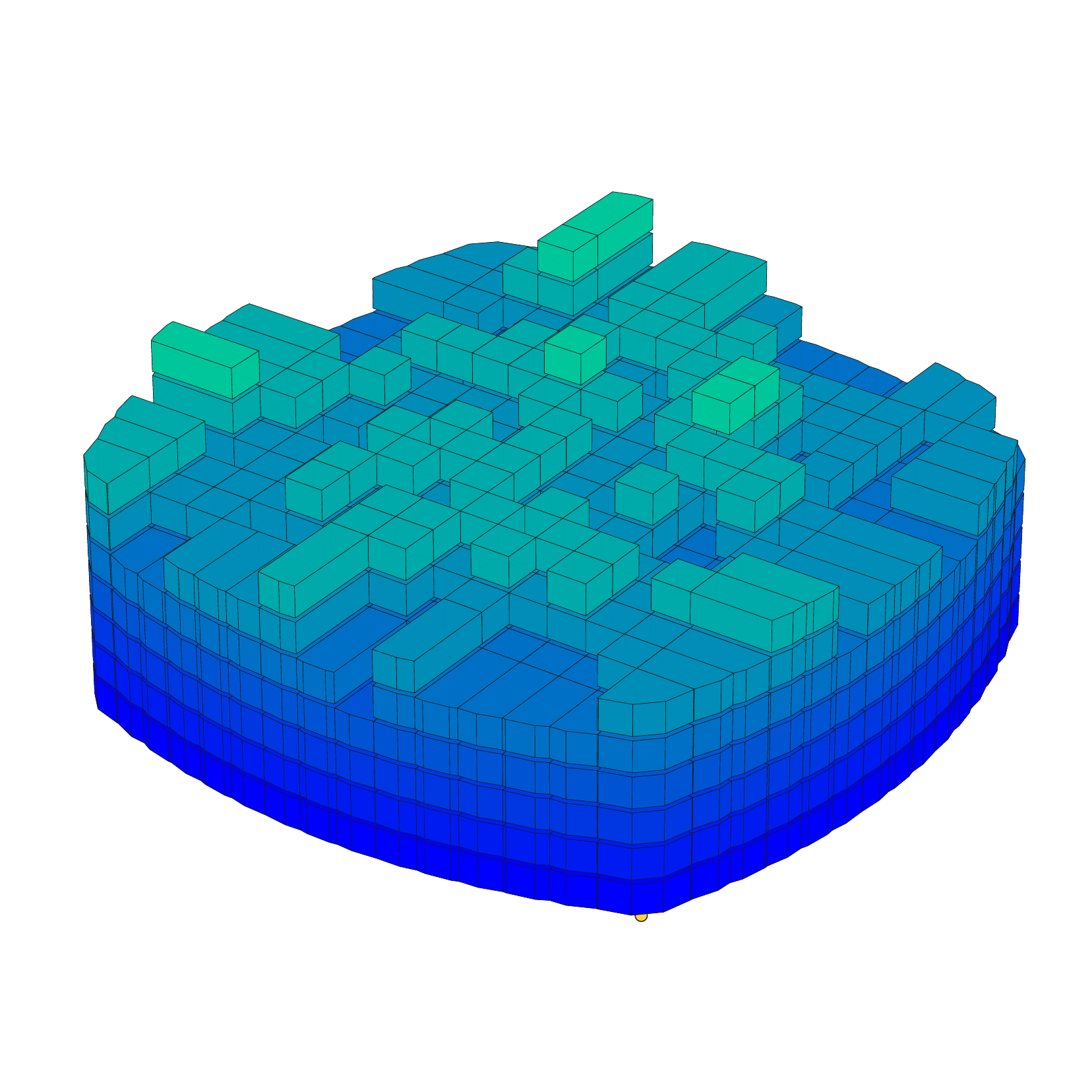}\hfill
	\includegraphics[width=0.31\textwidth]{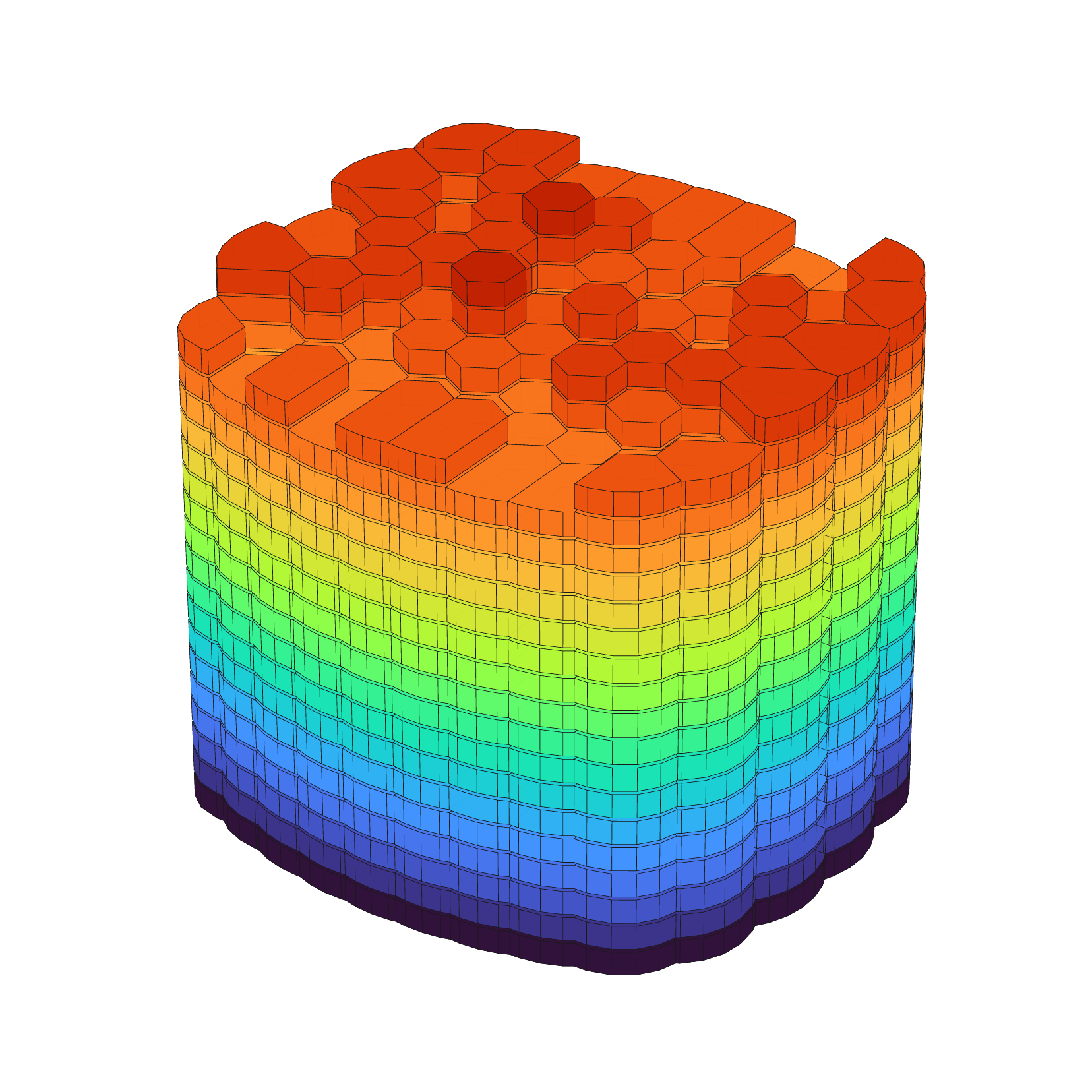}\hfill
	\includegraphics[width=0.31\textwidth]{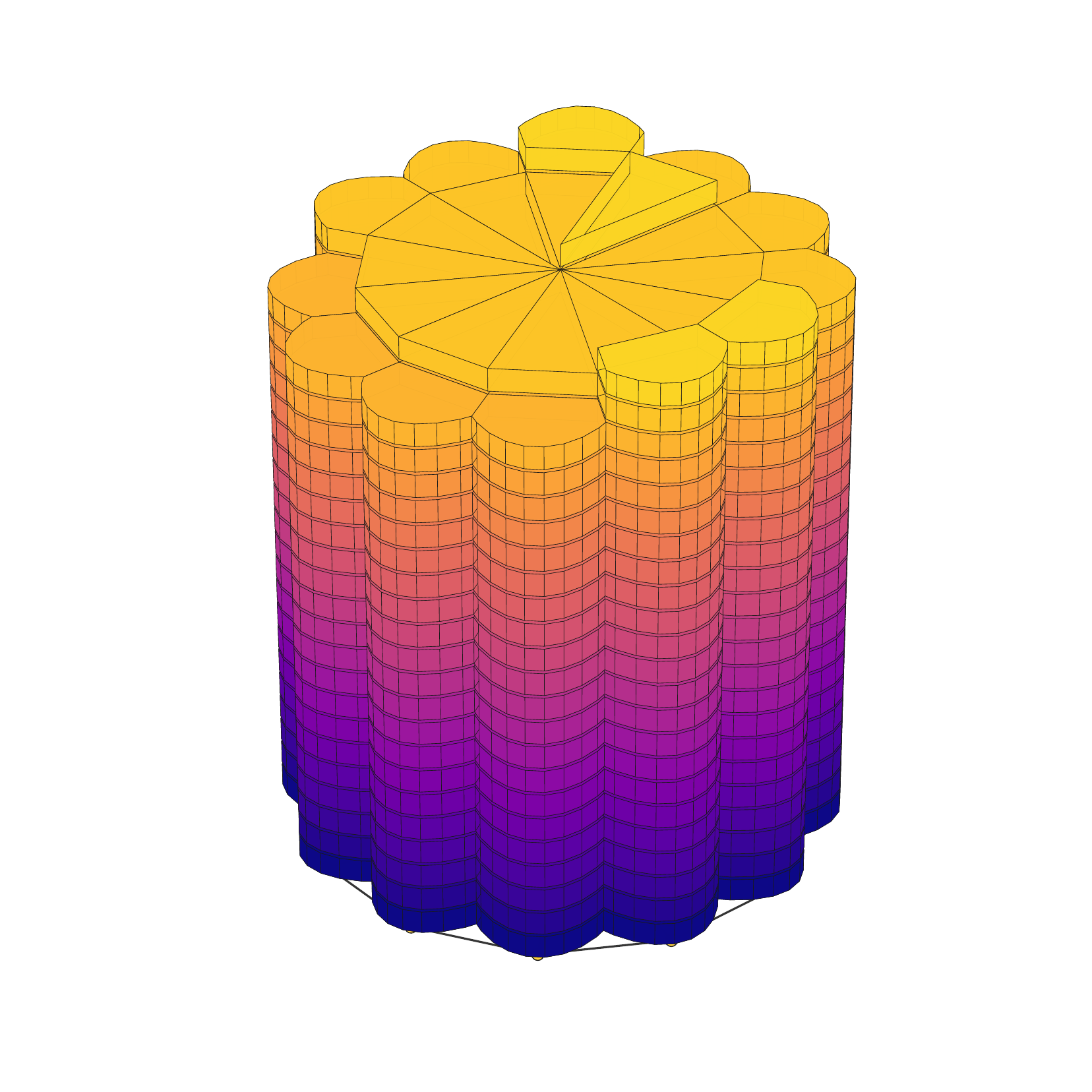}
	\caption{Three samples of the IDLA cluster $A(T)$ on the cylinder $\bbG_N = V_N \times \bbZ$ for different vertex-transitive base graphs $V_N$: the $15 \times 15$ square lattice torus ($N = 225$), the $9 \times 9$ triangular lattice torus ($N = 81$), and the Nauru graph $\mathrm{GP}(12,5)$ ($N = 24$). In each image, every site of the cluster is drawn as a prism whose cross-section is the Voronoi cell of its base vertex in the depicted embedding.}
	\label{fig:GZ}
\end{figure}

Our main result shows that the convergence to the GFF on cylinders of~\cite{jerison2014internal2} extends well beyond the cycle base. As the prototypical example, consider $V_N = \bbZ_n^d$ with $N = n^d$, equipped with the lazy simple random walk: the rescaled horizontal operator converges to $-\tfrac{1}{2d}\Delta_{\bbT^d}$ on the flat torus $\bbT^d = \bbR^d/\bbZ^d$, and we show, roughly, that the IDLA fluctuations converge to the GFF on the half-cylinder $\bbT^d \times \bbR_+$. Theorem~\ref{th:GFF} establishes this for every vertex-transitive base whose rescaled low spectrum converges and with vanishing spectral gap. Along the way we prove a strong  maximal-fluctuation bound, which in particular implies a shape theorem for all base graphs at arbitrary polynomial times (see Remark~\ref{rem:shape_theorem}). 

\subsection{Statement of results} \label{sec:statements}

For each integer $N \geq 2$, let $V_N = (V, E)$ be a finite, connected, vertex-transitive graph on $N$ vertices (so that $|V| = N$); slightly abusing notation, we also write $V_N$ for the vertex set $V$. Vertex-transitivity means that the automorphism group of $V_N$ acts transitively on the vertex set $V$; in particular, every vertex has a common degree $d = d(N) \geq 1$, allowed to depend on $N$. Write $\sim$ for the adjacency relation in $V_N$. Let $P_N$ be the transition matrix of the lazy simple random walk on $V_N$:
\begin{equation}\label{eq:PN}
P_N(x, x') \coloneqq \begin{cases}
1/(2d) & \text{if } x \sim x', \\
1/2 & \text{if } x = x', \\
0 & \text{otherwise.}
\end{cases}
\end{equation}
By symmetry of the adjacency relation, $P_N$ is symmetric, and a fortiori doubly stochastic and reversible with respect to the uniform distribution $\pi_N$ on $V_N$. Connectedness gives irreducibility, so the second-largest eigenvalue $\l_2(N)$ is strictly less than $1$; laziness gives that all eigenvalues of $P_N$ lie in $[0, 1]$.

We construct IDLA on the cylinder graph 
\[
\bbG_N \coloneqq V_N \times \bbZ = \{ (x,y) : x \in V_N,\; y \in \bbZ \}\,.
\]
We will often refer to $x$ and $y$ as the horizontal and vertical coordinate respectively. 

The cylinder walk on $\bbG_N$ is the discrete-time Markov chain with transition kernel
 	\begin{equation}\label{def:SRW}
 	 \cP_N\big( (x,y) ; (x',y') \big) =
 	\begin{cases}
 	\frac{1}{4} & \mbox{if } x=x' \mbox{ and } |y-y'|=1 \mbox{ (vertical step),}
 	\\[3pt]
 	\frac{1}{2}\, P_N(x,x') & \mbox{if } y=y' \mbox{ (horizontal step),}
 	\\[3pt]
 	0 & \mbox{otherwise,}
 	\end{cases}
 	\end{equation}
for each $(x,y)$ and $(x',y')$ in $\bbG_N$. The vertical coordinate has zero drift and makes a step with probability $1/2$, independently of the horizontal structure.

\medskip

For each real number $k$, let $R_k$ denote the infinite half-cylinder of height $k$ given by
	\[ R_k \coloneqq \{ (x,y) \in \bbG_N : y \leq k \}\,. \]
We define the IDLA process $(A(t))_{t \geq 0}$ on $\bbG_N$ starting from the flat configuration $R_0$ inductively. Set $A(0) \coloneqq R_0$. For each integer $t \geq 1$, set
	\[ A(t) \coloneqq A(t-1) \cup \{ (X_{t} , Y_{t} ) \}\,, \]
where $(X_{t} , Y_{t})$ is the exit location from $A(t-1)$ of an independent cylinder walk~\eqref{def:SRW} started at a site chosen uniformly from $V_N \times \{0\}$. Since $\pi_N$ is uniform, this procedure grows clusters according to the harmonic measure on the boundary seen from the layer $V_N \times \{0\}$. The aggregates are increasing, each contains the half-cylinder $R_0$, and setting $A_+(t) \coloneqq A(t) \cap \{ y \geq 1\}$ we have $|A_+(t)| = t$ for every $t \geq 0$, since each step aggregates exactly one site. Figure~\ref{fig:GZ} shows three examples.

The base graph being regular, it is natural to expect that $A(T)$ is close to $R_{T/N}$. 
Our first result is a bound on the maximal fluctuations of IDLA on the cylinder $\bbG_N$ from the rectangular shape $R_{T/N}$. Let $\t_{mix}$ denote the total variation mixing time of $P_N$ at threshold $1/4$, that is, the smallest integer $t \geq 0$ with $\max_{x \in V_N} \| P_N^t(x, \cdot) - \pi_N\|_{TV} \leq 1/4$.
Write $\log_+(s) \coloneqq \max(\log s, 0)$ for brevity. 
\begin{Theorem}\label{th:max_fluct}
Let $(A(t))_{t\geq 0}$ denote an IDLA process on the cylinder $\bbG_N$ starting from $A(0) = R_0$. For every $\n > 0$ and every $m \geq 1$ there exists a constant $C = C(\n, m) < \infty$ such that, with
	\[ T^\sharp \coloneqq N \sqrt{\t_{mix}} \,(\log N)^2\,, \qquad \Delta_N \coloneqq C \max\!\left\{\, \log N,\; \sqrt{\frac{T \wedge T^\sharp}{N}\,\Big( \log N + \log_+\!\big( T/N \big) \Big)} \,\right\}\,, \]
we have
	\begin{equation}\label{eq:fluct}
	\bbP \Big( R_{\lfloor T/N - \Delta_N \rfloor}
	\subseteq A(T) \subseteq R_{T/N +  \Delta_N } \Big) \geq 1-N^{-\n}
	\end{equation}
uniformly over integer $T$ with $1 \leq T \leq N^m$, for every $N$ large enough. 
\end{Theorem}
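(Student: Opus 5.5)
We follow the strategy of Jerison, Levine and Sheffield for the cylinder $\bbZ_N\times\bbZ$: show that early inner errors and early outer errors are unlikely. We phrase it through martingales built from discrete harmonic functions of the cylinder walk. The key tool is a family of test functions on $\bbG_N$ that are harmonic for $\cP_N$ away from a pole. Fix $z=(x_0,y_0)$ above the current profile. Let $H_z(x,y)$ be the expected number of visits to $z$ (suitably normalised) of a walk started at $(x,y)$ before it exits the half-space $\{y<y_0\}$. Equivalently, take the Poisson kernel of the half-cylinder $R_{y_0}$ seen from its top boundary. Harmonicity gives that $M_z(t)=\sum_{w\in A(t)\setminus R_0}H_z(w)-t\,\bar H_z(0)$ is a martingale, where $\bar H_z(0)$ is the average of $H_z$ over the source layer $V_N\times\{0\}$. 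Its quadratic variation is controlled by $\sum_t \max H_z^2$ over the relevant region. The constants are handled by the $\pi_N$-averages over each layer, which are exactly linear in $y$ because the vertical coordinate is a lazy simple random walk independent of the horizontal one. The size of the fluctuating part of $H_z$ at vertical distance $h$ from the pole is governed by $\max_x|P_N^{s}(x_0,x)-1/N|$ with $s\approx h^2$. This quantity decays like $N^{-1}$ times an exponential in $s/\t_{mix}$ once $s\gg\t_{mix}$. This is the source of the $\sqrt{\t_{mix}}$ in $T^\sharp$.

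First I would prove the easy crude bound: with probability $1-N^{-\n-m}$ there are no sites in $A(T)$ above height $T/N+C\sqrt{T/N\cdot\log N}$ and no holes below $T/N-C\sqrt{\cdots}$. I would obtain this from the martingale of the layer-averaged height function $\sum_{w}y(w)$ plus an Azuma-type bound. Alternatively I would compare the number of particles in each layer with independent walks, as in Lawler–Bramson–Griffeath, using that the exit height of a single walk from a flat cluster has Gaussian-type tails. This already gives the $\sqrt{(T/N)\log}$ regime for $T\le T^\sharp$. For $T\ge T^\sharp$, the point is that fluctuations stop growing. A column cannot lag behind its neighbours by more than the mixing scale, because horizontal mixing over the time a walk spends crossing $\sqrt{\t_{mix}}\log N$ layers equalises the harmonic measure. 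So I would run the argument on time windows of length $T^\sharp$, restarting from a configuration that is already within $\Delta_N$ of flat. The restart would use the abelian property and monotonicity: compare with IDLA started from $R_{\lfloor T/N-\Delta\rfloor}$ and with IDLA started from $R_{T/N+\Delta}$ with extra particles. The union bound over $T\le N^m$ and over the pole locations $z$ (polynomially many in the relevant window) costs only the $\log N+\log_+(T/N)$ factor.

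The core step is the JLS bootstrap. Suppose there is an outer error of depth $\ell$ at $z$ at time $T$, i.e.\ $z\in A(T)$ with $y_0\ge T/N+\ell$. Suppose also that no inner error of depth $\ge m'$ (smaller) has occurred. Then the martingale $M_z$ evaluated at the hitting time of $z$ is atypically large, of order $\ell^2$ relative to its variance, and hence has probability at most $e^{-c\ell^2/(\text{var})}$. Symmetrically, an inner error (a hole at $z$ at depth $\ell$) forces $M_z$ to be atypically small, given no large outer errors. Iterating the two implications shrinks the error from the crude bound to $C\log N$, or to the $\sqrt{(T\wedge T^\sharp)/N\cdot\log}$ term. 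I expect the main obstacle to be the variance estimate for $M_z$ uniformly over vertex-transitive bases, namely bounding $\sum_{w}H_z(w)^2$ over layers using only $\t_{mix}$ and transitivity. For this I would write $H_z$ spectrally as a sum over eigenvalues of $P_N$, combined with the one-dimensional Green function in $y$. I would split into the $\pi_N$-component, which is linear and deterministic, and the remainder, bounded via $\|P_N^s(x_0,\cdot)-\pi_N\|_{2}^2=P_N^{2s}(x_0,x_0)-1/N$, which transitivity lets me control without geometric input. Doing this also requires the local-CLT-type estimate near the pole to give the $\log N$ floor.
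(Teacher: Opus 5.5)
Your proposal has a genuine gap in the regime $T\ge T^\sharp$. For $T\le T^\sharp$ it also leaves the needed steps unproved, while aiming at a difficulty that is not actually there.

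\emph{The regime $T\ge T^\sharp$.} Restarting by monotonicity cannot work. Suppose at time $T_1$ you know $R_{\lfloor T_1/N-\Delta\rfloor}\subseteq A(T_1)\subseteq R_{T_1/N+\Delta}$. Comparing with IDLA run from these two flat configurations (as in Lemma~\ref{le:mono-init}) gives at time $T_1+s$ only the width $\Delta+\Delta(s)$. The lower comparison discards the roughly $N\Delta$ sites of $A(T_1)$ above the inner envelope. The upper one adds roughly $N\Delta$ fictitious sites, and adding extra particles only enlarges the upper cluster further. So your induction over windows of length $T^\sharp$ does not close: widths add, and you never beat the crude $\sqrt{(T/N)\log N}$. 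One such comparison is harmless (the paper uses one inside the outer bound), but iterating it is fatal.

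What you need is a contraction, i.e.\ an identity rather than a sandwich. The paper's Proposition~\ref{pr:reduction}, via Silvestri's layered-release coupling, gives a fresh IDLA $A_*$ from $R_0$ and a reduced time $T'\le T^\sharp$ such that, with probability $1-N^{-(\nu+1)}$, $A(T)$ equals $A_*(T')$ shifted up by $k=(T-T')/N$. That coupling uses buffers of order $\sqrt{\tau_{mix}}\log N$ layers, across which particles mix horizontally, so the cluster forgets its initial condition. This is the only place $\tau_{mix}$ enters; it does not come from the variance of $M_z$. Your fallback, a JLS bootstrap shrinking errors to $C\log N$, needs lower bounds on $H_z$ near the pole and thin-tentacle control. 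These depend on the local geometry of $V_N$, are not available uniformly over vertex-transitive bases, and you do not supply them.

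\emph{The regime $T\le T^\sharp$.} The ``layer-averaged height'' martingale does not exist. The function $y$ is harmonic, but optional stopping fails: the walker can wander arbitrarily deep into $R_0$ and always exits at height $\ge 1$. Hence the increments $Y_{t+1}$ have a positive, cluster-dependent conditional mean. In any case, one scalar could not exclude an isolated hole or spike.

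The Poisson kernel $H_z$ is the right function, with two caveats:
\begin{itemize}
\item $M_z$ is a martingale only for the process in which particles are halted on reaching level $y_0$. This is the paper's $\bar A_h$, transferred back to $A(T)$ via the monotone coupling \eqref{eq:monotone_stop}.
\item Bounding the quadratic variation by $\sum_t\max H_z^2$ is useless, since $H_z(z)=1$ gives $V(T)\approx T$.
\end{itemize}
No spectral analysis, local CLT or bootstrap is needed. Write $\xi_t=H_z(\bar X_t,\bar Y_t)-1/N$. Since $0\le H_z\le 1$ we have $H_z^2\le H_z$. A walk started uniformly on layer $0$ first hits layer $y_0$ uniformly, so $\bbE[H_z(\bar X_t,\bar Y_t)\mid\cF_{t-1}]=1/N$. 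Together these give $\bbE[\xi_t^2\mid\cF_{t-1}]\le 1/N$, hence $V(T)\le T/N$. Freedman's inequality plus a union bound over the $Nh\le T$ sites then gives the inner bound $\sqrt{(T/N)L_T}$ for every $T\ge K_0N\log N$.

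The outer bound is not obtained symmetrically. On the event that $R_{h-1}$ is filled, the paper uses the alive/ghost identity $N_x=M_x-\widetilde L_x$. Both terms are sums of independent indicators, with means $T/N$ and $h-1$, so each column has at most $O(\ell_*)$ frozen particles. It then releases $m=O(\ell_*)$ particles per column from $R_h$ and controls the resulting height by the a priori binomial bound of Proposition~\ref{pr:apriori}. The $\log N$ floor likewise comes from Proposition~\ref{pr:apriori}(a) for $T\lesssim N\log N$, not from a local estimate near the pole.
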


\begin{Remark}\label{rem:shape_theorem}
Theorem~\ref{th:max_fluct} proves a shape theorem for IDLA on the cylinder graph $\bbG_N$ for any vertex-transitive base graph $V_N$. Indeed, for a lazy simple random walk on $V_N$ it is well known that 
in vertex transitive graphs
$\tau_{mix} = \mathcal{O} ( N^2)$~\cite[Corollary~13.24]{levin2017markov}, which implies $T^\sharp \leq (N \log N )^2$ from which
	\[ \Delta_N  \leq C N^{1/2} ( \log N )^{3/2} = o(N)  \]
for $N$ large enough.
\end{Remark}

Theorem~\ref{th:max_fluct} improves on the fluctuation bound obtained by Silvestri in~\cite{silvestri2020internal}, who proved that a similar statement holds\footnote{For a random walk on a regular graph, $(\t_{rel} - 1) \log 2 \leq \t_{mix} \leq \t_{rel} \log(4N)$ (see~\cite{levin2017markov}, Theorems~12.3 and~12.4).} with $\Delta_N = C \sqrt{\t_{mix}} (\log N)^2$ uniformly for $T \leq N^m$ (see~\cite{silvestri2020internal}, Theorem~4). 
Silvestri's bound relies on coupling arguments which require the random walk trajectories to mix before exiting the cluster, and it therefore cannot be improved. Instead, we prove a tighter maximal fluctuations bound from scratch by combining Freedman's martingale inequality~\cite{freedman1975tail} for the inner bound with the alive/ghost decomposition of Lawler, Bramson and Griffeath~\cite{lawler1992internal} for the outer bound.

\begin{Remark}
	For clarity of exposition we restrict throughout to vertex-transitive bases with the lazy simple random walk. We expect the maximal fluctuation result, Theorem~\ref{th:max_fluct} above, to extend to any cylinder with a doubly stochastic horizontal kernel, since the shape-theorem proof rests only on uniform-measure preservation. The GFF result, Theorem~\ref{th:GFF}, on the other hand, makes essential use of the horizontal automorphism invariance of the IDLA law so the natural extension is a kernel $P_N$ that is symmetric, lazy, irreducible, and admits a kernel-preserving group of automorphisms acting transitively on $V_N$. It would also be interesting to extend to non-homogeneous discretizations of compact Riemannian manifolds. We do not pursue these generalizations here.
\end{Remark}

\medskip

Once a good maximal fluctuation bound is in place, one can study the average fluctuations of IDLA clusters around the rectangular shape. Under our standing assumption that $V_N$ is vertex-transitive and $P_N$ is the lazy simple random walk, $P_N$ has $N$ real eigenvalues
	\begin{equation}\label{eq:eigenvalues}
	 1 = \l_1(N) > \l_2(N) \geq \cdots \geq \l_N(N) \geq 0
	\end{equation}
where strict inequality $\l_2(N) < 1$ comes from irreducibility and the lower bound $\l_N(N) \geq 0$ comes from laziness. Denote the  corresponding real eigenvectors by $\{ f^N_j : 1\leq j \leq N\}$. The eigenvectors can be taken to be an orthonormal basis of $\bbR^{V_N}$ with respect to the {\em normalised} counting-measure inner product
	\begin{equation}\label{eq:product}
	 \langle f , g \rangle \coloneqq \frac 1N \sum_{x \in V_N } f(x) g(x)\,.
	 \end{equation}
The spectral gap is $1 - \l_2(N)$, and the reciprocal
	\[ \t_{rel} \coloneqq \frac{1}{1-\l_2(N)} \]
is referred to as the relaxation time.

We formulate our spectral assumption in terms of the rescaled operator
	\begin{equation}\label{def:LN}
	 L_N \coloneqq 2a_N^2 (I - P_N)\,,
	\end{equation}
where $a_N$ is a sequence of positive reals (with $a_N \to \infty$) chosen so that the second eigenvalue of $L_N$ converges to a finite positive limit, i.e.\
	\begin{equation}\label{eq:aN}
	a_N \sqrt{2(1-\l_2(N))} \to \g_2 \in (0, \infty)
	\end{equation}
as $N\to\infty$. Such an $a_N$ exists whenever the spectral gap vanishes, and is then determined up to a multiplicative constant; the value of $\gamma_2$ encodes the choice of normalization. From~\eqref{eq:aN}, $a_N \asymp \sqrt{\t_{rel}}$ as $N\to\infty$. We will impose the additional assumption $a_N \gg \log N + \log_+ a_N$.

\begin{Assumption}\label{a:spectral}
For each fixed integer $k \geq 2$, the $k$-th eigenvalue $2a_N^2(1 - \l_k(N))$ of $L_N$ converges to a finite positive limit $\g_k^2 \in (0, +\infty)$. Equivalently,
	\[ \lim_{N\to\infty} a_N \sqrt{2(1-\l_k(N))} = \g_k \in (0, +\infty)\,. \]
\end{Assumption}
By monotonicity of the gaps, the sequence $(\gamma_k )_{k\geq 2}$ is non-decreasing, with $\gamma_2 >0$ by definition of $a_N$.

\begin{Remark}
Assumption~\ref{a:spectral} is a low-spectrum convergence condition on the rescaled horizontal operator $L_N$. When $V_N$ is a discretization of a compact Riemannian manifold $M$ and $P_N$ approximates the heat semigroup on $M$ at scale $a_N$, the eigenvalues $\g_k^2$ are exactly the eigenvalues of $-\Delta_M$ (up to overall normalization), and Assumption~\ref{a:spectral} reduces to spectral convergence of graph Laplacians.
\end{Remark}

Fix an arbitrary $y_0 \in (0,\infty )$ (this will be the limiting rescaled height of the cluster, the actual lattice height being $T/N \sim y_0 a_N$), and set
	\[ T = N \lfloor y_0 a_N \rfloor\,. \]
We will show that, for each fixed finite-mode test function, the average fluctuations of $A(T)$, appropriately rescaled, converge to a centred Gaussian random variable in the horizontal coordinate.

Write $\Omega_N$ for the (countable) state space of the IDLA process on $V_N \times \bbZ$, and recall that $R_k \in \Omega_N$ for each integer $k \geq 0$. By the growth rules, $A(0) = R_0$ and $A(t) \supset R_0$ for all $t \geq 1$. Recall that we write
	\[ A_+(t) \coloneqq A(t) \cap \{ y\geq 1\} \]
for the part of $A(t)$ strictly above level zero, so that $| A_+ (t) | = t$ for all $t\geq 0$. 
For each $A \in \Omega_N$ define the indicator function
$\mathbf{1}_A : V_N \times \bbZ \to \{ 0,1\} $ by setting 
	\[ \mathbf{1}_A(x,y) = \begin{cases}
	1 & \mbox{if } (x,y) \in A, \\
	0 & \mbox{otherwise.}
	\end{cases} \]

\begin{Definition}[Discrepancy function]
For $t \geq 0$, the discrepancy function $D_t : V_N \times \bbZ \to \{-1, 0, +1\}$ is defined by
	\[ D_t(x, y) \coloneqq \mathsf{1}_{A(t)}(x, y) - \mathsf{1}_{R_{t/N}}(x, y)\,. \]
\end{Definition}
By Theorem~\ref{th:max_fluct}, $A(T) \approx R_{T/N}$ for large $N$, so $D_T$ measures, with sign, the deviation of the cluster from the limiting rectangular shape. The symmetric difference $A(t)\triangle R_{t/N}$ is always finite, so the natural pairing
	\[ \laa D_t, f \raa \coloneqq \sum_{(x,y) \in V_N \times \bbZ} D_t(x, y)\, f(x, y) \]
makes sense as an absolutely convergent finite sum for every function $f : V_N \times \bbZ \to \bbR$. In the proofs we frequently use this pairing with test functions $f$ that satisfy the following property.

\begin{Definition}\label{def:zero}
A function $f : V_N \times \bbZ \to \bbR$ has the zero horizontal average property if
	\begin{equation}\label{eq:zero_average}
	\sum_{x \in V_N} f(x, y) = 0 \qquad \text{for every } y \in \bbZ\,.
	\end{equation}
\end{Definition}

When $f$ has the zero horizontal average property, every complete horizontal layer contributes zero to $\laa D_t, f\raa$, and the pairing reduces to a (signed) sum over the (finitely many) layers in the symmetric difference of $A(t)$ and $R_{t/N}$:
	\[ \laa D_t, f\raa = \sum_{(x,y) \in A_+(t)} f(x,y) 
= \sum_{(x,y) \in A_+(t) \setminus R_{t/N} } f(x,y) - \sum_{(x,y) \in R_{t/N} \setminus A_+(t)} f(x,y) 
	. \]

\begin{Remark}
Note that if $f$ is of the form 
	\[ f (x,y) = \sum_{k=2}^N \alpha_k (y) f_k^N(x) \] 
then it satisfies the zero horizontal average property \eqref{eq:zero_average}, 
because of the orthogonality of $(f^N_k)_k$ and the fact that $f_1^N \equiv 1 $ is the constant function.
\end{Remark}

Recall that, under Assumption~\ref{a:spectral}, the rescaled gap limits $\g_k = \lim a_N\sqrt{2(1-\l_k(N))}$ are finite and $0 < \g_2 \leq \g_3 \leq \cdots$. 
Our main result is the following. 
\begin{Theorem}\label{th:GFF}
Fix $y_0 \in (0, \infty)$. Let $\phi = \phi_N : V_N \times \bbR \to \bbR$ be a test function of the form
	\[ \phi (x,y) = \sum_{k=2}^K \alpha_k(y) f_k^N(x) \]
for some finite integer $K \geq 2$ independent of $N$, where each coefficient $\alpha_k$ does not depend on $N$ and is differentiable on a neighbourhood of $y_0$ with bounded derivative there.
Let $\varphi = \varphi_N : V_N \times \bbZ \to \bbR$ be the extension
	\[ \varphi (x,y) \coloneqq \phi \Big( x , \frac{y}{a_N} \Big)\,, \qquad (x,y) \in V_N \times \bbZ\,, \]
with $a_N$ as in~\eqref{eq:aN}. Assume that Assumption~\ref{a:spectral} holds for each $2 \leq k \leq K$, and that
	\[ a_N \gg \log N + \log_+ a_N \qquad \text{as } N \to \infty\,. \]
Let
	\[ \sigma^2(\phi) \coloneqq \sum_{k=2}^K \alpha_k(y_0)^2 \, \frac{1 - e^{-2\g_k y_0}}{2\g_k} \,\geq\, 0\,. \]
Then, with $T = N \lfloor y_0 a_N \rfloor$,
	\begin{equation}\label{eq:thGFF}
	 \frac{1}{\sqrt{Na_N}} \laa D_T , \varphi \raa \Rightarrow
	\mathcal{N} \big( 0\,,\, \sigma^2(\phi) \big)
	\end{equation}
as $N\to\infty$. 

Here $\Rightarrow$ denotes convergence in distribution, and $\mathcal{N} (0, \sigma^2)$ denotes a normal distribution with $0$ mean and variance $\sigma^2$.
\end{Theorem}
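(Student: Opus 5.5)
We follow the martingale strategy of Jerison, Levine and Sheffield for the cycle base. For each mode $k$ we build a discrete harmonic function on $\bbG_N$ that matches the time-$T$ test function at the final height, and apply the martingale central limit theorem to the resulting martingale. Because $\varphi$ is a finite sum of modes, by linearity (and a Cramér--Wold argument using the joint martingale) we may treat $\varphi(x,y)=\alpha_k(y/a_N) f^N_k(x)$ for a single $k$.

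\textbf{Step 1: the harmonic functions.} Harmonicity for $\cP_N$ means $\frac14(h(x,y+1)+h(x,y-1)-2h(x,y)) = \frac12\bigl((I-P_N)h\bigr)(x,y)$. Taking $h_k(x,y)=e^{\beta_k y} f^N_k(x)$ reduces this to $\cosh\beta_k-1=1-\l_k(N)$. By Assumption~\ref{a:spectral} this gives $a_N\beta_k\to\g_k$. Put $M_k(t)\coloneqq \laa \mathbf{1}_{A_+(t)}, h_k\raa=\sum_{z\in A_+(t)}h_k(z)$.

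\textbf{Step 2: the martingale property.} Each particle starts uniformly on $V_N\times\{0\}$, where $h_k$ averages to zero. By optional stopping at the exit time from $A(t-1)$, we get $\bbE[M_k(t)-M_k(t-1)\mid\cF_{t-1}]=0$.
\begin{itemize}
\item Optional stopping needs a truncation, since $h_k$ grows in $y$ and the walk can go down into $R_0$. Downward excursions cost $e^{-\beta_k|y|}$, which is bounded, and the exit time is a.s.\ finite with upward moves controlled by Theorem~\ref{th:max_fluct}.
\item For the pairing we have $\laa D_T,h_k\raa=M_k(T)$, since complete layers contribute zero.
\end{itemize}

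\textbf{Step 3: reducing $\laa D_T,\varphi\raa$ to the martingale.} On the event of Theorem~\ref{th:max_fluct}, the symmetric difference $A(T)\triangle R_{T/N}$ lies in the layers $|y-T/N|\le\Delta_N$.
\begin{itemize}
\item With $T^\sharp$ and $\t_{mix}\lesssim a_N^2\log N$, we get $\Delta_N\lesssim \sqrt{a_N}\,\mathrm{polylog}(N)$.
\item On those layers, $\varphi(x,y)=\alpha_k(y_0)f_k^N(x)+O(\Delta_N/a_N)$ using the bounded derivative of $\alpha_k$. Likewise $h_k(x,y)=e^{\beta_k T/N}f_k^N(x)(1+O(\beta_k\Delta_N))$.
\item Hence $\laa D_T,\varphi\raa=\alpha_k(y_0)e^{-\beta_kT/N}M_k(T)+\text{error}$. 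The error is at most $|A\triangle R|\cdot\|f_k\|_\infty\cdot\Delta_N/a_N$.
\end{itemize}
To make this error $o(\sqrt{Na_N})$ we need control of $\|f^N_k\|_\infty$. Vertex-transitivity gives this: the projection onto the $\l_k$-eigenspace has constant diagonal, so $|f_k(x)|^2\le$ the multiplicity, which is bounded since the $\g_j$ are finite. We also use the second-order cancellation, writing the error as $\laa D_T,\cdot\raa$ against a function with zero horizontal average. The hypothesis $a_N\gg\log N+\log_+a_N$ is what makes these polylog losses vanish.

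\textbf{Step 4: martingale CLT and the main obstacle.} The remaining task is the martingale CLT for $e^{-\beta_kT/N}M_k(T)/\sqrt{Na_N}$. We must show that the sum of conditional variances converges in probability to $(1-e^{-2\g_ky_0})/(2\g_k)$, and that a Lindeberg condition holds.
\begin{itemize}
\item Heuristically, at time $t$ the cluster is nearly flat at height $t/N$. The new site's horizontal location is nearly uniform, so the conditional variance of an increment is about $e^{2\beta_k t/N}\langle f_k,f_k\rangle= e^{2\beta_kt/N}$.
\item Summing gives $N\sum_{j\le T/N}e^{2\beta_k j}\approx N a_N (e^{2\g_k y_0}-1)/(2\g_k)$. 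Multiplying by $e^{-2\beta_kT/N}$ yields $\sigma^2$.
\end{itemize}

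The main obstacle is making "nearly uniform" rigorous: the square of the increment is not harmonic. I would write $h_k^2$ via its horizontal mean plus modes, using $f_k^2=1+\sum_j c_jf_j$. The constant part is $e^{2\beta_ky}$; it is subharmonic, and its cluster sum is nearly deterministic by Theorem~\ref{th:max_fluct}. For each nonconstant mode $j$ of $f_k^2$, I would compare $e^{2\beta_k y}f_j(x)$ with a harmonic function and bound the cluster sum of the difference, which has a small Laplacian, via the maximal fluctuation bound. The main care is in the modes $j$ where $\l_j$ is not in the low spectrum.

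Lindeberg should follow from increments being bounded by $\|h_k\|_\infty$ on the fluctuation event, which is $O(e^{\beta_kT/N})$, i.e.\ $o(\sqrt{Na_N})$ after normalisation. The downward-excursion contributions have exponentially small tails.
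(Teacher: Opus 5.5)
\textbf{The genuine gap is in Step~3}, where you replace $\varphi$ by the harmonic function. The only control on the misplaced set that Theorem~\ref{th:max_fluct} gives you is $|A(T)\triangle R_{T/N}|\le N(2\Delta_N+1)$. Your error bound $|A(T)\triangle R_{T/N}|\cdot\|f^N_k\|_\infty\cdot\Delta_N/a_N$ is therefore of order $N\Delta_N^2/a_N\asymp N(\log N+\log_+a_N)$. This is never $o(\sqrt{Na_N})$, because $a_N\asymp\sqrt{\t_{rel}}=O(N)$; on the cycle it reads $N\log N$ against $N$.

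Invoking ``second-order cancellation'' does not repair this. Zero horizontal average only kills \emph{complete} layers. On the $O(\Delta_N)$ partially filled layers, nothing deterministic prevents the cluster from correlating with the sign of $\Phi_N=(\varphi-\psi)\mathbf 1_{F_N}$.

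What is missing is an a priori \emph{variance} bound for cluster sums $\sum_{z\in A_+(T)}\Phi(z)$ of zero-mean test functions, in terms of their layerwise $L^2$ norms. The paper proves exactly this in Lemma~\ref{le:replace} via Efron--Stein. By the abelian property, resampling one walk changes $A_+(T)$ in at most two sites (Proposition~\ref{pr:exchange}). Horizontal automorphism invariance makes $\bbP((x,y)\text{ is one of them})$ depend only on $y$. Orthonormality then bounds the variance by $2T\sup_y\frac1N\sum_x|\Phi_N(x,y)|^2\le CT(\Delta_N/a_N)^2$, and the same invariance makes the mean exactly zero. After dividing by $Na_N$ this is $O((\log N+\log_+a_N)/a_N)\to0$, which is where the hypothesis on $a_N$ really enters. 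This route also makes $\|f^N_k\|_\infty$ irrelevant. That matters because your bounded-multiplicity claim does not follow from Assumption~\ref{a:spectral}, which only constrains $k\le K$.

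\textbf{Step~4 is also incomplete, but the obstacle you name there is avoidable.} Using the conditional-variance form of the martingale CLT forces you to understand where the next particle lands, hence the mode-by-mode comparison for $f_k^2$. It is unclear how to carry this out for the high modes $j$ you flag, since $e^{2\b_k y}f_j^N$ is then far from harmonic.

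The paper instead uses McLeish's version (Theorem~\ref{thm:martingale-CLT}), whose condition~(c) is on $\sum_k\cX_{k,N}^2$. Because the harmonic function $\psi$ is fixed in time, this sum equals $Q_N=\frac1{Na_N}\sum_{z\in A_+(T)}\psi(z)^2$, a functional of the final cluster alone. By orthonormality, each complete layer contributes exactly $N\sum_k\a_k(\tfrac{T}{Na_N})^2e^{2q^N_k(y-T/N)/a_N}$. Since $\psi^2\ge0$, the layers with $|y-T/N|\le\Delta_N$ change the sum by at most $O(\Delta_N/a_N)$ on the good event (Lemmas~\ref{clm:Q-W} and~\ref{clm:W to sigma}). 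No information on the horizontal law of individual particles is needed.

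The price of this version is condition~(a). The paper secures it by $L^1$ control of $Q_N$ off the good event, using Proposition~\ref{pr:apriori}(b) on a ladder of heights (Lemma~\ref{clm:Q on Ec}).
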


When the base discretizations converge spectrally to a compact manifold $M$, the variance formula in Theorem~\ref{th:GFF} takes a clean continuum form; this connection is discussed in the next subsection.

\subsection{Continuum interpretation: half-cylinder GFF and the fractional Gaussian field}\label{sec:FGF}

When $V_N = \bbZ_n^d$, the scalar CLT of Theorem~\ref{th:GFF} has variance matching the slice at height $y_0$ of the GFF on the half-cylinder $\bbT^d \times \bbR_+$ (modulo a constant rescaling, discussed below, that reflects the laziness of the kernel). The same mode-by-mode computation extends to a general compact Riemannian manifold $M$. As $y_0 \to \infty$, the slice covariance converges to one half of the covariance of the fractional Gaussian field (FGF) of order $s = 1/2$ on the base; we refer to~\cite{lodhia2014fractional} for a survey on the FGF.

Recall that $I - P_N$ has eigenvalues $\n_k(N) = 1 - \l_k(N)$ and the same orthonormal eigenfunctions $\{f_k^N\}$.
For $s > 0$, the FGF of order $s$ on $V_N$ is the centered Gaussian field on the mean-zero subspace $L^2_0(V_N) \coloneqq \{f \in \bbR^{V_N} : \sum_{x} f(x) = 0\}$ with covariance operator given by the inverse of $(I - P_N)^s$ restricted to $L^2_0(V_N)$: for each zero-mean test function $\phi = \sum_{k=2}^K \widehat\phi_k \, f_k^N$ on $V_N$,
	\begin{equation}\label{eq:FGFvar}
	 \bbE \big[ \langle h_N , \phi \rangle^2 \big]
	= \sum_{k=2}^K \widehat\phi_k^2 \, \n_k(N)^{-s}\,.
	\end{equation}
The case $s = 1$ is the discrete Gaussian free field (GFF), and the variance in eigenmode $k$ is an inverse power $\n_k(N)^{-s}$ of the Laplacian eigenvalue.

Consider a compact Riemannian manifold $M$ with Laplace--Beltrami operator $-\Delta_M$, and let $\kappa > 0$ be a positive constant. Write $0 < \m_2 \leq \m_3 \leq \cdots$ for the eigenvalues of $\kappa(-\Delta_M)$ restricted to $L^2_0(M)$, and let $\{\phi_k\}_{k\geq 2}$ be the corresponding eigenfunctions. After projection onto the mean-zero horizontal sector, the GFF on the half-cylinder $M \times \bbR_+$ with Dirichlet boundary at $y = 0$ and horizontal generator $\kappa(-\Delta_M)$ decomposes as $h_0(x, y) = \sum_{k\geq 2} \a_k(y)\phi_k(x)$, where the Dirichlet energy separates into independent modes:
	\[ \|h\|_\nabla^2 = \sum_{k\geq 2} \int_0^\infty \big[ |\a_k'(y)|^2 + \m_k |\a_k(y)|^2 \big]\, dy\,. \]
Each $\a_k$ is therefore an Ornstein--Uhlenbeck process with mean-reversion rate $\sqrt{\m_k}$, started from zero (see~\cite{sheffield2007gaussian} and~\cite{jerison2014internal2}). Its variance at height $y_0$ is the Green's function $G_k(y_0, y_0)$ for the operator $-\de_y^2 + \m_k$ on $(0, \infty)$ with Dirichlet boundary at $y = 0$. The method of images gives
	\[ G_k(y, y') = \frac{1}{2\sqrt{\m_k}} \Big( e^{-\sqrt{\m_k}\, |y - y'|} - e^{-\sqrt{\m_k}(y + y')} \Big)\,, \]
and setting $y = y' = y_0$ yields
	\begin{equation}\label{eq:GFFrestriction}
	 \var(\a_k(y_0)) = \frac{1 - e^{-2\sqrt{\m_k}\, y_0}}{2\sqrt{\m_k}}\,.
	\end{equation}
At finite $y_0$, the variance~\eqref{eq:GFFrestriction} is the slice covariance of the half-cylinder GFF; as $y_0 \to \infty$ the exponential correction $e^{-2\sqrt{\m_k}\,y_0}$ vanishes and the variance approaches $\tfrac{1}{2}\m_k^{-1/2}$, which is $1/2$ times the per-mode variance of the order-$1/2$ FGF on $M$ with generator $\kappa(-\Delta_M)$.  The exponential correction $e^{-2\sqrt{\m_k}\,y_0}$ encodes the memory of the flat initial configuration at $y = 0$.

Under Assumption~\ref{a:spectral}, the rescaled spectral gaps satisfy $a_N\sqrt{2\n_k(N)} \to \g_k$ as $N \to \infty$, where $\n_k(N) = 1 - \l_k(N)$. The factor of $2$ in the scaling reflects the cylinder kernel~\eqref{def:SRW}, which makes a horizontal step with probability $1/2$ rather than $1$: the effective horizontal generator at scale $a_N$ is $L_N = 2a_N^2(I - P_N)$. On a discretization of a compact Riemannian manifold $M$, the eigenvalues of $L_N$ converge to those of $\kappa(-\Delta_M)$ for a proportionality constant $\kappa$ determined by the geometry and normalization. Writing Assumption~\ref{a:spectral} in the manifold form
	\begin{equation}\label{eq:specconv}
	 2a_N^2(1 - \l_k(N)) \to \m_k \qquad \text{for each fixed } 2 \leq k \leq K\,,
	\end{equation}
we identify $\m_k = \g_k^2$ with the $k$-th eigenvalue of $\kappa(-\Delta_M)$, so that the variance factor $(1 - e^{-2\g_k y_0})/(2\g_k)$ in~\eqref{eq:thGFF} matches~\eqref{eq:GFFrestriction} exactly. For instance, when $V_N = \bbZ_n^d$ with lazy simple random walk and $a_N = n$, a direct computation gives $\kappa = 1/(2d)$, so the limit operator is $-(1/(2d))\Delta_{\bbT^d}$ in the standard flat metric on $\bbT^d = \bbR^d/\bbZ^d$; the cycle case $d = 1$ gives $\kappa = 1/2$. When $V_N = \bbZ_N$ and $a_N = N$, we recover the finite-dimensional content of the result of Jerison, Levine and Sheffield~\cite{jerison2014internal2}.

For the manifold identification we assume a consistent choice of orthonormal basis $\{f_k^N\}$ inside each multiplicity block, identifying $f_k^N$ with the corresponding continuum eigenfunction $\phi_k$ of $\kappa(-\Delta_M)$. (For the torus examples below the Fourier basis gives a canonical such choice.) Applying Theorem~\ref{th:GFF} to the test function $\varphi_g(x, y) \coloneqq \sum_{k=2}^K c_k f_k^N(x)$, independent of $y$ so that $\phi_g = \varphi_g$, with $g = \sum_{k=2}^K c_k \phi_k$ a finite linear combination of mean-zero Laplace eigenfunctions on $M$, yields
	\begin{equation}\label{eq:cormanifold}
	\frac{1}{\sqrt{Na_N}} \laa D_T, \varphi_g \raa
	\;\Rightarrow\; \cN\!\left( 0\,, \sum_{k=2}^K c_k^2\, \frac{1 - e^{-2\sqrt{\m_k}\, y_0}}{2\sqrt{\m_k}} \right)\,,
	\end{equation}
which is the variance of $\langle g, h_0(\cdot, y_0)\rangle$ for the half-cylinder GFF $h_0$ on $M \times \bbR_+$ with horizontal generator $\kappa(-\Delta_M)$.

The convergence can be upgraded to a field-valued one if a uniform second-moment hypothesis on the full spectrum is satisfied. This hypothesis can be checked on cycles and tori by a direct Fourier computation. Specifically, define $X_N(g) \coloneqq (Na_N)^{-1/2}\laa D_T, \varphi_g\raa$ for each finite-mode mean-zero test $g$. If the second-moment bound
	\begin{equation}\label{eq:uniform-H}
	\tag{H}
	\bbE\!\big[ X_N(g)^2 \big] \;\leq\; C \sum_{k=2}^K c_k^2\,\m_k^{-1/2}
	\end{equation}
holds uniformly in $N$, with implicit constant independent of the finite mode support of $g$, then a standard argument identifies the limit in $H^{-s}_0(M)$ for every $s > (d-1)/2$ (where $d = \dim M$) as the centered Gaussian field with covariance
	\begin{equation}\label{eq:operatorvar}
	 \big\langle g\,,\, \cC_{y_0}\, g \big\rangle_{L^2(M)}\,, \qquad
	 \cC_{y_0} \coloneqq \Pi_0\,\frac{1 - e^{-2y_0\sqrt{\kappa(-\Delta_M)}}}{2\sqrt{\kappa(-\Delta_M)}}\,\Pi_0\,,
	\end{equation}
where $\Pi_0$ is the orthogonal projection onto the mean-zero subspace $L^2_0(M)$. For $d = 1$, this argument recovers the field-level convergence result of~\cite{jerison2014internal2}; for $d \geq 2$, the threshold $s > (d-1)/2$ is the one given by the half-Laplacian Gaussian field's Sobolev embedding on $M$.
\subsection{Examples}
We illustrate Theorem~\ref{th:GFF} on the cycle $\bbZ_N$ and the tori $\bbZ_n^d$ for $d \geq 1$. For $d = 1$ we recover the finite-dimensional content of the result of Jerison, Levine and Sheffield~\cite{jerison2014internal2}; for $d \geq 2$ the result is new, and identifies the scalar limits with the slice at height $y_0$ of the half-cylinder GFF on $\bbT^d \times \bbR_+$ with horizontal generator $-(1/(2d))\Delta_{\bbT^d}$.

\subsubsection{The cycle base $\bbZ_N$}
For $N \geq 2$, let $V_N = \bbZ_N$ be the cycle on $N$ vertices. With our convention $f_1^N \equiv 1$, a complete real orthonormal basis of eigenfunctions of $P_N$ is given by
\begin{equation}\label{eq:fj}
	f_1^N \equiv 1\,, \qquad f_{2j}^N(x) = \sqrt{2} \cos\!\Big(\frac{2\pi j x}{N}\Big)\,, \qquad f_{2j+1}^N(x) = \sqrt{2} \sin\!\Big(\frac{2\pi j x}{N}\Big)
\end{equation}
for $j = 1, 2, \ldots, \lfloor (N-1)/2\rfloor$ and $x \in \bbZ_N$, plus the additional eigenfunction $f_N^N(x) = (-1)^x$ with eigenvalue $\l_N(N) = 0$ when $N$ is even. The corresponding eigenvalues are $\l_1(N) = 1$ and
	\[ \l_{2j}(N) = \l_{2j+1}(N) = \frac{1}{2} + \frac{1}{2}\cos\!\Big(\frac{2\pi j}{N}\Big) = 1 - \frac{\pi^2 j^2}{N^2} + O(j^4/N^4) \qquad \text{for fixed } j\,, \]
so $\t_{rel} \asymp N^2$, and we take $a_N = N$. Assumption~\ref{a:spectral} holds with
	\[ \g_k = \lim_{N \to \infty} a_N \sqrt{2(1 - \l_k(N))} = \sqrt{2}\,\pi\,\lfloor k/2 \rfloor\,, \qquad k \geq 2\,, \]
and $a_N = N \gg \log N$ for $N$ large.

\begin{Theorem}\label{thm:cycle}
Let $\a_2, \ldots, \a_K : \bbR \to \bbR$ be differentiable functions with bounded derivatives, where $K \geq 2$ is an integer independent of $N$, and fix $y_0 > 0$. Let $\phi : \bbZ_N \times \bbR \to \bbR$ be the test function
	\[ \phi(x, y) = \sum_{k=2}^K \a_k(y)\, f_k^N(x)\,, \]
and let $\varphi(x,y) \coloneqq \phi(x, y/a_N)$ on $\bbZ_N \times \bbZ$. Then with $T = N \lfloor y_0 a_N \rfloor$,
	\[ \frac{1}{N} \laa D_T,\, \varphi \raa
	\Rightarrow \cN\!\left( 0\,, \sum_{k=2}^K \a_k(y_0)^2\, \frac{1 - e^{-2\g_k y_0}}{2 \g_k} \right) \]
as $N \to \infty$, where $\g_k = \sqrt{2}\,\pi\,\lfloor k/2 \rfloor$.
\end{Theorem}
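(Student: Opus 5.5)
Theorem~\ref{thm:cycle} follows by specialising Theorem~\ref{th:GFF} to $V_N = \bbZ_N$ with $a_N = N$; it remains to check that base and test function satisfy its hypotheses. First, $\bbZ_N$ is connected and vertex-transitive via the rotations $x \mapsto x+1$, with degree $d = 2$. The lazy kernel~\eqref{eq:PN} is then $P_N f(x) = \tfrac12 f(x) + \tfrac14 f(x+1) + \tfrac14 f(x-1)$. A direct computation on the Fourier modes~\eqref{eq:fj} gives the eigenvalues $\l_{2j}(N) = \l_{2j+1}(N) = \tfrac12 + \tfrac12\cos(2\pi j/N)$. These $f_k^N$ are orthonormal for the normalised inner product~\eqref{eq:product}, since the averages of $2\cos^2$ and $2\sin^2$ over $\bbZ_N$ equal $1$ for $1\le j \le \lfloor (N-1)/2\rfloor$, and cross terms vanish by the orthogonality of characters. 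For $k\le K$ fixed and $N$ large, $j=\lfloor k/2\rfloor$ is admissible, and the ordering~\eqref{eq:eigenvalues} is consistent with the labelling since $\cos(2\pi j/N)$ decreases in $j$ on this range.

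Next I check Assumption~\ref{a:spectral} and~\eqref{eq:aN} for $2\le k\le K$. We have $1-\l_{2j}(N) = \tfrac12(1-\cos(2\pi j/N)) = \sin^2(\pi j/N)$, so
\[ a_N\sqrt{2(1-\l_k(N))} = \sqrt2\, N \sin\!\big(\pi \lfloor k/2\rfloor/N\big) \to \sqrt2\,\pi\lfloor k/2\rfloor = \g_k \in (0,\infty). \]
In particular $\g_2 = \sqrt2\pi>0$, so $a_N = N$ is a valid normalisation in~\eqref{eq:aN}. The growth condition $a_N = N \gg \log N + \log_+ N$ holds trivially.

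The test function $\phi$ is of the form required in Theorem~\ref{th:GFF}: $K$ is fixed, each $\a_k$ is independent of $N$, and each $\a_k$ is differentiable with bounded derivative on all of $\bbR$, hence on a neighbourhood of $y_0$. Theorem~\ref{th:GFF} then gives $(Na_N)^{-1/2}\laa D_T,\varphi\raa \Rightarrow \cN(0,\sigma^2(\phi))$. Since $\sqrt{N a_N} = N$, the left side is exactly $\frac1N\laa D_T,\varphi\raa$. The variance $\sigma^2(\phi)$ is the stated one with $\g_k = \sqrt2\pi\lfloor k/2\rfloor$.

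There is no real obstacle here. The only points needing care are bookkeeping: matching the indexing of the real Fourier basis to the ordering~\eqref{eq:eigenvalues} (eigenvalues appear in equal pairs, and when $N$ is even the extra mode $(-1)^x$ sits at the bottom of the spectrum and never enters for fixed $K$), and ensuring that $f_k^N$ in the statement is the same eigenvector family used in Theorem~\ref{th:GFF}. Any orthonormal choice within each multiplicity block is allowed there, and $\sigma^2$ depends only on the eigenvalues, so this causes no difficulty.
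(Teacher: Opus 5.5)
Your proposal is correct and follows the paper's own proof: both apply Theorem~\ref{th:GFF} to $V_N=\bbZ_N$ with $a_N=N$, using vertex-transitivity, Assumption~\ref{a:spectral} with $\g_k=\sqrt2\,\pi\lfloor k/2\rfloor$, $a_N=N\gg\log N$, and $\sqrt{Na_N}=N$. Your extra bookkeeping (the identity $1-\l_{2j}(N)=\sin^2(\pi j/N)$ and the consistency of the eigenvalue ordering with the Fourier labelling) only spells out what the paper checks in the text just before the theorem.
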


\begin{proof}
This is Theorem~\ref{th:GFF} applied to $V_N = \bbZ_N$ with $a_N = N$: the cycle is vertex-transitive, Assumption~\ref{a:spectral} is verified above, and $a_N = N \gg \log N$.
\end{proof}

This is essentially the GFF result of Jerison, Levine and Sheffield~\cite{jerison2014internal2}, Theorem~3, which is recovered up to the normalization difference between the lazy and non-lazy walks.

\subsubsection{The $d$-dimensional torus $\bbZ_n^d$}
Fix an integer $d \geq 1$ and let $V_N = \bbZ_n^d$ with $N = n^d$. The eigenfunctions of $P_N$ are tensor products of the cycle eigenfunctions~\eqref{eq:fj}, and their eigenvalues have the standard Fourier expansion
	\[ \l_{\underline m}(N) = 1 - \frac{\pi^2}{d\, n^2}|\underline m|^2 + O(n^{-4}) \qquad \text{as } n \to \infty\,, \]
uniformly on every fixed finite set of frequency vectors $\underline m \in \bbZ^d$. Taking $a_N = n = N^{1/d}$, for each fixed $K \geq 2$ the $K$ largest eigenvalues $\l_k(N)$ come from the $K$ frequency vectors $\underline m$ of smallest Euclidean norm (counted with multiplicity): for any fixed such $\underline m$, the $O(n^{-4})$ term is negligible and the ordering of the $\l_{\underline m}(N)$ for $n$ large matches the ordering of $|\underline m|^2$, which is independent of $n$ on any fixed finite set. Assumption~\ref{a:spectral} therefore holds with $\g_k$ the positive square roots of the eigenvalues of the continuum operator $-(1/(2d))\Delta_{\bbT^d}$ (counted with multiplicity), and $a_N = N^{1/d} \gg \log N$ for $N$ large.

\begin{Theorem}\label{thm:torus}
Let $\a_2, \ldots, \a_K : \bbR \to \bbR$ be differentiable functions with bounded derivatives, where $K \geq 2$ is an integer independent of $N$, and fix $y_0 > 0$. Let
	\[ \phi(\underline x, y) = \sum_{k=2}^K \a_k(y)\, f_k^N(\underline x) \]
be the corresponding test function on $\bbZ_n^d \times \bbR$, and let $\varphi(\underline x, y) \coloneqq \phi(\underline x, y/a_N)$ on $\bbZ_n^d \times \bbZ$. Then with $T = N \lfloor y_0 a_N \rfloor$,
	\[ \frac{1}{\sqrt{N a_N}} \laa D_T,\, \varphi \raa
	\Rightarrow \cN\!\left(0\,, \sum_{k=2}^K \a_k(y_0)^2\, \frac{1 - e^{-2\g_k y_0}}{2\g_k} \right) \]
as $n \to \infty$, where $\g_k \coloneqq \lim_{n\to\infty} a_N \sqrt{2(1 - \l_k(N))}$.
\end{Theorem}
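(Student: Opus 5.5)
Theorem~\ref{thm:torus} will follow from Theorem~\ref{th:GFF}, specialised to $V_N = \bbZ_n^d$ with $N = n^d$ and $a_N = n = N^{1/d}$, in the same way Theorem~\ref{thm:cycle} did. We check the hypotheses of Theorem~\ref{th:GFF} one at a time.

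First, $\bbZ_n^d$ (the Cayley graph with generators $\pm e_i$) is finite, connected and vertex-transitive: translations act transitively and preserve adjacency. Moreover $P_N$ is exactly the lazy simple random walk~\eqref{eq:PN} with degree $2d$. Second, the growth condition $a_N \gg \log N + \log_+ a_N$ holds because $a_N = N^{1/d}$ grows polynomially, while $\log N + \log_+ a_N = (1 + 1/d)\log N$ grows logarithmically.

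The main point is Assumption~\ref{a:spectral} for $2 \le k \le K$. The eigenvalues are the averages of cycle eigenvalues,
\[ \l_{\underline m}(N) = \tfrac{1}{2} + \tfrac{1}{2d}\sum_{i=1}^d \cos(2\pi m_i/n). \]
A Taylor expansion gives
\[ 1 - \l_{\underline m}(N) = \frac{\pi^2}{d\,n^2}|\underline m|^2 + O(n^{-4}), \]
uniformly on finite sets of $\underline m$. We then argue as follows.
\begin{itemize}
\item For any fixed $\underline m$ with $|\underline m|$ above the $K$-th smallest nonzero norm, $1 - \l_{\underline m}$ is bounded below by $c\,|\underline m|^2/n^2$. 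This uses $1 - \cos\theta \ge c\,\theta^2$ on $[-\pi,\pi]$, taking representatives $|m_i| \le n/2$.
\item Hence the $K$ largest eigenvalues, counted with multiplicity, are indexed by the $K$ frequency vectors of smallest norm for $n$ large.
\item Therefore $2a_N^2(1-\l_k(N)) \to \frac{2\pi^2}{d}|\underline m_k|^2 =: \g_k^2$, which is positive for $k \ge 2$ since $\underline m_k \ne 0$.
\end{itemize}
The main obstacle is the ordering within multiplicity blocks, where ties in $|\underline m|$ are split by the $O(n^{-4})$ terms. It is resolved by noting that the limit $\g_k$ does not depend on which vector of the block is labelled $k$. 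We fix the Fourier product basis as a consistent labelling of the $f_k^N$.

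Finally, the test function hypotheses of Theorem~\ref{th:GFF} hold since each $\a_k$ is independent of $N$ and differentiable with bounded derivative everywhere, in particular near $y_0$. Theorem~\ref{th:GFF} then yields the stated convergence with $T = N\lfloor y_0 a_N\rfloor$ and $\g_k = \lim a_N\sqrt{2(1-\l_k(N))}$. The limit $N \to \infty$ there is the same as $n \to \infty$ here.
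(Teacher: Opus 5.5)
Your proposal is correct and follows the paper's proof. Both check vertex-transitivity of $\bbZ_n^d$, the growth condition for $a_N = N^{1/d}$, and Assumption~\ref{a:spectral} via $1-\l_{\underline m}(N) = \frac{\pi^2}{d n^2}|\underline m|^2 + O(n^{-4})$, then invoke Theorem~\ref{th:GFF}; your extra bound $1-\cos\theta \geq c\,\theta^2$ usefully rules out high frequencies, which the paper's ``uniformly on every fixed finite set'' leaves implicit, though since the optimal $c = 2/\pi^2$ is below the Taylor constant $1/2$, you must pair it with the Taylor expansion on the finite set of moderately sized $\underline m$ to get strict separation from the top $K$.
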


\begin{proof}
The base $V_N = \bbZ_n^d$ is vertex-transitive, Assumption~\ref{a:spectral} holds as shown above, and $a_N = N^{1/d} \gg \log N$ for $N$ large. The conclusion follows from Theorem~\ref{th:GFF}.
\end{proof}

For $d=1$, this recovers Theorem~\ref{thm:cycle}. For $d \geq 2$, the result is new: the scalar limits in Theorem~\ref{thm:torus} match the trace at height $y_0$ of the half-cylinder GFF on $\bbT^d \times \bbR_+$ with horizontal generator $-(1/(2d))\Delta_{\bbT^d}$ (see Section~\ref{sec:FGF}).

\subsection{Related results}

\subsubsection{Non-lattice bases}
IDLA has been studied on a range of graphs beyond the lattice $\bbZ^d$. Blach{\`e}re \cite{blachere2004internal} treated discrete groups of polynomial growth, and Blach{\`e}re and Brofferio \cite{blachere2007internal} extended this line to groups of exponential growth. Huss \cite{huss2008internal} proved a shape theorem on non-amenable graphs. Shellef \cite{shellef2010idla} obtained the Euclidean-ball inner bound on supercritical Bernoulli percolation clusters in $\bbZ^d$, and Duminil-Copin, Lucas, Yadin and Yehudayoff \cite{duminil2013containing} completed the picture by proving the corresponding outer bound and full limit-shape theorem.

On comb lattices, Huss and Sava-Huss \cite{huss2012internal} derived an IDLA inner bound via the divisible sandpile as part of a comparative study of three aggregation models, while Asselah and Rahmani \cite{asselah2016fluctuations} subsequently analysed fluctuations around the asymptotic shape. On the pre-fractal Sierpinski gasket graph, Chen, Huss, Sava-Huss and Teplyaev \cite{chen2017internal} proved a shape theorem, and Heizmann \cite{heizmann2023fluctuations} analysed the corresponding fluctuations. Bou-Rabee and Gwynne \cite{bourabee2022harmonic, bourabee2022matedcrt} introduced and analysed IDLA on mated-CRT maps, a family of random planar maps that converge to Liouville quantum gravity surfaces; the limit shape is described by LQG harmonic balls, constructed via an obstacle-problem formulation of Hele--Shaw flow.

\subsubsection{IDLA on cylinders}
Jerison, Levine and Sheffield~\cite{jerison2014internal2} stated a logarithmic maximal-fluctuation bound on the cylinder $\bbZ_N \times \bbZ$: at the quadratic time $T \asymp N^2$, the cluster is trapped between heights $T/N \pm C \log N$ with high probability. Levine and Silvestri~\cite{levine2018long} analysed IDLA on the cycle-base cylinder as a Markov chain on the space of clusters. They proved that, by introducing a suitable shift procedure, IDLA becomes a positive recurrent Markov chain whose stationary distribution concentrates on clusters of logarithmic height in $N$, and they identified the total-variation mixing time  up to logarithmic factors. Silvestri~\cite{silvestri2020internal} extended part of these results to cylinders with quasi-regular base graphs.

\subsubsection{Variants of the dynamics and of the initial configuration}
Beyond changing the base graph, one may vary the source distribution or the walk rule. Levine and Peres~\cite{levine2010scaling} proved continuum scaling limits for IDLA with multiple source points in $\bbZ^d$; Chenavier, Coupier, Penner and Rousselle~\cite{chenavier2024hyperplane} recently studied the hyperplane-source analogue, which is particularly close in spirit to the flat-interface setup of the present paper, and Darrow~\cite{darrow2024extended} treated extended-source IDLA in the plane. In a different direction, Benjamini, Duminil-Copin, Kozma and Lucas~\cite{benjamini2017internal} studied IDLA with particles released uniformly at random from the existing aggregate rather than from a fixed seed, and Asselah, Silvestri and Taggi~\cite{asselah2025internal} recently introduced a critical branching-random-walk variant, identifying a phase transition on the shape theorem according to the dimension.

\subsubsection{Laplacian growth: divisible sandpile, rotor aggregation, and related models}
IDLA is one of several discrete models of Laplacian growth whose scaling limits are characterized by free-boundary and obstacle problems, see~\cite{holroyd2008chip, levine2017laplacian} for surveys. Gravner and Quastel~\cite{gravner2000internal} showed that generalized IDLA with finitely many Poisson sources has a hydrodynamic limit governed by the one-phase Stefan problem, providing one of the earliest direct bridges between IDLA and free-boundary PDEs. Levine and Peres introduced the divisible sandpile~\cite{levine2009strong} as a deterministic analogue of IDLA, and proved~\cite{levine2010scaling} that IDLA, the divisible sandpile, and rotor-router aggregation all share the same continuum scaling limit on $\bbZ^d$ (including the multiple-source case), described by a PDE free-boundary problem. Levine, Murugan, Peres and Ugurcan~\cite{levine2016divisible} established a sharp stabilization/explosion dichotomy for the divisible sandpile with i.i.d.\ initial masses on vertex-transitive graphs, recently extended to general bounded-degree graphs by Bou-Rabee, Peres and Sava-Huss~\cite{bourabee2026divisible}. Pegden and Smart~\cite{pegden2013convergence} proved convergence of the Abelian sandpile on $\bbZ^d$ and characterized the limiting configuration via an elliptic obstacle problem. Bou-Rabee~\cite{bourabee2019convergence, bourabee2021exploding} proved convergence of the Abelian sandpile with random initial conditions. External DLA, the counterpart of IDLA in which particles come from infinity, was introduced by Witten and Sander~\cite{witten1981diffusion}; rigorous results on its continuum limit are known only in specific models, most notably Hastings--Levitov aggregation~\cite{hastings1998laplacian, norris2023scaling, norris2024stability}. A $o(1)$-density result for external DLA on a cylinder with rapidly mixing base graph was shown in \cite{BY08DLA}.

\subsection{Overview of the proof}
The proof of Theorem~\ref{th:GFF} follows the martingale strategy of Jerison, Levine and Sheffield~\cite{jerison2014internal, jerison2014internal2}. The starting point is the observation that, when $\varphi$ is discrete harmonic and has zero horizontal average, the sum $\sum_{(x, y) \in A(t)} \varphi(x, y)$ is a martingale in the particle index $t$: each new particle settles at a random-walk exit site of the current cluster, and by harmonicity its contribution has zero conditional mean. A martingale central limit theorem~\cite{mcleish1974dependent} then yields a Gaussian limit. Implementing this strategy on cylinders over a general vertex-transitive base requires two new ingredients.

The first is a replacement step. Since the original test function $\varphi$ need not be discrete harmonic on the random cluster, we replace it by the discrete harmonic extension of its values
at the cluster's typical height and control the replacement error by an Efron--Stein variance inequality. Exploiting the abelian property of IDLA, resampling a single trajectory changes the cluster by at most two sites (Proposition~\ref{pr:exchange}); combined with vertex-transitivity, the resulting variance bound reduces to a layerwise $L^2$ estimate that closes by orthonormality of the base eigenfunctions alone. Where prior cylinder treatments rely on coupling and mixing-time arguments~\cite{jerison2014internal2, levine2018long, silvestri2020internal}, this step uses only the abelian property, vertex-transitivity, and orthonormality.

The second is a high-probability concentration bound on the IDLA cluster around its typical height (Theorem~\ref{th:max_fluct}). By vertex-transitivity, the horizontal coordinate of the cylinder walk is, conditionally on its vertical trajectory, uniformly distributed on $V_N$ at the first hitting time of each level; combined with the martingale inner-bound technique of Jerison, Levine and Sheffield~\cite{jerison2012logarithmic}, the alive/ghost decomposition of Lawler, Bramson and Griffeath~\cite{lawler1992internal}, and Freedman's martingale inequality~\cite{freedman1975tail}, this yields the bound. The result goes beyond what could be achieved with Silvestri's~\cite{silvestri2020internal} mixing approach and, unlike Levine--Silvestri~\cite{levine2018long}, applies to a wide range of base graphs. 

Together these ingredients reveal that the GFF is a universal scaling limit for IDLA fluctuations on cylinder graphs.

\subsection{Organization of the article}
Section~\ref{sec:apriori} establishes a classical a priori large-deviation bound on the height of IDLA clusters (Proposition~\ref{pr:apriori}). Section~\ref{sec:preliminaries} collects preliminaries on discrete harmonic functions and the martingale structure of IDLA, and proves the replacement Lemma~\ref{le:replace} that allows us to test against a discrete harmonic extension of the original test function. Section~\ref{sec:proof_GFF} proves Theorem~\ref{th:GFF} via the martingale CLT, assuming the maximal fluctuations bound of Theorem~\ref{th:max_fluct}. The remainder of the paper proves Theorem~\ref{th:max_fluct}: Section~\ref{sec:inner} establishes the inner bound by an argument adapted from Jerison, Levine and Sheffield~\cite{jerison2012logarithmic}, using Freedman's martingale inequality; Section~\ref{sec:outer} deduces the outer bound from the inner bound by a coupling argument and the a priori bound from Section~\ref{sec:apriori}; and Section~\ref{sec:reduction} handles the regime $T > T^\sharp$ by an abelian layered-release coupling due to \cite{silvestri2020internal}.

\medskip 

 \section{A priori outer-height bound}\label{sec:apriori}
To start with, we discuss a classical large-deviation bound on the maximal height of IDLA clusters starting from the flat configuration $A(0) = R_0$, which will be used throughout the article. 

\begin{Proposition}\label{pr:apriori}
Let $(A(t))_{t \geq 0}$ be an IDLA process on $V_N \times \bbZ$ starting from $A(0) = R_0$.
\begin{itemize}
\item[(a)] Fix $C \in (0, \infty)$ and $\b > \max(1, e^2 C)$. Then, for every integer $T \leq C N \log N$ and every $N$ large enough (depending on $C$ and $\b$),
	\[ \bbP \big( A(T) \nsubseteq R_{\b \log N} \big) \leq N^{-(\b - 1)}\,. \]
\item[(b)] Fix $\b > e^2$. Then there exists $K_0 = K_0(\b) < \infty$ such that, for every integer $T \geq K_0\, N\log N$ and every $N$ large enough,
	\[ \bbP \big( A(T) \nsubseteq R_{\b T/N} \big) \leq \exp\!\Big( -\b\, \frac{T}{N}\Big)\,. \]
\end{itemize}
\end{Proposition}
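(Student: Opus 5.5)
We prove both parts with the classical union bound over self-avoiding ``chains of particles'' from Lawler--Bramson--Griffeath, adapted to the layered geometry. Suppose $A(T) \nsubseteq R_h$ for some integer height $h$. The first particle to occupy layer $h$ had to be preceded by particles occupying layers $1, 2, \dots, h-1$. Tracing back the ancestors gives indices $t_1 < t_2 < \cdots < t_h \le T$ such that particle $t_j$ settled in layer $j$. The event ``particle $t_j$ settles at a site of layer $j$ whose vertical neighbour below was occupied at time $t_j-1$'' lets us do something stronger. We build the chain so that particle $t_j$ is the first particle to reach layer $j$ in its column, or at least in layer $j$. Then particle $t_j$, once it first hits layer $j-1$, must step upward before exiting.

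The key estimate is a per-step probability bound. We condition on $\cF_{t_j-1}$ and on layer $j-1$ having only just been reached, i.e.\ it is not fully occupied. By vertex-transitivity, the horizontal coordinate of the cylinder walk is uniform on $V_N$ when it first hits layer $j-1$, conditionally on its vertical path. The cleanest route, though, is the following. The set of occupied sites in layer $j-1$ at time $t_j - 1$ has size at most $t_j - 1$, since $|A_+|$ equals the number of particles. The walk must pass through layer $j-1$ at an occupied site and then reach layer $j$. For the first-arrival structure, what matters is that the site $(x, j-1)$ was itself a first arrival, so at most one candidate column is involved. The sharper form we aim for is
\[
\bbP\big(\text{particle } t \text{ settles in layer } j \text{ above a specific occupied column}\,\big|\,\cF_{t-1}\big) \le \frac{c}{N},
\]
coming from the uniform horizontal position at the hitting time of layer $0$. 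We combine this with the fact that the number of choices of the column at each step is controlled.

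Summing over the ordered index tuples and column choices gives
\[
\bbP\big(A(T)\nsubseteq R_h\big) \le \sum_{t_1<\dots<t_h\le T} \prod_{j} \frac{1}{N} \le \binom{T}{h} N^{-h} \le \Big(\frac{eT}{hN}\Big)^{h}.
\]

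For part (a), take $h = \lceil \b \log N\rceil$ and $T \le C N\log N$. The ratio is then $eC/\b < e^{-1}$ because $\b > e^2 C$. The bound is therefore at most $e^{-h} \le N^{-\b} \le N^{-(\b-1)}$, where the extra factor $N$ absorbs the union over starting columns.

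For part (b), take $h = \b T/N$. The ratio becomes $e/\b < e^{-1}$, so the bound is $\exp(-h) = \exp(-\b T/N)$. Any polynomial prefactors, such as the $N$ column choices, are absorbed by $T \ge K_0 N\log N$ with $K_0$ large, using the slack between $\log(\b/e)$ and $1$.

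The main obstacle is justifying the $1/N$ per-step factor for a chain on the cylinder rather than for a single ray. A walk can reach layer $j$ through any of up to $N$ occupied columns, so a naive bound gives probability of order $1$. I would first try to use the upward chain of a single column, $(x,1),\dots,(x,h)$, fixing $x$ by a union bound at cost $N$. Particle $t_j$ must then exit at $(x,j)$. The probability of this is at most the chance that the walk's first visit to $\{x\}\times\bbZ$ above level $0$ leads there, and we would bound it by $1/N$ via the uniformity of the horizontal coordinate given the vertical path, together with the fact that the walk exits at its first visit to an unoccupied site.

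If this chain fails to exist, I would fall back on the columnwise ancestry of the Lawler--Bramson--Griffeath argument, where the tree of supports is traced with branching accounted for by the same binomial count.
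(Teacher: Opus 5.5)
Your final estimates for (a) and (b) are fine, but the probabilistic core of the argument, a factor $1/N$ per link of the chain, is never established, and the mechanisms you propose for it fail. On the cylinder a settled site need not lie above an occupied site, and occupied columns need not be vertically contiguous. Suppose particles settle at $(x',1)$ and $(x',2)$. A third walker can climb column $x'$, step horizontally to a neighbour $x \sim x'$ and settle at $(x,2)$. A fourth can follow the same route and settle at $(x,3)$. Level $3$ is then reached with $(x,1)$ and $(x',3)$ both empty. Hence $A(T)\nsubseteq R_h$ does not force a fully occupied column, and your single-column chain need not exist. Nor is ``settling in layer $j$ above a specific occupied column'' an event of conditional probability $O(1/N)$. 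The walker can reach a given site of layer $j$ after wandering through occupied sites of layer $j$, or by stepping down from occupied sites above it. If you also sum over a column choice at each link, you pay a factor $N$ per link, which cancels the $1/N$. Your bookkeeping shows the same problem. The product $\prod_{j=1}^h N^{-1}$ charges $1/N$ to the first particle, which is not available, since early particles settle at level $1$ with probability close to one. The compensating ``union over starting columns'' has no counterpart in a correct argument.

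The missing idea is to link consecutive particles through the first hitting position of the layer below. Suppose particle $t_j$ settles at level $j\geq 2$. Since vertical steps are nearest-neighbour, its walker first hits layer $j-1$ before exiting $A(t_j-1)$, hence at an occupied site. That site is $(X_{t_{j-1}},j-1)$, the settling site of a unique earlier particle $t_{j-1}<t_j$. Iterating yields $t_1<\cdots<t_m\leq T$ with $m=\lfloor h\rfloor+1$. Now condition on the first $t_j-1$ trajectories. The horizontal coordinate of walker $t_j$ at its first hitting time of layer $j-1$ is uniform on $V_N$: it starts uniform, every step either freezes it or applies the doubly stochastic $P_N$, and the hitting time depends only on the vertical path. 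Meanwhile $X_{t_{j-1}}$ is already determined, so each link has conditional probability exactly $1/N$. Conditioning successively on $j=m,m-1,\dots,2$ bounds the probability of a given index tuple by $N^{-(m-1)}$. There is no factor for $t_1$ and no sum over columns, since $t_{j-1}$ pins down the site. This gives $\bbP(A(T)\nsubseteq R_h)\leq\binom{T}{m}N^{-(m-1)}$, after which your estimates for (a) and (b) go through as written. This is exactly the paper's bound. The paper obtains it in expectation rather than by a union bound over chains: it proves $\mu_m(t+1)-\mu_m(t)\leq\mu_{m-1}(t)/N$ for the expected number $\mu_m(t)$ of occupied sites at level $m$, then applies the hockey-stick identity and Markov's inequality.
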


\begin{proof}
The argument follows Lemma~C.1 of~\cite{levine2018long}, adapted to the present setting. The structural inputs are: by vertex-transitivity, $P_N$ preserves the uniform measure on $V_N$; walkers are released from the uniform distribution on $V_N \times \{0\}$; and the vertical part of the cylinder kernel~\eqref{def:SRW} is nearest-neighbour, so every path that reaches level $m$ must first pass through level $m-1$.

For each integer $m \geq 1$ and integer $t \geq 0$, let
	\[ Z_m(t) \coloneqq |A(t) \cap (V_N \times \{m\})|\,, \qquad \m_m(t) \coloneqq \bbE[Z_m(t)]\,, \]
denote the number of occupied sites at vertical level $m$ in $A(t)$ and its expectation. 

Set $m_h \coloneqq \lfloor h \rfloor + 1$. Since any particle reaching above height $h$ must pass through all layers at most height $h$, we have that
the event $\{ A(t)  \nsubseteq R_h \}$ implies the event $\{Z_{m_h}(t) \geq 1 \}$.
By Markov's inequality,
	\begin{equation}\label{eq:height-bound}
	 \bbP\!\big( A(t) \nsubseteq R_h \big) = \bbP\!\big( Z_{m_h}(t) \geq 1 \big) \leq \m_{m_h}(t)\,.
	\end{equation}

We now claim that, for every integer $m \geq 1$ and every $t \in \bbN_0$,
	\begin{equation}\label{LBG_up}
	 \m_m(t) \leq \Big( \frac{1}{N} \Big)^{m-1} \binom{t}{m} \leq \Big( \frac{1}{N} \Big)^{m-1}\frac{t^m}{m!}\,.
	\end{equation}

We first establish the recurrence
	\begin{equation}\label{eq:mu-recurrence}
	 \m_m(t+1) - \m_m(t) \leq \frac{1}{N}\, \m_{m-1}(t) \qquad \text{for every integer } m \geq 2 \text{ and } t \geq 0\,.
	\end{equation}
Conditional on $A(t)$, the cluster at time $t+1$ is $A(t) \cup \{ (X_{t+1}, Y_{t+1}) \}$, where $(X_{t+1}, Y_{t+1})$ is the first vertex of the $(t{+}1)$-st walk outside $A(t)$. Hence
	\[ \bbE\!\big[ Z_m(t+1) - Z_m(t) \mid A(t) \big] = \bbP\!\big( Y_{t+1} = m \mid A(t) \big)\,. \]
Let $(\widehat X_s, \widehat Y_s)_{s \geq 0}$ denote the underlying cylinder walk associated with the $(t{+}1)$-st particle (so $(\widehat X_0, \widehat Y_0)$ is uniformly distributed on $V_N \times \{0\}$, and $(X_{t+1}, Y_{t+1})$ is the value of $(\widehat X_s, \widehat Y_s)$ at the first time it lies outside $A(t)$). Since the vertical part of~\eqref{def:SRW} is nearest-neighbour, for the particle to settle at level $m$ the walker must first pass through level $m-1$ at a site that is already in $A(t)$ (otherwise it would settle at level $m-1$). Let
	\[ \tau_{m-1} \coloneqq \inf\{ s \geq 0 : \widehat Y_s = m-1 \} \]
be the first hitting time of level $m-1$ by the underlying walk. By~\eqref{def:SRW}, the vertical coordinate satisfies
	\[ \bbP\big(\widehat Y_{s+1} - \widehat Y_s = 1\big) = \bbP\big(\widehat Y_{s+1} - \widehat Y_s = -1\big) = \tfrac{1}{4}\,, \qquad
	\bbP\big(\widehat Y_{s+1} - \widehat Y_s = 0\big) = \tfrac{1}{2}\,, \]
so $(\widehat Y_s)_{s \geq 0}$ is the lazy symmetric nearest-neighbour random walk on $\bbZ$, which is recurrent. Hence $\tau_{m-1} < \infty$ almost surely, and
	\begin{equation}\label{eq:reach-bound}
	 \bbP\!\big( Y_{t+1} = m \mid A(t) \big) \leq \bbP\!\big( (\widehat X_{\tau_{m-1}}, m-1) \in A(t) \mid A(t) \big)\,.
	\end{equation}

The key observation is that $\widehat X_{\tau_{m-1}}$ is uniformly distributed on $V_N$ regardless of the vertical history. By the cylinder kernel~\eqref{def:SRW}, every step of $(\widehat X_s, \widehat Y_s)$ is either a vertical step (which leaves $\widehat X$ unchanged) or a horizontal step (which updates $\widehat X$ via the kernel $P_N$). Conditionally on the entire vertical trajectory $(\widehat Y_s)_{s \geq 0}$, the horizontal coordinate $(\widehat X_s)$ is a discrete-time Markov chain on $V_N$ that performs the kernel $P_N$ at each horizontal step and stays put at each vertical step. Since $P_N$ preserves the uniform distribution and $\widehat X_0$ is uniform on $V_N$, the conditional distribution of $\widehat X_{\tau_{m-1}}$ given the vertical history is uniform on $V_N$. Marginalizing over the vertical history yields
	\[ \widehat X_{\tau_{m-1}} \text{ is uniformly distributed on } V_N\,, \]
independently of $A(t)$ (which depends only on the first $t$ walks, distinct from the $(t{+}1)$-st). Therefore
	\[ \bbP\!\big( (\widehat X_{\tau_{m-1}}, m-1) \in A(t) \mid A(t) \big)
	= \frac{|A(t) \cap (V_N \times \{m-1\})|}{N}
	= \frac{Z_{m-1}(t)}{N}\,. \]
Combining with~\eqref{eq:reach-bound} and taking expectations gives~\eqref{eq:mu-recurrence}.

We now prove~\eqref{LBG_up} by induction on $m$. For $m = 1$, since $|V_N| = N$ and at each step at most one site is added, $\m_1(t) = \bbE[Z_1(t)] \leq \min(t, N) \leq t = \binom{t}{1}$, which is~\eqref{LBG_up} with $m = 1$.

Assume~\eqref{LBG_up} holds for $m - 1 \geq 1$. Since $A(0) = R_0$ contains no site at vertical level $m \geq 1$, we have $\m_m(0) = 0$. Hence, by~\eqref{eq:mu-recurrence} and a telescoping sum,
	\[ \m_m(t) = \m_m(0) + \sum_{s=0}^{t-1} \big( \m_m(s+1) - \m_m(s) \big)
	\leq \frac{1}{N} \sum_{s=0}^{t-1} \m_{m-1}(s)
	\leq \frac{1}{N^{m-1}} \sum_{s=0}^{t-1} \binom{s}{m-1}\,. \]
Using the hockey stick identity $\sum_{s=0}^{t-1} \binom{s}{m-1} = \binom{t}{m}$, we obtain
	\[ \m_m(t) \leq \frac{1}{N^{m-1}} \binom{t}{m} \leq \frac{1}{N^{m-1}} \frac{t^m}{m!}\,, \]
which completes the induction, proving \eqref{LBG_up}.

For part~(a), apply~\eqref{eq:height-bound} with $h = \b \log N$, so that $m \coloneqq \lfloor \b \log N \rfloor + 1$:
	\[ \bbP\!\big( A(T) \nsubseteq R_{\b\log N} \big) \leq \m_m(T) \leq \frac{1}{N^{m-1}} \frac{T^m}{m!}\,. \]
Using $T \leq CN\log N$ and $m! \geq m^m e^{-m}$,
	\[ \frac{T^m}{N^{m-1} m!} \leq N \cdot \Big( \frac{eT}{Nm} \Big)^m
	\leq N \cdot \Big( \frac{eC \log N}{m} \Big)^m
	\leq N \cdot \Big( \frac{eC}{\b} \Big)^m\,, \]
where we used $m \geq \b \log N$ in the last step. Since $\b \geq e^2 C$, we have $eC/\b \leq e^{-1}$, so the right-hand side is bounded by $N \cdot e^{-m} \leq N \cdot e^{-\b\log N} = N^{1 - \b}$, which is the desired bound.

For part~(b), apply~\eqref{eq:height-bound} with $h = \b T/N$ and $m \coloneqq \lfloor \b T/N \rfloor + 1$:
	\[ \bbP\!\big( A(T) \nsubseteq R_{\b T/N} \big) \leq \m_m(T) \leq \frac{1}{N^{m-1}} \frac{T^m}{m!}\,. \]
Using the identity $N^{-(m-1)} = N \cdot N^{-m}$ together with $m! \geq m^m e^{-m}$ and $m \geq \b T/N$,
	\[ \frac{T^m}{N^{m-1}\, m!}
	= N \cdot \frac{T^m}{N^m\, m!}
	\leq N\, \Big(\frac{e T}{Nm}\Big)^m
	\leq N\, \Big(\frac{e}{\b}\Big)^m
	= \exp\!\Big( \log N - m\,(\log\b - 1) \Big)\,. \]
Since $\b > e^2$ we have $\log\b - 1 > 1$, so using $m \geq \b T/N$ the exponent is bounded above by
	\[
	\log N - \b\,\frac{T}{N}\,(\log\b - 1)
	\;=\; -\b\,\frac{T}{N} \;+\; \Big[\log N - \b\,\frac{T}{N}\,(\log\b - 2)\Big]\,.
	\]
Since $\b > e^2$, we have $\log\b - 2 > 0$. Now take
\[
  K_0 \coloneqq K_0(\b) \coloneqq \frac{1}{\b\,(\log\b - 2)}\,.
\]
For every integer $T \geq K_0 N\log N$, we have $\b(\log\b - 2)\,T/N \geq \log N$, so the bracket is non-positive. Hence the exponent is at most $-\b T/N$, as claimed.
\end{proof}

\begin{Remark}[Uniform binomial form]\label{rk:apriori-uniform}
The argument above in fact establishes the following bound, from which parts~(a) and~(b) drop out by two particular choices of $h$: for every real $h \geq 0$ and every integer $T \geq 0$,
	\[ \bbP\!\big( A(T) \nsubseteq R_h \big) \;\leq\; \frac{1}{N^{\lfloor h \rfloor}}\, \binom{T}{\lfloor h\rfloor + 1}\,. \]
This combines~\eqref{eq:height-bound} and~\eqref{LBG_up}. Part~(a) then follows by taking $h = \b \log N$; part~(b) follows by taking $h = \b T/N$ and additionally using $T \geq K_0 N \log N$ to absorb the leading factor of $N$. In both cases the binomial coefficient is estimated via $\binom{T}{m} \leq T^m/m!$ together with $m! \geq m^m e^{-m}$.
\end{Remark}

\begin{Remark}\label{rk:apriori-Ndep}
The proof of part~(b) extends uniformly to an $N$-dependent exponent. Let $\b_0 > e^2$ be fixed, let $(\b_N)_{N \geq 1}$ be a sequence with $\b_N \geq \b_0$ for every $N$, and let $(T_N)_{N \geq 1}$ be a sequence of integers with $T_N/(N \log N) \to \infty$ as $N \to \infty$. Repeating the proof of part~(b) with the same choice $m_N \coloneqq \lfloor \b_N\, T_N/N\rfloor + 1$ gives
	\begin{align*}
	\bbP\!\big( A(T_N) \nsubseteq R_{\b_N T_N/N} \big)
	&\leq \exp\!\Big( \log N - m_N\,(\log\b_N - 1) \Big) \\
	&\leq \exp\!\Big( {-}\b_N\,\frac{T_N}{N}
	\;+\; \Big[\log N - \b_N\,(\log\b_N - 2)\,\frac{T_N}{N}\Big] \Big)\,.
	\end{align*}
Since the function $g(x) \coloneqq x\,(\log x - 2)$ is increasing on $(e, \infty)$, $\b_N\,(\log\b_N - 2) \geq \b_0\,(\log\b_0 - 2) > 0$ uniformly in $N$. Combined with $T_N/(N \log N) \to \infty$, this forces the bracket
	\[ \log N - \b_N\,(\log\b_N - 2)\,\frac{T_N}{N} \;\longrightarrow\; -\infty\,, \]
so the bracket is non-positive for every $N$ large enough, and therefore
	\[ \bbP\!\big( A(T_N) \nsubseteq R_{\b_N T_N/N} \big)
	\leq \exp\!\Big( {-}\b_N\,\frac{T_N}{N} \Big)\,. \]
We use this $N$-dependent form in the proof of Lemma~\ref{clm:Q on Ec}.
\end{Remark}

\section{Discrete harmonic functions and IDLA}\label{sec:preliminaries}

For simplicity we begin the discussion on the (non-rescaled) cylinder $\bbG_N$, and only later in this section do we rescale the vertical coordinate by a factor $a_N$ to recover the setting of Theorem~\ref{th:GFF}.

\subsection{Discrete harmonic functions and martingales}

Let $(S^X_k, S^Y_k)_{k\geq 0}$ denote the cylinder walk on $\bbG_N$, as defined in~\eqref{def:SRW}. A function $\psi : \bbG_N \to \bbR$ is discrete harmonic if
	\[ \bbE \big[ \psi (S^X_1, S^Y_1) \,\big|\, (S^X_0, S^Y_0) = (x, y) \big] = \psi(x, y) \]
for every $(x, y) \in \bbG_N$.

Summing a discrete harmonic function over an IDLA cluster gives a martingale, provided the function decays sufficiently fast as $y \to -\infty$ for optional stopping to apply to the cylinder walk.

\begin{Lemma}[Harmonic martingale]\label{le:martingale}
Let $\psi : \bbG_N \to \bbR$ be a discrete harmonic function with the zero horizontal average property~\eqref{eq:zero_average} and exponential decay as $y \to -\infty$, let $(A(t))_{t \geq 0}$ be an IDLA process on $\bbG_N$ with $A(0) = R_0$, and let
	\[ M(t) \coloneqq \frac{1}{N}\,\laa D_t, \psi\raa = \frac{1}{N}\sum_{(x,y) \in A_+(t)} \psi(x, y) \qquad (t \geq 0)\,. \]
Then $(M(t))_{t \geq 0}$ is a martingale with respect to the natural filtration of the IDLA process.
\end{Lemma}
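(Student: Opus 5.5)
The plan is to compute the conditional increment $M(t+1)-M(t)$ and show it has zero conditional mean. Given $\cF_t$ (the natural filtration), we have
\[ M(t+1)-M(t) = \frac1N \psi(X_{t+1},Y_{t+1}), \]
where $(X_{t+1},Y_{t+1})$ is the exit location from $A(t)$ of a cylinder walk $S=(S^X,S^Y)$ started uniformly on $V_N\times\{0\}$, independent of $\cF_t$. Let $\tau$ be the exit time of $S$ from $A(t)$. It then suffices to show
\[ \bbE\big[\psi(S_\tau)\mid A(t)\big] = \bbE\big[\psi(S_0)\big] = \frac1N\sum_{x\in V_N}\psi(x,0) = 0, \]
where the last equality is the zero horizontal average property~\eqref{eq:zero_average}. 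The first equality is optional stopping for the martingale $\psi(S_{k\wedge\tau})$, which is a martingale because $\psi$ is discrete harmonic. We also need integrability: since $\psi$ is harmonic on the cylinder, $|\psi|$ is bounded on each layer, so $\bbE|M(t)|<\infty$ follows once each $A_+(t)$ is finite, which holds because $|A_+(t)|=t$.

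The main obstacle is justifying optional stopping. The set $A(t)$ is infinite downward, so $\tau$ is not bounded, and the walk may wander to very negative $y$. I would proceed in three steps.

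First, $\tau<\infty$ almost surely. The vertical coordinate is a recurrent lazy walk, and $A(t)\subseteq R_h$ for some finite $h$, since it is $R_0$ plus finitely many sites. Hence $S^Y$ reaches level $h+1$ in finite time, at which point $S$ has exited $A(t)$.

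Second, control $\psi$ along the path. I would use the exponential decay to write $|\psi(x,y)|\le C e^{cy}$ for $y\le 0$, together with $|\psi|\le C'$ on the finitely many layers $0\le y\le h+1$. Then
\[ \sup_{k\le\tau}|\psi(S_k)| \le C'' \Big(1+ \sup_{k\le\tau} e^{c\,S^Y_k}\Big) \le C''', \]
since $S^Y_k\le h+1$ before exit and $e^{cy}\le 1$ for $y\le 0$. The stopped martingale $\psi(S_{k\wedge\tau})$ is therefore uniformly bounded. Bounded convergence as $k\to\infty$, combined with $\tau<\infty$ a.s., gives $\bbE[\psi(S_\tau)]=\bbE[\psi(S_0)]$.

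The decay hypothesis is thus used only to ensure boundedness for $y\to-\infty$; for zero-average harmonic functions it is natural, since they are built from modes $e^{\pm c_k y}$. Finally, conditioning on $\cF_t$ via the independence of the $(t{+}1)$-st walk, and using the fact that the starting point is uniform on $V_N\times\{0\}$ (so the expectation over the start is the horizontal average at $y=0$, which vanishes), gives $\bbE[M(t+1)-M(t)\mid\cF_t]=0$.
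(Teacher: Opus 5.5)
Your proposal is correct and follows essentially the same route as the paper: optional stopping for $\psi(S_{k\wedge\tau})$, which is bounded given $A(t)$ because the pre-exit path stays in $R_{h+1}$ and $\psi$ decays as $y\to-\infty$, followed by the zero horizontal average at level $0$. One small repair: integrability of $M(t)$ should come from the deterministic bound $A_+(t)\subseteq V_N\times\{1,\ldots,t\}$ (each particle raises the maximal height by at most one), not from finiteness of $A_+(t)$ alone, since $\psi$ may grow exponentially as $y\to+\infty$.
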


\begin{proof}
Conditionally on $A(t)$, the cluster at time $t + 1$ differs from $A(t)$ by the single settlement site $(X_{t+1}, Y_{t+1}) \in \{y \geq 1\}$, so
	\[ M(t+1) - M(t) \;=\; \frac{1}{N}\,\psi(X_{t+1}, Y_{t+1})\,. \]
The particle follows a cylinder walk $(\widehat X_s, \widehat Y_s)_{s \geq 0}$ from $(\widehat X_0, \widehat Y_0)$ distributed as $\pi_N \otimes \d_0$, and settles at the first exit time $\t$ from $A(t)$. Since $A_+(t)$ is finite, $A(t)$ is contained in some half-cylinder $R_h$ with $h$ a finite random integer; and since $R_0 \subseteq A(t)$, the walker does not exit $A(t)$ below level $1$, so it exits at a site in $R_{h+1} \setminus R_0$ and $\tau < \infty$ almost surely. The pre-exit path stays in $A(t) \cup \{(X_{t+1}, Y_{t+1})\} \subseteq R_{h+1}$, on which $\psi$ is bounded: by exponential decay as $y \to -\infty$ and by its finite values at each site of the finite set $V_N \times \{1, \ldots, h+1\}$. The stopped process $(\psi(\widehat X_{s \wedge \tau}, \widehat Y_{s \wedge \tau}))_{s \geq 0}$ is therefore a bounded martingale by discrete harmonicity of $\psi$, and optional stopping gives
\[
  \bbE\big[ \psi(X_{t+1}, Y_{t+1}) \mid A(t) \big] \;=\; \bbE\big[\psi(\widehat X_0, 0) \mid A(t) \big] \;=\; \frac{1}{N}\sum_{x \in V_N} \psi(x, 0) \;=\; 0\,,
\]
where the last equality uses the zero horizontal average property.
\end{proof}

The specific $\psi$ used in the proof of Theorem~\ref{th:GFF} is given in~\eqref{eq:truepsi} below.

\subsection{Discrete harmonic extension away from a given height}
For the proof of Theorem~\ref{th:GFF} we will want to replace a generic test function $\varphi$ on $\bbG_N$ with a discrete harmonic one $\psi$, in such a way that $\psi \equiv \varphi$ on the line $\{y = T/N\}$, around which the cluster's fluctuations are concentrated. We now discuss how to build such a harmonic extension $\psi$.

Recall from~\eqref{eq:eigenvalues} that $(f_k^N)_{k=1}^N$ denotes an orthonormal system of eigenfunctions of $P_N$ with respect to the normalized inner product~\eqref{eq:product}, with corresponding eigenvalues $1 = \l_1(N) > \l_2(N) \geq \cdots \geq \l_N(N) \geq 0$. Throughout this subsection $N$ is fixed, and we abbreviate $\l_k \coloneqq \l_k(N)$. A generic test function $\varphi : \bbG_N \to \bbR$ with the zero horizontal average property~\eqref{eq:zero_average} can be written in this basis as
	\[
	\varphi(x, y) = \sum_{k=2}^N \a_k(y)\, f_k^N(x)
	\]
for some choice of functions $\alpha_k : \bbZ \to \bbR$ with $2 \leq k \leq N$ (the sum starts at $k = 2$ because orthogonality to the constant function $f_1^N \equiv 1$ is exactly the zero horizontal average condition).
\begin{Proposition} \label{pr:hextension}
For $2 \leq k \leq N$, let $q_k > 0$ be the positive real number defined by
	\begin{equation}\label{def:qk}
	 \cosh(q_k) = 2 - \l_k\,.
	 \end{equation}
(By irreducibility and laziness, $\l_k \in [0, 1)$ for $k \geq 2$, so $2 - \l_k \in (1, 2]$ and $q_k > 0$.) Then, for every $h \in \bbZ$ and every sequence $(\a_k(h))_{2 \leq k \leq N}$ of real numbers, the function
	\[ \psi(x, y) = \sum_{k=2}^N \a_k(h)\, f_k^N(x)\, e^{q_k(y - h)} \]
is discrete harmonic on $\bbG_N$, decays exponentially as $y \to -\infty$, and satisfies $\psi(x, h) = \sum_{k=2}^N \a_k(h)\,f_k^N(x)$ on the line $\{(x, h) : x \in V_N\}$.
\end{Proposition}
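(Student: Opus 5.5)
The proof is a direct separation-of-variables computation. By linearity of the cylinder kernel~\eqref{def:SRW} it suffices to check that each single mode
\[ \psi_k(x,y) \coloneqq f_k^N(x)\, e^{q_k(y-h)} \]
is discrete harmonic. The function $\psi$ is then a finite linear combination of harmonic functions, hence harmonic.

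For fixed $k$ we compute the one-step expectation from $(x,y)$ using~\eqref{def:SRW}:
\[
\bbE\big[\psi_k(S^X_1,S^Y_1)\mid (S^X_0,S^Y_0)=(x,y)\big]
= \tfrac14 f_k^N(x)\big(e^{q_k(y+1-h)}+e^{q_k(y-1-h)}\big) + \tfrac12 (P_N f_k^N)(x)\, e^{q_k(y-h)}.
\]
Since $P_N f_k^N = \l_k f_k^N$, this equals
\[ \psi_k(x,y)\Big(\tfrac12\cosh(q_k) + \tfrac12 \l_k\Big). \]
Harmonicity is therefore equivalent to $\tfrac12\cosh(q_k)+\tfrac12\l_k = 1$, that is, $\cosh(q_k) = 2-\l_k$, which is exactly the defining relation~\eqref{def:qk}.

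It remains to confirm that $q_k$ is well defined and positive. For $k\ge 2$ we have $\l_k\in[0,1)$: the strict inequality comes from irreducibility and the lower bound from laziness. Hence $2-\l_k\in(1,2]$, and $\cosh$ is a bijection from $(0,\infty)$ onto $(1,\infty)$, so there is a unique $q_k = \operatorname{arccosh}(2-\l_k) > 0$.

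The remaining two properties are immediate. For decay, each term is bounded by $|\a_k(h)|\,\|f_k^N\|_\infty e^{q_k(y-h)}$. Setting $q_{\min}=\min_{k\ge2} q_k>0$, a minimum over finitely many positive numbers, gives
\[ |\psi(x,y)| \le C e^{q_{\min}(y-h)} \quad\text{for } y\le h, \]
so $\psi$ decays exponentially as $y\to-\infty$. At $y=h$ every exponential factor equals $1$, which gives the stated boundary values $\psi(x,h)=\sum_{k=2}^N \a_k(h) f_k^N(x)$.

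There is no real obstacle here. The only point requiring care is the factor bookkeeping in the kernel: the vertical probability $\tfrac14$ in each direction together with the horizontal weight $\tfrac12$ is what produces the constant $2$ in~\eqref{def:qk}, rather than the $1/\l_k$ that a non-lazy split of the kernel would give.
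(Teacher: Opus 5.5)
Your proposal is correct and follows the paper's proof essentially line for line: apply the cylinder kernel mode by mode, then use $P_N f_k^N = \lambda_k f_k^N$ and $\tfrac12(e^{q_k}+e^{-q_k}) = \cosh q_k$ to get the multiplier $\tfrac12(\cosh q_k + \lambda_k)$, which equals $1$ by the defining relation for $q_k$. Your explicit bound $|\psi(x,y)| \le C e^{q_{\min}(y-h)}$ for $y \le h$ just spells out the decay claim that the paper states in one line.
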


\begin{proof}
The boundary identity $\psi(x, h) = \sum_k \a_k(h)\,f_k^N(x)$ is immediate, and exponential decay as $y \to -\infty$ follows from $q_k > 0$. To check discrete harmonicity, fix $(x, y) \in \bbG_N$ and apply the cylinder kernel~\eqref{def:SRW} to $\psi$:
\[
	\sum_{(x', y')} \cP_N\big((x, y); (x', y')\big)\, \psi(x', y')
	= \tfrac{1}{4}\,\psi(x, y+1) + \tfrac{1}{4}\,\psi(x, y-1) + \tfrac{1}{2} \sum_{x'} P_N(x, x')\, \psi(x', y)\,.
\]
Using $\tfrac{1}{2}(e^{q_k} + e^{-q_k}) = \cosh(q_k)$ and $\sum_{x'} P_N(x, x') f_k^N(x') = \l_k\, f_k^N(x)$, this equals
\[
	\sum_k \a_k(h)\, f_k^N(x)\, e^{q_k(y - h)} \cdot \tfrac{1}{2}\big( \cosh(q_k) + \l_k \big)\,.
\]
By~\eqref{def:qk}, $\cosh(q_k) + \l_k = 2$, so the sum equals $\psi(x, y)$.
\end{proof}

\subsection{Rescaling the vertical coordinate}

As in the statement of Theorem~\ref{th:GFF}, let $\phi : V_N \times \bbR \to \bbR$ be a test function of the form
	\[ \phi(x, y) = \sum_{k=2}^K \alpha_k(y)\, f_k^N(x) \]
for some finite integer $K \geq 2$, where each coefficient $\alpha_k$ is differentiable on a neighbourhood of $y_0$ with bounded derivative there (note that such a function satisfies the zero horizontal average property~\eqref{eq:zero_average}). We extend this to a function $\varphi$ on $\bbG_N$ by setting
	\[ \varphi(x, y) \coloneqq \phi\!\Big( x, \frac{y}{a_N} \Big) \]
for every $(x, y) \in \bbG_N$, with $a_N$ as in~\eqref{eq:aN}, so that $a_N \asymp \sqrt{\t_{rel}} \gg 1$ as $N \to \infty$. Thus
	\[ \varphi(x, y) = \sum_{k=2}^K \a_k\!\Big( \frac{y}{a_N} \Big)\, f_k^N(x) \]
for every $(x, y) \in \bbG_N$.

For each $2 \leq k \leq K$, define $q_k^N > 0$ by
	\begin{equation}\label{def:qkN}
	 \cosh\!\left( \frac{q_k^N}{a_N} \right) = 2 - \l_k\,.
	 \end{equation}
(By irreducibility, $\l_k < 1$ for $k \geq 2$, so $2 - \l_k > 1$ and $q_k^N > 0$.)
By Proposition~\ref{pr:hextension} (applied with the rescaled exponent $q_k^N/a_N$ in place of $q_k$), the function
	\begin{equation}\label{eq:truepsi}
	 \psi(x, y) \coloneqq \sum_{k=2}^K \a_k\!\big( \tfrac{T}{Na_N} \big)\, f_k^N(x)\, 
	 \exp \Big( \tfrac{q_k^N (y- \tfrac{T}{N} ) }{a_N } \Big) 
	 \end{equation}
is discrete harmonic on $\bbG_N$, decays as $y \to -\infty$, and satisfies $\psi \equiv \varphi$ on the line $\{(x, T/N) : x \in V_N\}$. By the modewise spectral hypothesis of Theorem~\ref{th:GFF}, for each $2 \leq k \leq K$ we have $q_k^N / a_N \to 0$ and $q_k^N \to \g_k$ as $N \to \infty$.


To replace $\varphi$ by $\psi$ in~\eqref{eq:thGFF}, we use an Efron--Stein variance inequality with a layerwise $L^2$ estimate, exploiting both the abelian property of IDLA and vertex-transitivity.

Realize IDLA from independent infinite walk trajectories
$\o = (\o_1,\ldots,\o_T)$,
where $\o_j$ starts from a point of $V_N\times\{0\}$ chosen according to $\pi_N \otimes \d_0$
and follows the cylinder walk forever.
The $j$-th particle settles at the first site of $\o_j$ outside the current cluster.
We use $A_+(T;\o)$ to denote the cluster obtained for the random walks $(\o_1,\ldots, \o_T)$.

\begin{Proposition}[Stability under trajectory swap]\label{pr:exchange}
If\/ $\o$ and $\widetilde\o$ differ in exactly one coordinate,
then $|A_+(T;\o)\triangle A_+(T;\widetilde\o)|\leq 2$.
\end{Proposition}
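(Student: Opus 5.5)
We will use the abelian property of IDLA (Diaconis--Fulton): the final cluster obtained by releasing the walks $\o_1,\ldots,\o_T$ does not depend on the order in which particles are released, nor on the order in which individual walk steps are performed. More precisely, we use the stack/odometer formulation. Each site carries the sequence of moves prescribed to the particles that visit it. The final occupied set is the unique stable configuration reached from $R_0$ plus $T$ particles at level $0$, whichever legal toppling order is used. In the trajectory formulation, any particle may be moved while it sits on an already-occupied site, and each site can hold one settled particle.

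By the abelian property we may reorder so that the differing coordinate is the last one, $j=T$. Let $A' := A_+(T-1;\o_1,\ldots,\o_{T-1})$. This set is common to both $\o$ and $\widetilde\o$, since both share the first $T-1$ trajectories and the reordering does not change the outcome. Then
\[
A_+(T;\o) = A'\cup\{z\}, \qquad A_+(T;\widetilde\o) = A'\cup\{\widetilde z\},
\]
where $z$ and $\widetilde z$ are the first exits of $\o_T$ and $\widetilde\o_T$ from $A'\cup R_0$. Hence the symmetric difference is contained in $\{z,\widetilde z\}$ and has at most two elements.

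The only delicate step is justifying the reordering in the trajectory model. There, particle $j$ follows its own fixed path $\o_j$, rather than site-indexed stacks. We will check that the law-level realization with independent trajectories is still abelian in this sense. The standard argument is that moving particle $j$ to its exit before or after other particles settle yields the same final set. The key observation is that a particle's settlement site depends only on its own trajectory and the set of occupied sites at the moment it runs. We prove this by an exchange argument on adjacent releases.

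Suppose particles $i$ and $i+1$ are swapped. If their exit sites from the common prior cluster $B$ differ, say $u$ and $v$, there are two cases:
\begin{itemize}
\item If $\o_{i+1}$'s first exit from $B\cup\{u\}$ is still $v$, the swap gives $B\cup\{u,v\}$ either way.
\item Otherwise $u=v$ as sets visited. In that case both orders give $B\cup\{u,w\}$, where $w$ is the relevant later exit. This point requires care: in the trajectory model, the two orders can assign different second sites.
\end{itemize}

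If pure trajectory-level abelianness fails, we fall back on the simpler direct argument, which does not need reordering. Run the process forward. The clusters for $\o$ and $\widetilde\o$ agree up to time $j-1$. At time $j$ they differ by one site each, so the symmetric difference is at most $2$. We then show inductively that the discrepancy stays of this form: $A=C\cup\{a\}$ and $\widetilde A=C\cup\{b\}$. When a common walker exits both clusters, either its exits coincide, or it exits at $b$ in one cluster and continues in the other. In the second case the new configurations are $C\cup\{a,b\}$ and $C\cup\{b,c\}$, which again has the form (common set)$\,\cup\,\{$one extra each$\}$. This single-swap invariant is the main step to verify. It gives $|A_+(T;\o)\triangle A_+(T;\widetilde\o)|\le 2$ for every $T$.
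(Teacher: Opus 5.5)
Your committed argument, the forward induction in your last paragraph, is correct, and it takes a genuinely different route from the paper. The paper's proof is your first paragraph: it invokes the Diaconis--Fulton abelian property to process $\omega_j$ last, so that the two clusters become $C\cup\{z_j\}$ and $C\cup\{\tilde z_j\}$.

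Your worry about that step is justified. When particles carry fixed trajectories, reordering is not a pathwise identity. Suppose $\omega_i$ and $\omega_{i+1}$ both first leave $B$ at $u$, and then first leave $B\cup\{u\}$ at $w$ and $v$ respectively, with $w\neq v$. The original order gives $B\cup\{u,v\}$, while the swapped order gives $B\cup\{u,w\}$. So reordering holds only in law, or pathwise in the site-indexed stack model. Your second bullet is false as written, so drop the exchange argument entirely.

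The forward induction needs no reordering. It controls exactly the fixed-order function of independent trajectories to which Efron--Stein is applied. To make it airtight:
\begin{enumerate}
\item State the invariant as $A=C\cup\{a\}$, $\widetilde A=C\cup\{b\}$ with $a,b\notin C$. When $a=b$ the two runs coincide from then on.
\item Start the induction at time $j$, with $C$ the common cluster at time $j-1$.
\item Split on the first site $p$ at which the next walker leaves $C$. Before that time its path lies in $C$, hence in both clusters, so both runs see the same path.
\begin{itemize}
\item If $p\notin\{a,b\}$, both runs settle at $p$.
\item If $p=a\neq b$, the $\widetilde\omega$-run settles at $a$, while the $\omega$-run continues to its first exit $c$ from $C\cup\{a\}$. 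This gives common set $C\cup\{a\}$ with extras $c$ and $b$ (possibly $c=b$).
\item The case $p=b$ is symmetric.
\end{itemize}
\item Exits are a.s.\ finite, since each cluster is $R_0$ plus finitely many sites and the vertical coordinate is recurrent.
\item Both clusters contain $R_0$, so the bound transfers to $A_+$.
\end{enumerate}

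The paper's route is a one-liner given a reordering principle. Yours is self-contained and valid pathwise for the model actually used.
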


\begin{proof}
Suppose $\o$ and $\widetilde\o$ differ only in coordinate $j$. By the abelian property~\cite{diaconis1991growth}, we may process trajectory $j$ last without changing the final cluster. After processing the $T - 1$ shared trajectories $\o_1, \ldots, \o_{j-1}, \o_{j+1}, \ldots, \o_T$, the two clusters agree; call the common result $C$. Processing the last trajectory then gives
	\[ A(T;\o) = C \cup \{z_j\}\,, \qquad A(T;\widetilde\o) = C \cup \{\tilde z_j\}\,, \]
where $z_j \notin C$ is the first exit of $\o_j$ from $C$ and $\tilde z_j \notin C$ is the first exit of $\widetilde\o_j$ from $C$. Since $C \supseteq R_0$, both $z_j$ and $\tilde z_j$ lie in $\{y \geq 1\}$, so $A_+(T;\o) \triangle A_+(T;\widetilde\o) \subseteq \{z_j, \tilde z_j\}$.
\end{proof}

\begin{Lemma}\label{le:replace}
Fix $y_0 > 0$ and $\n > 0$, and assume $a_N \gg \log N + \log_+ a_N$ as $N \to \infty$ and that Assumption~\ref{a:spectral} holds for each $2 \leq k \leq K$. Let $T \coloneqq N \lfloor a_N y_0\rfloor$, $h \coloneqq T/N$, and let $(\Delta_N)_{N \geq 1}$ be positive reals satisfying $\Delta_N = O\!\left(\sqrt{a_N(\log N + \log_+ a_N)}\right)$ and
	\[ \bbP\big(R_{\lfloor h - \Delta_N\rfloor} \subseteq A(T) \subseteq R_{h + \Delta_N}\big) \geq 1 - N^{-\n} \qquad \text{for } N \text{ large enough}\,. \]
Let $\phi : V_N \times \bbR \to \bbR$ be a test function of the form $\phi(x, y) = \sum_{k=2}^K \a_k(y)\, f_k^N(x)$, where each $\a_k : \bbR \to \bbR$ is independent of $N$ and has bounded derivative in a neighbourhood of $y_0$; let $\varphi(x, y) \coloneqq \phi(x, y/a_N)$, and let $\psi$ be the discrete harmonic extension of $\varphi$ defined in~\eqref{eq:truepsi}. Then
	\[ \frac{1}{\sqrt{Na_N}}\,\laa D_T, \varphi - \psi\raa \;\longrightarrow\; 0 \qquad \text{in probability as } N \to \infty\,. \]
\end{Lemma}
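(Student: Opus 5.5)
The plan is to split $\laa D_T, \varphi-\psi\raa$ into a part living in a thin window around height $h = T/N$ and a remainder that vanishes with high probability. I will show that the window part has mean exactly zero, by vertex-transitivity, and then bound its variance by Efron--Stein using Proposition~\ref{pr:exchange}. Set $g \coloneqq \varphi - \psi$, which has the zero horizontal average property, and fix the window $W \coloneqq V_N \times \{ y : |y - h| \leq \Delta_N + 1\}$. Define
\[ G \coloneqq \sum_{(x,y) \in A_+(T)} g(x,y)\,\mathbf{1}_W(x,y)\,. \]
On the good event $\{R_{\lfloor h-\Delta_N\rfloor} \subseteq A(T) \subseteq R_{h+\Delta_N}\}$, every layer outside $W$ is either completely filled in both $A(T)$ and $R_h$, or empty in both. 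Full layers contribute zero by~\eqref{eq:zero_average}, so $\laa D_T, g\raa = G$ there. Since the good event has probability at least $1-N^{-\n}$, it suffices by Chebyshev to show $\bbE[G]=0$ and $\var(G) = o(N a_N)$.

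\emph{Mean.} Any automorphism $\sigma$ of $V_N$ acts on $\bbG_N$ by $(x,y)\mapsto(\sigma x, y)$. It preserves the cylinder kernel~\eqref{def:SRW} and the uniform starting law, so the law of $A(T)$ is $\sigma$-invariant. By transitivity, $\bbP((x,y)\in A(T))$ therefore depends only on $y$. Hence $\bbE[G] = \sum_{|y-h|\leq \Delta_N+1,\,y\ge1} \bbP((\cdot,y)\in A(T)) \sum_x g(x,y) = 0$.

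\emph{Variance.} Realize $A(T)$ from the independent trajectories $\o_1,\dots,\o_T$ and apply Efron--Stein, $\var(G) \leq \tfrac12\sum_j \bbE[(G(\o)-G(\o^{(j)}))^2]$, where $\o^{(j)}$ resamples the $j$-th coordinate. By Proposition~\ref{pr:exchange} and its proof, $|G(\o)-G(\o^{(j)})| \leq |g\mathbf 1_W(z_j)| + |g\mathbf 1_W(\tilde z_j)|$. Here $z_j$ is the exit point of $\o_j$ from the cluster $C$ built by the other $T-1$ walks, and $\tilde z_j$ is defined likewise for the resampled walk. Each of $z_j$ and $\tilde z_j$ has the same law as the last-added site in IDLA with $T$ particles. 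That law is again invariant under horizontal automorphisms, so $\bbP(z_j = (x,y))$ is constant in $x$. This gives the layerwise reduction
\[ \bbE\big[g\mathbf 1_W(z_j)^2\big] \leq \max_{|y-h|\leq\Delta_N+1} \frac1N\sum_{x} g(x,y)^2 = \max_{|y-h|\le \Delta_N+1} \sum_{k=2}^K \Big(\a_k\big(\tfrac{y}{a_N}\big) - \a_k\big(\tfrac{h}{a_N}\big)e^{q_k^N(y-h)/a_N}\Big)^2, \]
where the equality uses orthonormality of the $f_k^N$.

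Next I bound this layerwise quantity. Since $\Delta_N/a_N \to 0$ (because $a_N \gg \log N+\log_+ a_N$), the window in rescaled height shrinks to $y_0$, and $h/a_N\to y_0$. For large $N$ the bounded-derivative hypothesis therefore applies. Together with $q_k^N\to\g_k$, it gives that each term is $O((|y-h|/a_N)^2) = O(\Delta_N^2/a_N^2)$, and $K$ is fixed. Hence
\[ \var(G) \leq C\,T\,\Delta_N^2/a_N^2 \leq C\,N a_N y_0 \cdot a_N(\log N+\log_+ a_N)/a_N^2 = C N(\log N + \log_+ a_N). \]
Dividing by $Na_N$ gives $O((\log N+\log_+ a_N)/a_N)\to0$, which concludes the argument.

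The step I expect to need the most care is the invariance claim for the exit point $z_j$ from a random cluster: it is what turns a pointwise bound (hopeless, since $f_k^N$ may be large at some vertices) into an $L^2$ average over each layer. I would verify it by a coupling: apply $\sigma$ simultaneously to all trajectories, which preserves their joint law and commutes with the IDLA construction. Cutting off by $\mathbf 1_W$ inside $G$ is what makes the Efron--Stein increments controlled on every sample, not only on the good event.
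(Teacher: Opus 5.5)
Your proposal is correct and is essentially the paper's proof. You restrict to a window of layers around $h$ on the good event, the mean vanishes exactly by horizontal automorphism invariance, and Efron--Stein with Proposition~\ref{pr:exchange}, plus vertex-transitivity and orthonormality, gives the layerwise variance bound $O(T\Delta_N^2/a_N^2)$. The only difference is that you bound the increments via the exit points $z_j,\tilde z_j$ from a common cluster, which relies on the ``process trajectory $j$ last'' rearrangement; for per-particle trajectories that rearrangement is only guaranteed in law, not pathwise. The paper instead uses only $|D_j|\le 2$ for $D_j = A_+(T;\o)\triangle A_+(T;\widetilde\o)$ and the $\bar\gamma$-invariance of the law of $D_j$, which is exactly what your closing equivariance argument delivers, so you should phrase the layerwise reduction in terms of $D_j$.
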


\begin{proof}
By the hypothesis, the good event
	\[ E \coloneqq \{ R_{\lfloor h - \Delta_N\rfloor} \subseteq A(T) \subseteq R_{h + \Delta_N} \} \]
satisfies $\bbP(E^c) \leq N^{-\n}$ for $N$ large enough. Set $F_N \coloneqq R_{h + \Delta_N} \setminus R_{\lfloor h - \Delta_N\rfloor}$, a union of full horizontal layers contained in $\{y \geq 1\}$ for $N$ large (since $h \asymp y_0 a_N$ and $\Delta_N = o(a_N)$). On $E$ we have $A(T) \triangle R_h \subseteq F_N$. Both $\varphi$ and $\psi$ are linear combinations of $\{f_k^N\}_{k \geq 2}$, hence have zero horizontal average on every layer, so $\sum_{x \in V_N}(\varphi - \psi)(x, y) = 0$ for every $y \in \bbZ$. This makes $\laa D_T, \varphi - \psi\raa$ insensitive to full horizontal layers: the contribution from $R_h$ vanishes layer by layer, and the contribution from layers of $A(T)$ at heights $\leq \lfloor h - \Delta_N\rfloor$ also vanishes (these are full on $E$). The remaining contribution comes from $A(T) \cap F_N$, giving
	\[ \laa D_T, \varphi - \psi \raa
	= \sum_{(x,y) \in A_+(T)} (\varphi - \psi)(x,y) \cdot \mathbf{1}_{F_N}(x,y)
	\qquad \text{on } E\,. \]
Write
	\[ \Phi_N(x, y) \coloneqq (\varphi - \psi)(x, y) \cdot \mathbf{1}_{F_N}(x, y)\,, \]
	\[ S_N \coloneqq \frac{1}{\sqrt{Na_N}} \sum_{(x, y) \in A_+(T)} \Phi_N(x, y)\,, \qquad
	Z_N \coloneqq \frac{1}{\sqrt{Na_N}}\,\laa D_T, \varphi - \psi\raa\,. \]
Then $Z_N \mathbf{1}_E = S_N \mathbf{1}_E$. We show below that $\bbE[S_N^2] \to 0$; since $\bbP(E^c) \to 0$, this implies that for every $\varepsilon > 0$,
\begin{equation}\label{eq:Zn-decomp}
	\bbP\big(|Z_N| > \varepsilon\big) \;\leq\; \bbP(E^c) + \bbP\big(|S_N| > \varepsilon\big)
	\;\leq\; \bbP(E^c) + \varepsilon^{-2}\,\bbE[S_N^2] \;\longrightarrow\; 0\,,
\end{equation}
so $Z_N \to 0$ in probability.

Expand $\varphi - \psi$ in the eigenbasis: with
	\begin{equation}\label{eq:betak}
	\b_k(y) \coloneqq \a_k \big( \tfrac{y}{a_N} \big) - \a_k \big( \tfrac{T}{Na_N} \big) \, \exp \Big( \tfrac{q_k^N (y - \tfrac{T}{N} ) }{ a_N } \Big) \,,
	\end{equation}
we have $(\varphi - \psi)(x,y) = \sum_{k=2}^K \b_k(y)\, f_k^N(x)$. Note that $\b_k(h) = 0$, and the mean value theorem gives
	\begin{equation}\label{eq:betabd}
	|\b_k(y)| \leq C\,\frac{\Delta_N}{a_N} \qquad \text{for } |y - h| \leq \Delta_N\,,
	\end{equation}
where $C$ denotes a constant depending on $K$, the $\a_k$'s, and the $\g_k$'s, but not on $N$; $C$ may change from line to line.

Since $F_N$ is a union of full horizontal layers, $\Phi_N(\cdot, y) = 0$ whenever the layer at height $y$ lies outside $F_N$. For layers inside $F_N$, orthonormality of $\{f_k^N\}$ in the normalized inner product~\eqref{eq:product} gives
	\[ \frac{1}{N} \sum_{x \in V_N} |(\varphi - \psi)(x, y)|^2
	= \sum_{k=2}^K |\b_k(y)|^2\,. \]
Together with~\eqref{eq:betabd}, this yields the layerwise $L^2$ bound
	\begin{equation}\label{eq:layerL2}
		\sup_{y \in \bbZ}\, \frac{1}{N} \sum_{x \in V_N} |\Phi_N(x, y)|^2
		\;\leq\; C \cdot \Big(\frac{\Delta_N}{a_N}\Big)^2\,.
	\end{equation}

For each integer $1 \leq j \leq T$, obtain $\widetilde\o$ from $\o$ by replacing the $j$-th trajectory with an independent copy $\widetilde\o_j$, and write
	\[ D_j \coloneqq A_+(T;\o) \triangle A_+(T;\widetilde\o)\,. \]
By Proposition~\ref{pr:exchange}, $|D_j| \leq 2$ pathwise. Cauchy--Schwarz then gives
	\begin{equation}\label{eq:CS}
	\Big( \sum_{(x,y) \in A_+(T;\o)} \!\!\Phi_N(x,y) \;-\!\! \sum_{(x,y) \in A_+(T;\widetilde\o)} \!\!\Phi_N(x,y) \Big)^2
	\;\leq\; 2 \sum_{u \in D_j} |\Phi_N(u)|^2\,.
	\end{equation}

We use vertex-transitivity to bound the expectation of the right-hand side of~\eqref{eq:CS}. For every automorphism $\g$ of $V_N$, the lift $\bar\g(x, y) \coloneqq (\g x, y)$ preserves the cylinder kernel~\eqref{def:SRW} and the start distribution $\pi_N \otimes \d_0$. By induction on the particle index, $A_+(T; \bar\g \o) = \bar\g\, A_+(T; \o)$ pathwise. Since resampling commutes with applying $\bar\g$ coordinatewise, the joint law of $(\o, \widetilde\o)$ is $\bar\g$-invariant, and so is the law of $D_j$. By transitivity, $\bbP\bigl((x, y) \in D_j\bigr)$ depends only on $y$. Writing this common value as $r_j(y)$ and using $\sum_y N\,r_j(y) = \bbE|D_j| \leq 2$, we obtain
	\begin{equation}\label{eq:layeravg}
	 \bbE\!\bigg[ \sum_{u \in D_j} |\Phi_N(u)|^2 \bigg]
	= \sum_y N\,r_j(y) \cdot \frac{1}{N}\sum_{x} |\Phi_N(x,y)|^2
	\leq 2 \sup_y \frac{1}{N}\sum_{x} |\Phi_N(x,y)|^2\,.
	\end{equation}
The Efron--Stein inequality, combined with~\eqref{eq:CS},~\eqref{eq:layeravg}, and~\eqref{eq:layerL2}, gives
	\begin{equation}\label{eq:varSN}
	\mathrm{Var}\!\Big(\sum_{(x,y) \in A_+(T)} \Phi_N\Big)
	\;\leq\; 2\, T \sup_y \frac{1}{N}\sum_{x} |\Phi_N(x,y)|^2
	\;\leq\; C\, T\, \Big(\frac{\Delta_N}{a_N}\Big)^2\,.
	\end{equation}
Dividing by $Na_N$, using $T \asymp Na_N$ and $\Delta_N^2 = O(a_N(\log N + \log_+ a_N))$, this yields $\mathrm{Var}(S_N) = O\!\big((\log N + \log_+ a_N)/a_N\big) = o(1)$.

It remains to show that the mean vanishes. By the same horizontal automorphism invariance applied to $A(T)$ alone, $\bbP\bigl((x, y) \in A(T)\bigr)$ depends only on $y$. Since each $f_k^N$ with $k \geq 2$ satisfies $\sum_{x \in V_N} f_k^N(x) = 0$ (orthogonality to $f_1^N \equiv 1$), and since $F_N$ is a union of full horizontal layers, $\sum_{x \in V_N} \Phi_N(x, y) = 0$ for every $y \in \bbZ$. Therefore
	\[ \bbE\!\bigg[ \sum_{(x,y) \in A_+(T)} \Phi_N(x,y) \bigg]
	\;=\; \sum_{y} \bbP\bigl((\cdot, y) \in A(T)\bigr) \sum_{x \in V_N} \Phi_N(x, y)
	\;=\; 0\,, \]
so $\bbE[S_N] = 0$ exactly. Combining, $\bbE[S_N^2] = \mathrm{Var}(S_N) = o(1)$. Together with~\eqref{eq:Zn-decomp}, this gives $Z_N \to 0$ in probability as $N \to \infty$.
\end{proof}

This shows that, in order to prove Theorem~\ref{th:GFF}, it suffices to prove that the sequence of random variables
	\[ \frac{1}{\sqrt{Na_N}} \laa D_T, \psi\raa
	= \frac{1}{\sqrt{Na_N}} \sum_{(x,y) \in A_+(T)} \psi(x, y) \]
converges in distribution to a centred Gaussian random variable as $N \to \infty$. The equality between the two expressions holds because $\psi$ has zero horizontal average on every level (it is a finite linear combination of the eigenfunctions $f_k^N$ for $k \geq 2$, which are orthogonal to $f_1^N \equiv 1$): each complete horizontal layer in $R_{T/N}\setminus R_0$ contributes zero to the pairing, so
	\[ \laa D_T, \psi\raa = \sum_{(x,y)\in A_+(T)} \psi(x, y) - \sum_{(x,y) \in R_{T/N}\setminus R_0} \psi(x, y) = \sum_{(x,y) \in A_+(T)} \psi(x, y)\,. \]
Moreover, the process $t \mapsto (Na_N)^{-1/2}\sum_{(x,y)\in A_+(t)} \psi(x, y)$ is a martingale with respect to the natural filtration of the IDLA process. Indeed, $\psi$ is discrete harmonic by Proposition~\ref{pr:hextension}, has zero horizontal average, and decays exponentially as $y \to -\infty$. The exponential decay is the admissibility condition of Lemma~\ref{le:martingale}, and it holds because $q_k^N > 0$ for every $2 \leq k \leq K$ by irreducibility.

\medskip 

\section{Proof of Theorem \ref{th:GFF}}\label{sec:proof_GFF}
In this section we prove Theorem~\ref{th:GFF}, assuming Theorem~\ref{th:max_fluct} and Proposition~\ref{pr:apriori}. Theorem~\ref{th:max_fluct} itself is proved later in Section~\ref{sec:max_fluct}. 

Fix $\n > 0$ throughout this section.

Since $V_N$ is connected and vertex-transitive, its relaxation time is polynomial in $N$ (for instance, $\t_{rel} = O(N^2)$ by a standard diameter estimate), so $a_N \asymp \sqrt{\t_{rel}}$ is polynomial in $N$ and $T = N\lfloor y_0 a_N\rfloor \leq N^m$ for some fixed integer $m$ and all large enough $N$. We apply Theorem~\ref{th:max_fluct} at this $m$ throughout.

Let $\phi : V_N \times \bbR \to \bbR$ be a test function of the form
$$ \phi(x, y) = \sum_{k=2}^K \alpha_k(y)\, f_k^N(x)\,, $$
as in the statement of Theorem~\ref{th:GFF}, with each $\alpha_k$ differentiable with bounded derivative on a neighbourhood of $y_0$. Let $\varphi(x, y) \coloneqq \phi(x, y/a_N)$ and let $\psi$ be its discrete harmonic extension from the line $y = T/N$ as in~\eqref{eq:truepsi}. Note that $T/N \asymp a_N$ and $T^\sharp/N = \sqrt{\t_{mix}}\,(\log N)^2 \gg a_N$ (using $\t_{mix} \geq c(\t_{rel} - 1)$ for an absolute constant $c > 0$, see~\cite{levin2017markov}), so $T \leq T^\sharp$ for $N$ large and the width in Theorem~\ref{th:max_fluct} reduces to $\Delta_N = O(\sqrt{a_N(\log N + \log_+ a_N)}) = o(a_N)$ under our standing assumption $a_N \gg \log N + \log_+ a_N$. Therefore Lemma~\ref{le:replace} applies, and $(Na_N)^{-1/2}\laa D_T, \varphi - \psi\raa \to 0$ in probability. By Slutsky's theorem, it therefore suffices to show that
	\[  \frac{1}{\sqrt{N a_N}} \laa D_T, \psi \raa = \frac{1}{\sqrt{N a_N}} \sum_{(x,y) \in A_+(T)} \psi (x,y)\]
converges to a centred Gaussian random variable with variance $\sigma^2(\phi)$ from Theorem~\ref{th:GFF}; the equality uses the zero horizontal average of $\psi$. To this end we use the following version of the Martingale Central Limit Theorem due to McLeish~\cite{mcleish1974dependent}.

\begin{Theorem}[\cite{mcleish1974dependent}, Theorem 2.3]
\label{thm:martingale-CLT}
Let $(\cX_{k,n})_{k\leq t_n } $ be a martingale difference array with respect to $(\cF_{k,n} )_{k\leq t_n }$. Assume that: 
\begin{itemize}
\item[(a)] $\displaystyle \bbE \Big( \max_{k\leq t_n} \cX_{k,n}^2 \Big) $ is uniformly bounded in $n$, 
\item[(b)] $\displaystyle \max_{k\leq t_n} | \cX_{k,n} | \to 0 $ in probability as $n\to\infty$, 
\item[(c)] $\displaystyle \sum_{k=1}^{t_n} \cX_{k,n}^2 \to \sigma^2 $ 
in probability as $n\to\infty$, for some $\sigma^2 \in (0,\infty )$.
\end{itemize}
Then, $\displaystyle \sum_{k=1}^{t_n} \cX_{k,n} \to \cN (0,\sigma^2)$ in distribution as $n\to\infty$. 
\end{Theorem}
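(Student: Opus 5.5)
The plan is to follow McLeish's product argument via characteristic functions. Write $S_n \coloneqq \sum_{k\le t_n} \cX_{k,n}$ and fix $t \in \bbR$. It suffices to show $\bbE[e^{itS_n}] \to e^{-t^2\sigma^2/2}$ and then invoke Lévy's continuity theorem.

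\emph{Algebraic decomposition.} I will use the elementary expansion $e^{ix} = (1+ix)\exp\big(-\tfrac{x^2}{2} + r(x)\big)$, where $|r(x)| \le |x|^3$ for $|x|$ small. Setting
\[ T_n \coloneqq \prod_{k\le t_n}(1+it\cX_{k,n}), \qquad U_n \coloneqq \exp\Big(-\tfrac{t^2}{2}\sum_k \cX_{k,n}^2 + \sum_k r(t\cX_{k,n})\Big), \]
we get the identity $e^{itS_n} = T_n U_n$. Since $(\cX_{k,n})$ is a martingale difference array, conditioning successively on $\cF_{t_n-1,n}, \ldots, \cF_{0,n}$ gives $\bbE[T_n] = 1$. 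This requires $T_n$ to be integrable, which the truncation below guarantees. By (b), $\max_k|t\cX_{k,n}| \to 0$ in probability. Hence, with probability tending to one, $\sum_k |r(t\cX_{k,n})| \le |t|^3 \max_k|\cX_{k,n}| \sum_k \cX_{k,n}^2$, and this tends to $0$ in probability by (b) and (c). Using (c) again, $U_n \to e^{-t^2\sigma^2/2}$ in probability. Therefore
\[ \bbE[e^{itS_n}] - e^{-t^2\sigma^2/2} = \bbE\big[T_n\,(U_n - e^{-t^2\sigma^2/2})\big], \]
and the integrand tends to $0$ in probability. It remains to justify taking limits inside the expectation, i.e.\ to show that $T_n$ is uniformly integrable. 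Then so is $T_n(U_n - e^{-t^2\sigma^2/2}) = e^{itS_n} - e^{-t^2\sigma^2/2}T_n$, because its first term has modulus $1$.

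\emph{Truncation (the main obstacle).} Uniform integrability of $T_n$ fails in general, since $|T_n|^2 = \prod_k(1+t^2\cX_{k,n}^2) \le \exp(t^2\sum_k\cX_{k,n}^2)$ is not controlled. I would fix this with a stopping-time truncation. Let $J_n \coloneqq \min\{k : \sum_{j\le k}\cX_{j,n}^2 > 2\sigma^2\}$, with $J_n = t_n$ if no such $k$ exists. Replace $\cX_{k,n}$ by $\cX'_{k,n} \coloneqq \cX_{k,n}\mathbf 1\{k\le J_n\}$. Because $\{k\le J_n\} \in \cF_{k-1,n}$, the array $(\cX'_{k,n})$ is still a martingale difference array, and it satisfies (a) and (b). By (c), $\bbP(J_n < t_n) \to 0$, so $S'_n = S_n$ with probability tending to one, and $\sum_k \cX'^2_{k,n} \to \sigma^2$ in probability. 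It therefore suffices to prove the result for $\cX'$. For the truncated array, every index $k < J_n$ has partial sum $\le 2\sigma^2$, so
\[ |T'_n|^2 \le \exp(2t^2\sigma^2)\,(1 + t^2 \cX_{J_n,n}^2) \le \exp(2t^2\sigma^2)\,\big(1 + t^2\max_k \cX_{k,n}^2\big). \]
By (a), $\sup_n \bbE|T'_n|^2 < \infty$. This gives integrability of $T'_n$ (so $\bbE T'_n = 1$ is justified) and its uniform integrability.

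\emph{Conclusion.} With uniform integrability in hand, convergence in probability upgrades to $L^1$ convergence, so $\bbE[e^{itS'_n}] \to e^{-t^2\sigma^2/2}$ for every $t$. Since $S_n - S'_n \to 0$ in probability, Slutsky's theorem gives $S_n \Rightarrow \cN(0,\sigma^2)$. The delicate step is the truncation. One must check that $J_n$ is predictable in the right sense and that the only index which can push the partial sum over $2\sigma^2$ contributes a single factor, bounded by $1 + t^2\max_k\cX_{k,n}^2$. This is exactly where hypothesis (a) enters.
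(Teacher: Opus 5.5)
The paper does not prove this statement but quotes it from McLeish~\cite{mcleish1974dependent}; your argument is correct and is essentially McLeish's original proof. Its two ingredients are the factorization $e^{itS_n}=T_nU_n$ with $\bbE T_n=1$, and the predictable truncation $\cX'_{k,n}=\cX_{k,n}\mathbf{1}\{\sum_{j<k}\cX_{j,n}^2\le 2\sigma^2\}$, which makes $T'_n$ bounded in $L^2$ precisely through hypothesis (a). The one detail worth writing out is that the same $L^2$ bound holds for the partial products $\prod_{k\le m}(1+it\cX'_{k,n})$, and this is what justifies the successive conditioning that gives $\bbE T'_n=1$.
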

We apply Theorem~\ref{thm:martingale-CLT} with row index $N$ in place of $n$. Set $t_N \coloneqq T = N\lfloor y_0 a_N\rfloor$, $\cF_{k,N} \coloneqq \cF_k$ where $\cF_k = \sigma(A(0), \ldots, A(k))$ is the natural filtration of the IDLA process, and
	\[ \cX_{k,N} \coloneqq \frac{1}{\sqrt{Na_N}}\, \psi(X_k, Y_k)\,, \qquad 1 \leq k \leq t_N\,, \]
where $(X_k, Y_k) \in V_N \times \bbZ$ is the settling location of the $k$-th walker, that is, the unique vertex with $A(k) \setminus A(k-1) = \{ (X_k, Y_k) \}$. Then
	\[ M_N(t) \coloneqq \sum_{k=1}^{t} \cX_{k,N}
	= \frac{1}{\sqrt{Na_N}} \sum_{(x,y) \in A_+(t)} \psi(x,y) \]
defines a martingale indexed by integer times $t \in \{0, 1, \ldots, t_N\}$ by the martingale statement of Section~\ref{sec:preliminaries}, applied to the function $\psi$ from~\eqref{eq:truepsi}: $\psi$ is discrete harmonic (Proposition~\ref{pr:hextension}), has zero horizontal average, and decays exponentially as $y \to -\infty$ (since every $q_k^N > 0$ for $2 \leq k \leq K$), which is the admissibility condition needed for optional stopping. We must verify the assumptions~(a),~(b),~(c) of Theorem~\ref{thm:martingale-CLT} for the array $(\cX_{k,N})$.

Define
$$ Q_N \coloneqq \sum_{k=1}^{t_N} \cX_{k,N}^2 =
\frac{1}{N a_N} \sum_{k=1}^T | \psi(X_k,Y_k) |^2 =
\frac{1}{N a_N} \sum_{(x,y) \in A_+(T) } |\psi(x,y)|^2\,, $$
$$ W_N \coloneqq \frac{1}{N a_N} \sum_{(x,y) \in R_{T/N} \setminus R_0 } | \psi(x,y)|^2  $$
and
\[ \sigma^2 (\phi )  = \sum_{k=2}^K \alpha_k(y_0)^2 \bigg( \frac{1-e^{-2\gamma_k y_0}}{2\gamma_k} \bigg). \]

Let $\Delta_N$ be the width from Theorem~\ref{th:max_fluct} applied at $T = N\lfloor y_0 a_N\rfloor$; under the standing assumption $a_N \gg \log N + \log_+ a_N$, the square-root term dominates the logarithmic term, so $\Delta_N = O(\sqrt{a_N(\log N + \log_+ a_N)}) = o(a_N)$. The good event
	\[ E \coloneqq \big\{ R_{\lfloor T/N - \Delta_N\rfloor} \subseteq A(T) \subseteq R_{T/N + \Delta_N} \big\} \]
satisfies $\bbP(E) \geq 1 - N^{-\n}$ for $N$ large enough. Since $T/(N\log N) = \lfloor y_0 a_N\rfloor / \log N \to \infty$, the $N$-dependent form of Proposition~\ref{pr:apriori}-(b) (Remark~\ref{rk:apriori-Ndep}) applies at time $T$, and we use it freely in what follows.

As a first step, we show in Lemma~\ref{clm:Q on Ec} that the bad event $E^c$ contributes negligibly to $Q_N$ in $L^1$.

\begin{Lemma}
\label{clm:Q on Ec}
We have $\bbE(Q_N \mathbf{1}_{E^c}) \to 0$ as $N \to \infty$.
\end{Lemma}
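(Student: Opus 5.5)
We bound $Q_N$ pointwise by a deterministic function of the maximal height of the cluster. Then we split $E^c$ according to whether the cluster is moderately or abnormally tall, using Proposition~\ref{pr:apriori} (in the form of Remark~\ref{rk:apriori-Ndep}) for the tall part and the bound $\bbP(E^c)\le N^{-\n}$ for the rest.

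\textbf{Step 1: a pointwise bound on $\psi$.} By orthonormality, $|f_k^N(x)|^2\le N$ for every $x$, so $\|f_k^N\|_\infty\le \sqrt N$. For $y\ge 1$ and $2\le k\le K$ we have $q_k^N\to\g_k$ and $y-T/N\le y$. Hence, with $C$ depending on $K$, the $\a_k(y_0)$ and the $\g_k$,
\[ |\psi(x,y)|^2 \le C\,N \exp\!\Big(\tfrac{2\bar q\,(y-T/N)_+}{a_N}\Big), \qquad \bar q\coloneqq \max_{k\le K}\sup_N q_k^N<\infty. \]
Let $H\coloneqq \max\{y:(x,y)\in A(T)\}$. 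Since $|A_+(T)|=T$, this gives
\[ Q_N \le \frac{1}{Na_N}\, T\, C N \exp\!\Big(\tfrac{2\bar q (H-T/N)_+}{a_N}\Big) \le C' N \exp\!\Big(\tfrac{2\bar q H}{a_N}\Big), \]
using $T\asymp N a_N$.

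\textbf{Step 2: splitting $E^c$.} Fix a large constant $\b_0>e^2$ and write $E^c\subseteq (E^c\cap\{H\le \b_0T/N\})\cup\{H>\b_0T/N\}$.

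On the first event, $H\le \b_0 y_0 a_N+\b_0$, so $Q_N\le C'' N$ with $C''$ independent of $N$. Therefore
\[ \bbE\big(Q_N\mathbf 1_{E^c\cap\{H\le \b_0T/N\}}\big)\le C''N\cdot N^{-\n}\to0, \]
provided we choose $\n>1$. This is allowed: Theorem~\ref{th:max_fluct} holds for every $\n$, so we may fix $\n=2$ at the start of the section.

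For the tall part, sum over dyadic-type shells $\{\b_j T/N<H\le \b_{j+1}T/N\}$ with $\b_j=\b_0+j$. Remark~\ref{rk:apriori-Ndep} applies since $T/(N\log N)\to\infty$, giving $\bbP(H>\b_jT/N)\le \exp(-\b_j T/N)$. On the $j$-th shell,
\[ Q_N\le C'N\exp\!\big(2\bar q\,\b_{j+1}T/(Na_N)\big)\le C'N\exp\!\big(2\bar q\,\b_{j+1}(y_0+o(1))\big). \]
The contribution of shell $j$ is therefore at most
\[ C'N\exp\!\Big(2\bar q\b_{j+1}y_0' - \b_j y_0 a_N\Big), \]
where $y_0'$ is slightly larger than $y_0$ and $T/N\ge y_0a_N-1$. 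Since $a_N\to\infty$, for $N$ large we have $\b_j y_0 a_N\ge 4\bar q\b_{j+1}y_0'+\b_j\log N$. The sum over $j$ is then bounded by $C'N\sum_j N^{-\b_j}\to0$. Alternatively, one can integrate the tail bound $\bbE[e^{cH}\mathbf 1_{H>\b_0T/N}]$ directly via Remark~\ref{rk:apriori-uniform}; the shell version is cleaner because Remark~\ref{rk:apriori-Ndep} is stated for $N$-dependent $\b_N$.

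\textbf{Main obstacle.} The delicate point is the tall part. The exponential growth of $\psi$ in $y$, at rate $q_k^N/a_N$, must be beaten by the a priori tail $e^{-\b T/N}$, uniformly over an unbounded range of heights. This works only because the growth rate is $O(1/a_N)$ per layer while the a priori decay is $O(1)$ per layer. Some care is needed to make the uniformity in $\b$ legitimate, which is exactly what Remark~\ref{rk:apriori-Ndep} provides (apply it with $\b_N$ chosen per shell, or note that the bracket bound there is uniform over all $\b\ge\b_0$).
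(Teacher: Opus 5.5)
Your argument is correct, but as written it proves the lemma only for $\nu>1$, and it takes a different route from the paper.

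\textbf{The paper's route.} The paper never bounds $\psi$ pointwise. On $\{A(T)\subseteq R_h\}$ it uses $|\psi|^2\ge 0$ to dominate $Q_N$ by the deterministic full-rectangle sum $\frac{1}{Na_N}\sum_{(x,y)\in R_h\setminus R_0}|\psi(x,y)|^2=\frac{1}{a_N}\sum_{y=1}^{h}\sum_{k=2}^K\alpha_k^2(\tfrac{T}{Na_N})\,e^{2q_k^N(y-T/N)/a_N}$. By orthonormality of the $f_k^N$, this carries no factor $N$.

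It then climbs a ladder $h_2=\lceil a_N\sqrt{\log N}\rceil$, $h_j=h_{j-1}\lceil a_N/\sqrt{\log N}\rceil$, for $J=O(\log N)$ steps until $h_J\ge T$. The last event is sure because the height grows by at most one per particle. On $E_2\setminus E_1$ the weight $e^{C\sqrt{\log N}}=N^{o(1)}$ is beaten by $\bbP(E^c)\le N^{-\nu}$. On $E_j\setminus E_{j-1}$ the weight $e^{Ch_{j-1}/\sqrt{\log N}}$ is beaten by the a priori tail $e^{-h_{j-1}}$.

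\textbf{Your route.} You cut at the constant multiple $\beta_0T/N$ and sum over infinitely many linear shells $\beta_j=\beta_0+j$. You rightly note that the bracket in Remark~\ref{rk:apriori-Ndep} is uniform over $\beta\ge\beta_0$ once $T/N\gg\log N$, so $\bbP(H>\beta_jT/N)\le e^{-\beta_jT/N}$ holds for all $j$ at once. This removes the ladder, the choice of $J$, and the deterministic termination. On the moderate part your exponential weight is $O(1)$ rather than $N^{o(1)}$.

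\textbf{The cost of Step 1.} The bound $\|f_k^N\|_\infty\le\sqrt N$ loses a full factor $N$, which is why your moderate part only closes for $\nu>1$. Choosing $\nu=2$ is harmless for Theorem~\ref{th:GFF}, where $\nu$ is a free parameter. However, the lemma is stated for the arbitrary $\nu>0$ fixed at the start of the section. The factor $N$ is an artifact and is easily removed: replace Step 1 by the rectangle bound above. This gives $Q_N\le C\beta y_0e^{2\bar q\beta y_0}$ on $\{H\le\beta T/N\}$, and with your decomposition it yields $\bbE(Q_N\mathbf{1}_{E^c})=O(N^{-\nu})$ for every $\nu>0$.
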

\begin{proof}
Choose 
\[ J =  3 + \bigg\lceil \frac{\log N}{\log ( a_N / \sqrt{\log N } ) } \bigg\rceil , \]
so that $( \frac{a_N}{\sqrt{\log N} } )^{J-3} \geq N$.
Let $h_2 = \lceil a_N \sqrt{\log N } \rceil$ and inductively define 
$$ h_j = h_{j-1} \cdot \left \lceil \frac{ a_N}{ \sqrt{ \log N } } \right \rceil \qquad 
\mbox{ for } 3 \leq j \leq J . $$
Note that, since we assumed that $a_N \gg \log N$, the sequence $(h_j)_{j=2}^J  $ is increasing,
and $h_j > h_2 \geq e^3 T / N$ for all $3 \leq j \leq J$.

For $2 \leq j \leq J$ define $E_j = \{ A(T) \subseteq R_{h_j} \}$ and write $E_1 = E$ for consistency. Note that $E_2 \subseteq E_3 \subseteq \cdots \subseteq E_J$ by the monotonicity $h_j \leq h_{j+1}$, and $E_1 \subseteq E_2$ for $N$ large because $h_2 \geq e^3 T/N \geq T/N + \Delta_N$ for $N$ large. So the sequence $(E_j)_{j=1}^J$ is nested.
By Proposition~\ref{pr:apriori}-(b), applied with the $N$-dependent exponent $\b_j \coloneqq h_j N/T \geq e^3$ (which is $\geq e^3 > e^2$, so Remark~\ref{rk:apriori-Ndep} applies), we have $\bbP(E_j) \geq 1 - e^{-\b_j T/N} = 1 - e^{-h_j}$ for all $2 \leq j \leq J$.
Combining this with Theorem~\ref{th:max_fluct} we get
	\[ \begin{split} 
	& \bbP(E_2 \setminus E_1 )  \leq \bbP (E_1^c ) \leq N^{-\nu} 
	\\  
	& \bbP(E_j \setminus E_{j-1}  )  \leq \bbP (E_{j-1}^c ) \leq e^{-h_{j-1} } , \qquad 
	3\leq j \leq J. 
	\end{split} \] 
	
Now, using the fact that $(f^N_k)_k$ form an orthonormal basis, we see that for any $y \in \bbZ$

	\begin{align} \label{eqn:psi squared}
		\sum_{x \in V_N} |\psi(x,y)|^2 
		& = \sum_{j,k=2}^K \alpha_j \Big( \frac{T}{Na_N} \Big) \alpha_k \Big( \frac{T}{Na_N} \Big) 
		e^{(q_j^N + q_k^N) \big( \frac{y}{a_N} - \frac{T}{N a_N} \big)} 
		\sum_{x \in V_N} f_j^N(x) f_k^N(x) \nonumber \\
		& = N \sum_{k=2}^K \alpha_k^2\!\Big( \frac{T}{Na_N} \Big) 
		\cdot e^{2 q_k^N \big( \frac{y}{a_N} - \frac{T}{N a_N} \big) } .
	\end{align}

Now fix an integer $h \geq 1$. On the event $\{A(T) \subseteq R_h\}$, every $(x, y) \in A_+(T)$ satisfies $y \leq h$, so
\[
  Q_N \;=\; \frac{1}{N a_N} \sum_{(x,y) \in A_+(T)} |\psi(x, y)|^2
  \;\leq\; \frac{1}{N a_N} \sum_{(x,y) \in R_h \setminus R_0} |\psi(x, y)|^2\,.
\]
The right-hand side is a deterministic quantity, which we estimate via~\eqref{eqn:psi squared}. For each $k$, using $e^{-2 q_k^N T/(N a_N)} \leq 1$ to drop the constant factor, the geometric sum is bounded by
	\[ \sum_{y=1}^h e^{2 q_k^N ( y/a_N - T/(N a_N) )}
	\;=\; e^{-2 q_k^N T/(N a_N)} \sum_{y=1}^h e^{2 q_k^N y/a_N}
	\;\leq\; \frac{e^{2 q_k^N (h+1)/a_N}}{e^{2 q_k^N/a_N} - 1}\,, \]
and therefore
\begin{align}
\frac{1}{N a_N} \sum_{(x,y) \in R_h \setminus R_0} | \psi(x,y)|^2
& = \frac{1}{a_N} \sum_{k=2}^K \alpha_k^2\!\Big( \frac{T}{N a_N} \Big)
\sum_{y=1}^h e^{2 q_k^N ( \frac{y}{a_N} - \frac{T}{N a_N} ) } \nonumber \\
& \leq \sum_{k=2}^K \alpha_k^2\!\Big( \frac{T}{N a_N} \Big)
\cdot \frac{1}{ a_N (e^{2 q_k^N / a_N} - 1) } \cdot e^{2 q_k^N (h+1) / a_N } .
\label{eqn:Eh}
\end{align}

Under our standing assumptions, $T/(N a_N) \to y_0$, so $\alpha_k(T/(Na_N)) = O(1)$; and $q_k^N \to \g_k \in (0, \infty)$, so $a_N\,(e^{2 q_k^N / a_N} - 1) \to 2\g_k$ is bounded away from $0$ and $\infty$.

Take $j = 2$. Since $h_2/a_N = O(\sqrt{\log N})$ and $q_k^N$ is bounded, the exponential in~\eqref{eqn:Eh} is at most $e^{C\sqrt{\log N}} = N^{o(1)}$. Combined with $\bbP(E_2 \setminus E_1) \leq N^{-\n}$,
\begin{align}
\bbE ( Q_N \mathbf{1}_{E_2 \setminus E_1} )
& \leq  \frac{1}{N a_N} \sum_{ (x,y) \in R_{h_2} \setminus R_0 } | \psi(x,y)|^2 \cdot \bbP ( E_2 \setminus E_1 )  \nonumber
\\
& \leq C \sum_{k=2}^K \alpha_k^2\!\Big( \frac{T}{N a_N} \Big) \cdot e^{2 q_k^N (h_2+1)/ a_N }
\cdot N^{-\nu }   \leq C' N^{-\nu/2}
\label{eqn:j=2}
\end{align}
for $N$ large enough (absorbing the subpolynomial factor $e^{C \sqrt{\log N}}$ into the $N^{-\n/2}$ slack).

For $j \geq 3$, the recurrence $h_j = h_{j-1}\lceil a_N/\sqrt{\log N}\rceil$ gives $h_j/a_N \leq 2\,h_{j-1}/\sqrt{\log N}$ for $N$ large, so the exponential in~\eqref{eqn:Eh} with $h = h_j$ is at most $\exp(C'\,h_{j-1}/\sqrt{\log N})$. Since $\bbP(E_j \setminus E_{j-1}) \leq e^{-h_{j-1}}$,
\begin{align}
\bbE ( Q_N \mathbf{1}_{E_j \setminus E_{j-1}} )
& \leq  \frac{1}{N a_N} \sum_{ (x,y) \in R_{h_j} \setminus R_0 } | \psi(x,y)|^2 \cdot \bbP ( E_j \setminus E_{j-1} ) \nonumber \\
& \leq C' \exp \Big( C' \frac{ h_{j-1} }{ \sqrt{ \log N} } -  h_{j-1} \Big) \leq C e^{- h_{j-1} / 2 } ,
\label{eqn:j>2}
\end{align}
using $C'/\sqrt{\log N} \leq 1/2$ for $N$ large.

By the recurrence $h_j = h_{j-1}\lceil a_N/\sqrt{\log N}\rceil$ iterated $J-2$ times,
$$ h_J \;\geq\; h_2 \cdot \Big(\frac{a_N}{\sqrt{\log N}}\Big)^{J-2} \;\geq\; h_2 \cdot N\,, $$
by our choice $J = 3 + \lceil \log N / \log(a_N/\sqrt{\log N})\rceil$. Since $h_2 \geq e^3 T/N \geq T/N$, this gives $h_J \geq N \cdot T/N = T$.
By the ancestor-chain argument used in the proof of Proposition~\ref{pr:apriori} (which uses nearest-neighbour vertical steps of the cylinder kernel~\eqref{def:SRW}), the maximal height of $A(T)$ grows by at most one per added particle, hence $A(T) \subseteq R_T \subseteq R_{h_J}$ deterministically, so $\bbP(E_J) = 1$.
Combining \eqref{eqn:j=2} and \eqref{eqn:j>2},
$$ \bbE ( Q_N \mathbf{1}_{E_1^c} ) = \sum_{j=2}^J \bbE ( Q_N \mathbf{1}_{E_j \setminus E_{j-1} }  )
\leq C N^{-\nu/2} + C J e^{- h_2 /2 }\,. $$
Since $h_2 \geq a_N \sqrt{\log N}$ and $a_N \gg \log N$, we have $h_2 / \log N \to \infty$. On the other hand, $J = 3 + \lceil \log N / \log(a_N/\sqrt{\log N}) \rceil = O(\log N)$. Hence $J\, e^{-h_2/2} \to 0$ (super-polynomially in $N$), and combined with $N^{-\nu/2} \to 0$ this gives $\bbE(Q_N \mathbf{1}_{E_1^c}) \to 0$ as $N \to \infty$.
\end{proof}

We next show that $Q_N$ and $W_N$ converge to the same limit.
\begin{Lemma}
\label{clm:Q-W}
There exists a constant $C>0$ (which may depend on the functions $(\alpha_k)_k$
and $y_0, \nu$, but is independent of $N$), such that
$|Q_N - W_N|\, \mathbf{1}_{E} \leq C \frac{\Delta_N}{a_N} \to 0$
as $N \to \infty$.
\end{Lemma}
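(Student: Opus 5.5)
On $E$ we use the same layer-cancellation idea as in the proof of Lemma~\ref{le:replace}, now applied to $|\psi|^2$. The key difference is that $|\psi|^2$ does not have zero horizontal average, so full layers do not cancel. Instead, both $Q_N$ and $W_N$ sum $|\psi|^2$ over sets that agree outside the band of layers $F_N = R_{h+\Delta_N}\setminus R_{\lfloor h-\Delta_N\rfloor}$, with $h = T/N$. Indeed, on $E$ both $A_+(T)$ and $R_{h}\setminus R_0$ contain every full layer at height $1,\dots,\lfloor h-\Delta_N\rfloor$, and neither contains anything above height $h+\Delta_N$. Therefore
\[ |Q_N - W_N|\,\mathbf 1_E \;\le\; \frac{1}{Na_N}\sum_{(x,y)\in (A_+(T)\triangle (R_h\setminus R_0))} |\psi(x,y)|^2 \;\le\; \frac{1}{Na_N}\sum_{y:\,|y-h|\le \Delta_N+1}\ \sum_{x\in V_N}|\psi(x,y)|^2 . \]

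Next we bound each full layer sum with the orthonormality identity \eqref{eqn:psi squared}:
\[ \sum_{x\in V_N}|\psi(x,y)|^2 = N\sum_{k=2}^K \a_k^2\big(\tfrac{T}{Na_N}\big)\, e^{2q_k^N (y-h)/a_N}. \]
For $|y-h|\le \Delta_N+1$ we have $|y-h|/a_N = O(\Delta_N/a_N)=o(1)$. Since $q_k^N\to\g_k$ is bounded, each exponential is at most $e^{o(1)}\le 2$ for $N$ large. Because $T/(Na_N)\to y_0$ and $\a_k$ is continuous near $y_0$, the coefficients $\a_k(T/(Na_N))$ are $O(1)$. Hence each layer contributes at most $CN$.

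There are at most $2\Delta_N+3$ such layers. Summing therefore gives
\[ |Q_N-W_N|\,\mathbf 1_E \le \frac{1}{Na_N}(2\Delta_N+3)\,CN \le C\frac{\Delta_N}{a_N}, \]
using $\Delta_N\ge C\log N\ge 1$. Finally, $\Delta_N = O(\sqrt{a_N(\log N+\log_+ a_N)}) = o(a_N)$ by the standing assumption $a_N\gg \log N+\log_+a_N$, so the bound tends to $0$.

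The only point needing care is the first containment step. The floor in $\lfloor h-\Delta_N\rfloor$ and the possibly non-integer $h$ mean the band must be taken with an extra layer or so on each side, which the $+3$ accounts for. The constant depends on the $\a_k$ near $y_0$, on the $\g_k$, and on $\nu$ through $\Delta_N$ (via $C(\nu,m)$ in Theorem~\ref{th:max_fluct}), as the statement allows.
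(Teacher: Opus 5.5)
Your proposal is correct and matches the paper's proof essentially step for step. You bound $|Q_N-W_N|$ by the sum of $|\psi|^2$ over $A(T)\triangle R_{T/N}$, confine that set on $E$ to the $O(\Delta_N)$ full layers around $T/N$, and evaluate each layer with the orthonormality identity for $\sum_x|\psi(x,y)|^2$, bounding the exponentials uniformly because $\Delta_N/a_N\to 0$. The only cosmetic difference is your extra layer of margin for a possibly non-integer $h$; it is unnecessary since $h=T/N=\lfloor y_0a_N\rfloor$ is an integer, but harmless.
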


\begin{proof}
For any non-negative function $g$ on $V_N \times \bbZ$ and any finite sets $A, B$, the deterministic bound
	\[ \Big| \sum_{(x,y) \in A} g(x,y) - \sum_{(x,y) \in B} g(x,y) \Big|
	\;\leq\; \sum_{(x,y) \in A \triangle B} g(x,y) \]
holds. Applied to $g = |\psi|^2$, $A = A_+(T)$, and $B = R_{T/N}\setminus R_0$, and using that $A(T) \supseteq R_0$ so that $A_+(T) \triangle (R_{T/N}\setminus R_0) = A(T) \triangle R_{T/N}$, this yields
	\[ |Q_N - W_N| \;\leq\; \frac{1}{N a_N}\sum_{(x,y) \in A(T)\triangle R_{T/N}} |\psi(x,y)|^2\,. \]
On the good event $E$, the integer-valued vertical coordinate satisfies $R_{\lfloor T/N - \Delta_N\rfloor} \subseteq A(T) \subseteq R_{T/N + \Delta_N}$, so
	\[ A(T)\triangle R_{T/N} \;\subseteq\; R_{T/N + \Delta_N} \setminus R_{\lfloor T/N - \Delta_N\rfloor}\,. \]
Hence, by~\eqref{eqn:psi squared},
\begin{align}
|Q_N- W_N|\,  \mathbf{1}_{E}
& \leq \frac{1}{N a_N} \sum_{x \in V_N} \sum_{y=\lceil T/N-\Delta_N\rceil}^{\lfloor T/N + \Delta_N\rfloor} | \psi(x,y)|^2 \nonumber \\
& = \frac{1}{a_N} \sum_{y=\lceil T/N-\Delta_N\rceil}^{\lfloor T/N + \Delta_N\rfloor} \sum_{k=2}^K \alpha_k \!\big( \tfrac{T}{Na_N} \big)^2
\cdot e^{2 q_k^N ( \frac{y}{a_N} - \frac{T}{N a_N} ) } \nonumber \\
& \leq \frac{2 \Delta_N+1}{a_N} \sum_{k=2}^K \alpha_k \!\big( \tfrac{T}{Na_N} \big)^2
\cdot e^{2 q_k^N  \Delta_N / a_N } . \nonumber
\end{align}
The conclusion now follows because of the follow observations:
\begin{itemize}
\item The functions $\a_k$ are continuous so $\a_k( \tfrac{T}{Na_N} ) \to \a_k(y_0)$. This in  particular implies $\a_k( \tfrac{T}{Na_N} ) = O(1)$. 

\item $q_k^N \to \g_k$ is bounded and $\tfrac{\Delta_N}{a_N} \to 0$ so $e^{2 q_k^N \Delta_N/a_N} \to 1$. 

\item Also, the prefactor $\tfrac{2\Delta_N + 1}{a_N} \to 0$.
\end{itemize}
\end{proof}

Finally, we compute the limit of $W_N$.
\begin{Lemma}
\label{clm:W to sigma}
We have $W_N \to \sigma^2(\phi)$ as $N \to \infty$.
\end{Lemma}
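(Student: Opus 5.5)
The plan is a direct computation: $W_N$ is a deterministic quantity, so no probability is involved. Using the layerwise identity \eqref{eqn:psi squared} and the fact that $R_{T/N}\setminus R_0$ consists of the full layers $y = 1,\dots,h$ with $h \coloneqq T/N = \lfloor y_0 a_N\rfloor$ an integer, I would write
\[
W_N = \sum_{k=2}^K \alpha_k^2\!\Big(\frac{h}{a_N}\Big)\,\frac{1}{a_N}\sum_{y=1}^{h} e^{2q_k^N (y-h)/a_N}.
\]
Since $K$ is fixed, it suffices to handle each mode $k$ separately and then sum.

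\textbf{Step 1: the coefficients.} Because $h/a_N \to y_0$ and $\alpha_k$ is differentiable, hence continuous, near $y_0$, we get $\alpha_k^2(h/a_N) \to \alpha_k(y_0)^2$.

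\textbf{Step 2: the geometric sum.} Substitute $j = h - y$, so $j$ runs over $0,\dots,h-1$, and set $r_N \coloneqq 2q_k^N/a_N$. Then
\[
\frac{1}{a_N}\sum_{j=0}^{h-1} e^{-r_N j} = \frac{1}{a_N}\,\frac{1-e^{-r_N h}}{1-e^{-r_N}}.
\]
Next I would take the limit of each factor separately:
\begin{itemize}
\item Since $q_k^N \to \gamma_k$ and $a_N \to \infty$, we have $a_N(1-e^{-r_N}) \to 2\gamma_k$.
\item Since $r_N h = 2q_k^N \lfloor y_0 a_N\rfloor/a_N \to 2\gamma_k y_0$, the numerator tends to $1 - e^{-2\gamma_k y_0}$.
\end{itemize}
The $k$-th term therefore converges to $\alpha_k(y_0)^2(1-e^{-2\gamma_k y_0})/(2\gamma_k)$, and summing over $2\le k\le K$ gives $\sigma^2(\phi)$.

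\textbf{Main obstacle: $q_k^N \to \gamma_k$.} This is the only point needing care. The section asserts it, but I would verify it from \eqref{def:qkN}. We have $\cosh(q_k^N/a_N) - 1 = 1-\lambda_k(N) \to 0$, so $q_k^N/a_N \to 0$. Using $\cosh u - 1 = \tfrac{u^2}{2}(1+O(u^2))$, this gives
\[
(q_k^N)^2 = 2a_N^2\bigl(1-\lambda_k(N)\bigr)\bigl(1+o(1)\bigr) \to \gamma_k^2
\]
by Assumption~\ref{a:spectral}. Since $\gamma_k > 0$, the denominators $2\gamma_k$ are nonzero, and the limits in Step 2 are legitimate.
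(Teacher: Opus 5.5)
Your proposal is correct and follows the paper's proof essentially step for step: the layerwise identity \eqref{eqn:psi squared}, the substitution $j = h-y$, the closed-form geometric sum, and the three separate limits for the coefficients, the numerator and the denominator $a_N(1-e^{-2q_k^N/a_N}) \to 2\gamma_k$. Your extra check that $q_k^N \to \gamma_k$, obtained from $\cosh(q_k^N/a_N) = 2-\lambda_k$ together with $\cosh u - 1 \sim u^2/2$, is also correct; the paper only asserts this limit in Section~\ref{sec:preliminaries}, so your argument supplies that step rather than changing the route.
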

\begin{proof}
Since $T = N \lfloor y_0 a_N\rfloor$, we have $h \coloneqq T/N = \lfloor y_0 a_N\rfloor \in \bbZ$. Using~\eqref{eqn:psi squared} and the change of variable $r \coloneqq h - y$,
\begin{align}
W_N & = \frac{1}{a_N} \sum_{y=1}^{h} \sum_{k=2}^K \alpha_k \!\big( \tfrac{T}{N a_N} \big)^2 e^{2 q_k^N ( y - h) /a_N  }
= \frac{1}{a_N} \sum_{k=2}^K \alpha_k \!\big( \tfrac{T}{N a_N} \big)^2   \sum_{r=0}^{h - 1} e^{-2 q_k^N r / a_N }
\nonumber \\
& = \sum_{k=2}^K \alpha_k\!\big( \tfrac{T}{N a_N} \big)^2 \cdot \frac{ 1 - e^{-2 q_k^N T/(N a_N) } }{ a_N( 1 - e^{-2 q_k^N / a_N} ) }\,.
\label{eqn:W to sigma}
\end{align}
To pass to the limit we use three ingredients: 
\begin{itemize}
\item $\tfrac{T}{Na_N} \to y_0$, so $\alpha_k(\tfrac{T}{Na_N})^2 \to \alpha_k(y_0)^2$ by continuity. 
\item $q_k^N \to \gamma_k$, so $e^{-2 q_k^N T/(Na_N)} \to e^{-2 \gamma_k y_0}$. 
\item $\tfrac{q_k^N}{a_N} \to 0$ together with $\tfrac{1 - e^{-u}}{u} \to 1$ as $u \to 0$ give 
$$ a_N(1 - e^{-2 q_k^N/a_N}) = 2 q_k^N \cdot \tfrac{1 - e^{-2 q_k^N/a_N}}{2 \tfrac{q_k^N}{a_N} } \to 2 \gamma_k . $$ 
\end{itemize}
Summing the finitely many $k$-terms,
\[
  W_N \;\to\; \sum_{k=2}^K \alpha_k(y_0)^2\,\frac{1 - e^{-2 \gamma_k y_0}}{2 \gamma_k} \;=\; \sigma^2(\phi)\,.
\]
\end{proof}

We now deduce Theorem \ref{th:GFF} from the above lemmas.

\begin{proof}[Proof of Theorem \ref{th:GFF}]
Recall the martingale differences $\cX_{k,N} \coloneqq (Na_N)^{-1/2}\, \psi(X_k,Y_k)$ for $1 \leq k \leq T$, the quadratic variation $Q_N$, and its deterministic proxy $W_N$. In the proof below we establish the $L^1$ convergence $Q_N \to \sigma^2(\phi)$ by combining Lemmas~\ref{clm:Q on Ec},~\ref{clm:Q-W} and~\ref{clm:W to sigma}; the argument does not require $\sigma^2(\phi) > 0$.

If $\sigma^2(\phi) = 0$, the $L^1$ convergence $Q_N \to 0$ gives $\bbE[M_N(T)^2] = \bbE[Q_N] \to 0$, so $M_N(T) \to 0$ in $L^2$ and hence in probability. By Slutsky's theorem applied to the preamble above, $(Na_N)^{-1/2}\laa D_T, \varphi\raa \to 0$ in probability, which is the claimed conclusion $\Rightarrow \cN(0, 0) = \d_0$.

In the remainder of the proof we assume $\sigma^2(\phi) > 0$ and verify conditions~(a),~(b),~(c) of Theorem~\ref{thm:martingale-CLT}.

Since $Q_N, W_N \geq 0$ and $W_N$ is deterministic, combining Lemmas~\ref{clm:Q on Ec}, \ref{clm:W to sigma}, and~\ref{clm:Q-W} with $\bbP(E^c) \leq N^{-\nu}$ gives
\begin{align*}
\bbE\big( |Q_N - \sigma^2(\phi) | \big)
&\leq \bbE \big( |Q_N - W_N| \mathbf{1}_{E} \big) + \bbE\big( Q_N \mathbf{1}_{E^c} \big) \\
&\qquad + W_N \bbP(E^c) + |W_N - \sigma^2(\phi) |
\;\longrightarrow\; 0
\end{align*}
as $N \to \infty$.
Hence $Q_N \to \sigma^2(\phi)$ in $L^1$, and consequently in probability, thus verifying (c).

To see that (a) holds, note that $\max_{k \leq T} \cX_{k, N}^2 \leq Q_N$ pathwise, so
$$ \bbE \Big( \max_{k \leq T} \cX_{k,N}^2 \Big) \leq \bbE(Q_N)\,. $$
Since we have just proved $Q_N \to \sigma^2(\phi)$ in $L^1$ (above the verification of~(c)), we have $\bbE(Q_N) \to \sigma^2(\phi)$. Hence there exists $N_0$ such that $\bbE(Q_N) \leq \sigma^2(\phi) + 1$ for every $N \geq N_0$. The finitely many remaining $N < N_0$ contribute a finite bound, so $\sup_N \bbE(\max_{k \leq T} \cX_{k, N}^2) < \infty$, i.e.\ condition~(a) of Theorem~\ref{thm:martingale-CLT} is satisfied.

Finally, to prove~(b), recall from~\eqref{eq:truepsi} that
$$ \psi(x, y) = \sum_{j=2}^K \a_j\!\big(\tfrac{T}{Na_N}\big)\, e^{q_j^N(y - \tfrac{T}{N} ) / a_N }  \, f_j^N(x)\,. $$
Cauchy--Schwarz mode-by-mode gives
$$ |\psi(x, y)|^2 \;\leq\; \Big(\sum_{j=2}^K \a_j\!\big(\tfrac{T}{Na_N}\big)^2 \, e^{2 q_j^N(y-\tfrac{T}{N} ) / a_N }\Big) \Big(\sum_{j=2}^K (f_j^N(x))^2\Big)\,. $$
Since $(f_j^N)_{j=1}^N$ is an orthonormal basis of $\bbR^{V_N}$ in the normalized inner product~\eqref{eq:product}, Parseval's identity gives $\sum_{j=1}^N (f_j^N(x))^2 = N$ for every $x \in V_N$, hence $\sum_{j=2}^K (f_j^N(x))^2 \leq N$. Therefore
$$ \cX_{k,N}^2 \;\leq\; \frac{1}{a_N} \sum_{j=2}^K \a_j\!\big(\tfrac{T}{Na_N}\big)^2 \, e^{2 q_j^N (Y_k - \tfrac{T}{N} ) / a_N }  \,. $$
The event $E$ implies that for every $k \leq T$ we have $Y_k \leq T/N + \Delta_N$. So we conclude that
\begin{align*}
\max_{k \leq T} \cX_{k,N}^2 \mathbf{1}_{E} & \leq \sum_{j=2}^K \alpha_j\!\big( \tfrac{T}{N a_N} \big)^2
\cdot \frac{1}{a_N} \exp \Big( \frac{2 q_j^N \Delta_N}{a_N}  \Big) \to 0
\end{align*}
as $N \to \infty$.
Therefore,
\begin{align*}
\bbE \big( \max_{k \leq T} \cX_{k,N}^2 \big) & \leq \bbE \big( \max_{k \leq T} \cX_{k,N}^2 \mathbf{1}_{E} \big) 
+ \bbE ( Q_N \mathbf{1}_{E^c} ) \to 0 ,
\end{align*}
by Lemma \ref{clm:Q on Ec}.  That is, $\max_{k \leq T} |\cX_{k,N}|$ converges to $0$ in $L^2$, and thus in probability as well, verifying (b).

This shows that $\cX_{k, N}$ satisfy the conditions of Theorem \ref{thm:martingale-CLT}, which completes 
the proof of Theorem \ref{th:GFF}.
\end{proof}

\medskip 

\section{Maximal fluctuations bound}\label{sec:max_fluct}
In this section we prove Theorem~\ref{th:max_fluct}. Fix $\n > 0$ and $m \geq 1$, and let $C = C(\n, m) > 0$ be a sufficiently large constant, to be determined below. Throughout, write
\[
  L_T \coloneqq \log N + \log_+(T/N)\,,\qquad
  T^\sharp \coloneqq N\sqrt{\t_{mix}}\,(\log N)^2\,.
\]
With $\Delta_N \coloneqq C \max\!\big\{ \log N,\; \sqrt{((T \wedge T^\sharp)/N)\, L_T} \big\}$, the goal is to show that
\[
  \bbP\big( R_{\lfloor T/N - \Delta_N\rfloor} \subseteq A(T) \subseteq R_{T/N + \Delta_N}\big) \geq 1 - N^{-\n}
\]
for every integer $T$ with $1 \leq T \leq N^m$ and every $N$ large enough. The argument splits according to the size of $T$ into three regimes.

When $T \leq (C/e^3) N \log N$, we have $\Delta_N \asymp \log N$. The inner inclusion is deterministic for $C$ large enough: $\lfloor T/N - \Delta_N\rfloor \leq 0$ and $R_{\lfloor T/N - \Delta_N\rfloor} \subseteq R_0 \subseteq A(T)$. The outer inclusion follows from Proposition~\ref{pr:apriori}-(a) applied with time-coefficient $C/e^3$ and height parameter $\b = C$: the hypothesis $\b > e^2 (C/e^3) = C/e$ holds trivially, and $A(T) \subseteq R_{C \log N} \subseteq R_{T/N + \Delta_N}$ with probability at least $1 - N^{-(C-1)} \geq 1 - N^{-\n}$ for $C \geq \n + 1$.

When $(C/e^3) N \log N \leq T \leq T^\sharp$, we have $T \wedge T^\sharp = T$ and $\Delta_N \asymp \sqrt{(T/N) L_T}$. Propositions~\ref{pr:inner} and~\ref{pr:outer} (Sections~\ref{sec:inner} and~\ref{sec:outer}) give the inner and outer inclusions respectively, each with probability at least $1 - N^{-(\n+1)}$ for $C$ large enough (so that the threshold $(C/e^3) N\log N$ exceeds the $\nu$-dependent lower bounds required by those propositions); a union bound yields the $1 - N^{-\n}$ bound.

When $T^\sharp \leq T \leq N^m$, Proposition~\ref{pr:reduction} (Section~\ref{sec:reduction}) couples $A(T)$ with an auxiliary IDLA process $A_*$ on $\bbG_N$ starting from $R_0$, at a reduced time $T' \leq T^\sharp$ and an integer shift $k = (T - T')/N$, with $A(T) = A_*(T') + (0, k)$ holding with probability at least $1 - N^{-(\n+1)}$. Depending on whether $T' \leq (C/e^3) N\log N$ or $T' \geq (C/e^3) N\log N$, the previous small- or medium-time regime applied at parameter $\n + 1$ to $A_*$ at time $T'$ gives an analogous inclusion for $A_*(T')$ with width $\Delta_N' \coloneqq C \max\{\log N, \sqrt{(T'/N) L_{T'}}\} \leq \Delta_N$, using $T' \leq T^\sharp$ and $L_{T'} \leq L_T$. Shifting by $(0, k)$ and using $T'/N + k = T/N$ gives the claim on the intersection of the two events.

\subsection{Maximal fluctuations inner bound} \label{sec:inner}
Let $(A(t))_{t\leq T}$ be an IDLA process on $\bbG_N$ starting from $A(0) = R_0$, and assume $T \geq C N\log N$ with $C = C(\n, m)$ as in the section opening.
In this section we prove that
	\[ A(T) \supseteq R_{\lfloor T/N - \ell_*\rfloor} \]
with high probability for $N$ large enough, where $\ell_*$ is as in Proposition~\ref{pr:inner}.
In fact, as is standard in IDLA, we prove something stronger, namely that the IDLA cluster is, with high probability, completely filled up to height $\lfloor T/N - \ell_*\rfloor$ \emph{even if the particles are stopped when reaching this height}. Let us make this statement precise.

For $h \geq 1$ integer, denote by $(\bar A_h(t))_{t \geq 0}$ the IDLA process, but where the particles are halted when they reach height $h$. That is, $\bar A_h(0) = R_0$, and for $t \geq 0$ we start a cylinder walk (as defined in~\eqref{def:SRW}) uniformly on
$\{ (x,0) : x \in V_N \}$, and we let $(\bar X_{t+1}, \bar Y_{t+1})$ denote the
exit location of the walk
from  $\bar A_h(t) \cap R_{h-1}$.  Define $\bar A_h(t+1) = \bar A_h(t) \cup \{ (\bar X_{t+1}, \bar Y_{t+1}) \}$. Note that necessarily $\bar Y_{t+1} \leq h$. 
It is immediate to see that, as long as $\bar A_h(t) \subseteq R_{h-1}$, the distribution of $(\bar A_h(s))_{s \leq t}$
is exactly that of the original IDLA process $(A(s))_{s \leq t}$. Moreover, it is ensured by the definition
that $\bar A_h(t) \subseteq R_h$ for all $t\geq 0$.
Thus it follows from a standard application of the abelian property that one may couple $(A(t))_{t\geq 0}$ and $(\bar A_h(t))_{t \geq 0}$ in such a way that $\bar A_h(t) \subseteq A(t)$ for all $t \geq 0$. More generally, for all positive integers $h_1 \leq h_2$ there exists a coupling of $(\bar A_{h_1}(t))_{t\geq 0}$ and $(\bar A_{h_2}(t))_{t\geq 0}$ such that
\begin{equation}\label{eq:monotone_stop}
	\bar A_{h_1}(t) \subseteq \bar A_{h_2}(t) \qquad \text{for every } t \geq 0\,.
\end{equation}

\begin{Proposition}\label{pr:inner}
For every $\n > 0$ there exist constants $C_1 = C_1(\n) < \infty$ and $K_0 = K_0(\n) < \infty$ such that, for every integer $T \geq K_0 N\log N$ and with
	\[ \ell_* \coloneqq C_1 \sqrt{\frac{T}{N}\Big( \log N + \log_+\!\big(T/N\big)\Big)}\,, \qquad h \coloneqq \Big\lfloor \frac{T}{N} - \ell_* \Big\rfloor\,, \]
the IDLA process $(A(t))_{t \leq T}$ on $V_N \times \bbZ$ starting from $A(0) = R_0$ satisfies $h \geq 1$ and
	\[ \bbP \big( A(T) \not\supseteq R_h \big) \leq \bbP\!\big( \bar A_h(T) \not\supseteq R_h \big) \leq N^{-(\n+1)} \]
for $N$ large enough.
\end{Proposition}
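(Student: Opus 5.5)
The plan is to prove the second inequality $\bbP(\bar A_h(T)\not\supseteq R_h)\le N^{-(\n+1)}$; the first follows from the coupling $\bar A_h(t)\subseteq A(t)$ noted before the statement. That $h\ge 1$ is immediate, since $T/N\ge K_0\log N$ while $\ell_*\le C_1\sqrt{(T/N)L_T}=o(T/N)$ once $K_0$ is large compared with $C_1^2$.

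\emph{Union bound and target.} By a union bound over the at most $Nh\le N\cdot T/N\le N^{m}$ sites $z=(x_0,y_0)\in R_h\setminus R_0$, it suffices to show $\bbP(z\notin\bar A_h(T))\le N^{-(\n+m+2)}$ for each fixed $z$. The $\log_+(T/N)$ term in $\ell_*$ is there so that this union bound is affordable uniformly in $T$. By vertex-transitivity the estimate is uniform in $x_0$, so only $y_0\in\{1,\dots,h\}$ matters.

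\emph{Test function and martingale.} Following Jerison--Levine--Sheffield, I would use a function that is discrete harmonic except at $z$. Concretely, take the Green function $G_z(w)$ of the cylinder walk killed on reaching level $h$ and on dropping to level $0$. Two facts about the start make this workable:
\begin{itemize}
\item A walker started uniformly on $V_N\times\{0\}$ is still horizontally uniform when it first reaches level $1$, by the uniformity argument in the proof of Proposition~\ref{pr:apriori}.
\item So all first-passage data reduce to the one-dimensional lazy walk on $\{0,\dots,h\}$.
\end{itemize}
Summing $G_z$ over the layer $V_N\times\{y\}$ gives $g(y,y_0)$, the explicit 1D Green function of the strip, which is a tent $\asymp\min(y,y_0)(h-\max(y,y_0))/h$. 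On the event $\{z\notin\bar A_h(T)\}$ no walker ever settles at $z$, so $G_z$ is harmonic along every trajectory up to settlement. Consequently
\[ \mathcal M_t\coloneqq\sum_{w\in\bar A_h(t)\setminus R_0}G_z(w)-t\,\bar g_0 \]
is a martingale up to the stopping time at which $z$ is occupied. Here $\bar g_0$ is the mean of $G_z$ over the entrance distribution, and the dropping-to-level-$0$ excursions are accounted for by restarting, exactly as in the alive/ghost bookkeeping.

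\emph{Deterministic gap and concentration.} On $\{z\notin\bar A_h(T)\}$ we would have
\[ \mathcal M_T\le\sum_{w\in R_h\setminus R_0}G_z(w)-T\bar g_0=\sum_{y=1}^{h}g(y,y_0)-T\,\bar g_0 . \]
Using $T\ge N(h+\ell_*)$ and the explicit tent, the right-hand side should be at most $-c\,\ell_*\,\bar G$, where $\bar G$ is the natural scale of $G_z$. So non-coverage forces a large negative deviation of $\mathcal M$.

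I would control that deviation with Freedman's inequality. The increments are bounded by $\max G_z$, which is $O(1)$ because each layer carries total mass $g\le h$ spread over $N$ sites and transitivity gives a uniform local bound. The predictable quadratic variation is at most $T\cdot\bbE[G_z(\text{settling point})^2]$, which I expect to be $\lesssim (T/N)\bar G^2$. Freedman then gives a tail of order
\[ \exp\!\big(-c\,\ell_*^2/(T/N)\big)=\exp(-cC_1^2L_T), \]
which is at most $N^{-(\n+m+2)}$ once $C_1$ is large.

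\emph{Main obstacle.} The hard part is this second-moment and sup-norm control of $G_z$ near its singularity. On a general transitive base the walk's local time at $z$ is not tied to mixing, and we are deliberately avoiding mixing inputs. I would first try to bound $\sum_w G_z(w)^2$ via $G_z^2\le G_z\cdot\max G_z$ together with the layer identity, so that only the 1D Green function enters.
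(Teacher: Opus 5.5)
Your overall scheme (a function harmonic off $z$, a deterministic gap on $\{z\notin\bar A_h(T)\}$, then Freedman) is the right one. However, the function you chose breaks the gap step.

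If $G_z$ is killed at level $0$ as well as at level $h$, then $\sum_t G_z(\bar X_t,\bar Y_t)-t\bar g_0$ is not a martingale. A walker re-enters the strip from level $0$ a random, cluster-dependent number of times, at non-uniform positions. So the conditional mean of $G_z$ at settlement depends on the current cluster, and no deterministic compensator works. (If the cluster is $R_k$ with $k+1\le y_0$, that mean is $g(k+1,y_0)/N$, which grows with $k$.)

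With the compensator you wrote, the claimed inequality is actually false. The tent $g(y,y_0)=4\min(y,y_0)(h-\max(y,y_0))/h$ gives $\sum_{y=1}^{h-1}g(y,y_0)=2y_0(h-y_0)$. Meanwhile $T\bar g_0=4(T/N)(h-y_0)/h\le 16(h-y_0)$, so your right-hand side is positive as soon as $y_0>8$.

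The repair is to kill only at level $h$. Then:
\begin{itemize}
\item $G_z$ is harmonic on all of $R_{h-1}\setminus\{z\}$, including $R_0$.
\item Optional stopping from $V_N\times\{0\}$ gives exactly $\bar g_0=4(h-y_0)/N$.
\item The layer sums are $4(h-\max(y,y_0))$, flat below $y_0$ rather than a tent.
\item The gap becomes at most $-4(h-y_0)\big(\ell_*+(h-y_0+1)/2\big)$.
\end{itemize}
Your ``main obstacle'' then disappears. By nonnegativity and the layer identity, $G_z\le 4(h-y_0)$ pointwise, so $G_z/(4(h-y_0))\in[0,1]$ is bounded by its own square root. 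The predictable quadratic variation is therefore at most $T/N$ in normalised units, with no local estimate at the pole. Note that the unnormalised claim $\max G_z=O(1)$ is false: by transitivity, $G_z(z)$ is at least the layer average $4(h-y_0)/N$, which is $\gg 1$ when $h\gg N$.

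Even after this repair, the Green-function route cannot reach the top layer $y_0=h$, which the statement requires since it asserts $\bar A_h(T)\supseteq R_h$. A Green function killed at level $h$ with pole on level $h$ is identically zero. Killing at $h+1$ instead leaves the frozen particles contributing an uncontrolled amount $\sum_x N_x G_z(x,h)$, because many particles can freeze at the same level-$h$ site.

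The missing idea is the paper's. Take $H_\zeta(x,y)$, the probability that the walk first hits the layer $\{y=\zeta_2\}$ at $\zeta$. Use the monotone coupling $\bar A_{\zeta_2}(T)\subseteq\bar A_h(T)$ to place every $\zeta$ on the stopping layer of its own halted process. Then $0\le H_\zeta\le 1$, its layer sums below $\zeta_2$ equal $1$, and frozen particles contribute exactly the number $Z$ of freezes at $\zeta$. On $\{\zeta\notin\bar A_{\zeta_2}(T)\}$ one gets $M_\zeta(T)\le -T/N+(\zeta_2-1)<-\ell_*$ deterministically, and Freedman with quadratic variation at most $T/N$ finishes. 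This single function handles every layer at once.

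Finally, $T\le N^m$ is not a hypothesis of the proposition, and $C_1$ may depend on $\nu$ only. Bound the number of sites by $Nh\le T\le e^{L_T}$ and aim for $e^{-(\nu+4)L_T}$ per site; this is exactly what the $\log_+(T/N)$ term in $\ell_*$ buys.
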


\begin{Remark}[Inner-tail family]\label{rk:inner-tail}
The proof below establishes the stronger tail 
	\[ \bbP\!\big( A(T) \not\supseteq R_{\lfloor T/N - \ell \rfloor} \big) \;\leq\; T\, \exp\!\Big(\! -\frac{\ell^2 N}{4T}\Big)\,, \]
valid for every $0 \leq \ell \leq T/(2N)$ with $\lfloor T/N - \ell\rfloor \geq 1$. The proposition is the specialization $\ell = \ell_*$, which makes the right-hand side polynomially small in $N$.
\end{Remark}

The proof adapts the one-step iterative argument of Jerison, Levine and Sheffield~\cite{jerison2012logarithmic} to the general kernel setting, using Freedman's martingale inequality~\cite{freedman1975tail}.

We now return to our argument. With $h = \lfloor T/N - \ell_*\rfloor$, recall that $R_0 \subseteq \bar A_h(T)$ deterministically (the bottom half-cylinder is the initial configuration). The event $\{\bar A_h(T) \not\supseteq R_h\}$ therefore depends only on the finite set $R_h \setminus R_0 = V_N \times \{1, \ldots, h\}$, which has $Nh$ vertices. By the union bound,
	\begin{equation}\label{UB}
	 \bbP \big( \bar A_h(T) \not\supseteq R_h \big) \leq \sum_{\zeta \in R_h \setminus R_0} \bbP\!\big( \zeta \notin \bar A_h(T) \big)\,.
	\end{equation}
We therefore wish to bound $\bbP(\zeta \notin \bar A_h(T))$ for an arbitrary $\zeta \in R_h \setminus R_0$.

Fix such a $\zeta$ and write $\zeta = (\zeta_1, \zeta_2)$ with $\zeta_1 \in V_N$ and $1 \leq \zeta_2 \leq h$.
By ~\eqref{eq:monotone_stop} applied to the integer stopping heights $\zeta_2 \leq h$, there is a coupling with $\bar A_{\zeta_2}(T) \subseteq \bar A_h(T)$. In particular, $\bbP(\zeta \notin \bar A_h(T)) \leq \bbP(\zeta \notin \bar A_{\zeta_2}(T))$: it suffices to prove the bound for the stopped process $\bar A_{\zeta_2}(T)$ with stopping height equal to $\zeta_2$ itself. Renaming $\zeta_2$ to $h$ in the remainder of this argument (so that $\zeta$ now lies on the stopping layer), we may therefore assume without loss of generality that $\zeta_2 = h$.

For every $(x,y) \in V_N \times \bbZ$ define $H_\zeta(x,y)$ to be the probability that the cylinder walk on $V_N \times \bbZ$ started at $(x,y)$ reaches the layer $\{ (x', \zeta_2) : x' \in V_N \}$ for the first time at the vertex $\zeta$, with the convention that $H_\zeta(x,y) = 0$ for $y > \zeta_2$.

Recall that $(\bar X_t, \bar Y_t)_{t \leq T}$ denote the exit locations added by the stopped process to form $\bar A_h(T)$.
Since $H_\zeta$ is discrete harmonic on the half-cylinder $R_{\zeta_2 - 1}$, we obtain that if 
$$ M_\zeta(t) = \sum_{s=1}^t \Big( H_\zeta(\bar X_s,\bar Y_s) - \frac1N \Big) , $$
then $(M_\zeta(t) )_{t\geq 0} $ defines a martingale.  

\begin{Lemma} \label{lem:M lower tail}
For every $0 \leq \ell \leq T/(2N)$,
$$ \bbP( M_\zeta(T) \leq -\ell ) \leq \exp\!\Big( -\frac{\ell^2 N}{4T} \Big)\,. $$
\end{Lemma}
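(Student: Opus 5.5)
We will apply Freedman's inequality to the martingale $M_\zeta$, whose increments are $\xi_s \coloneqq H_\zeta(\bar X_s,\bar Y_s) - 1/N$. Since $H_\zeta$ is a probability, $\xi_s \geq -1/N$, so the increments are bounded below by $-1/N$. Applying Freedman to $-M_\zeta$ (increments bounded above by $1/N$) gives, for every $\ell \geq 0$ and $W > 0$,
\[ \bbP\big( M_\zeta(T) \leq -\ell,\ \langle M_\zeta\rangle_T \leq W \big) \leq \exp\!\Big( -\frac{\ell^2}{2(W + \ell/N)} \Big)\,, \]
where $\langle M_\zeta\rangle_T = \sum_{s\leq T} \bbE[\xi_s^2 \mid \cF_{s-1}]$ is the predictable quadratic variation. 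The plan is therefore to obtain a deterministic bound $\langle M_\zeta \rangle_T \leq T/N$. Combined with $\ell \leq T/(2N)$, which gives $\ell/N \leq T/N^2 \leq T/N$, this yields a denominator of at most $2(T/N + T/N) = 4T/N$, and hence the claimed bound $\exp(-\ell^2 N/(4T))$.

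The key step is the conditional variance bound. We have $\bbE[\xi_s^2 \mid \cF_{s-1}] \leq \bbE[H_\zeta(\bar X_s,\bar Y_s)^2 \mid \cF_{s-1}] \leq \bbE[H_\zeta(\bar X_s,\bar Y_s)\mid \cF_{s-1}]$, using $0\leq H_\zeta\leq 1$. Next we use the martingale property, equivalently optional stopping for the bounded harmonic function $H_\zeta$ on $R_{h-1}$ along the $s$-th walk stopped at its exit from $\bar A_h(s-1)\cap R_{h-1}$. This gives $\bbE[H_\zeta(\bar X_s,\bar Y_s)\mid\cF_{s-1}] = \frac1N\sum_{x} H_\zeta(x,0)$. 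The right-hand side equals $1/N$ by the uniformity argument from Proposition~\ref{pr:apriori}: the horizontal coordinate at the first hitting time of level $h$, for a walk started uniformly on level $0$, is uniform on $V_N$. Hence each conditional variance is at most $1/N$, and $\langle M_\zeta\rangle_T \leq T/N$ deterministically. With this bound the Freedman event $\{\langle M\rangle_T\le W\}$ with $W=T/N$ is the whole space.

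Before this, we check the optional stopping and martingale claim. The walk is stopped upon leaving $\bar A_h(s-1)\cap R_{h-1}$, and this exit happens almost surely at height at most $h$ (it cannot be below $1$, since $R_0$ is contained in the cluster). $H_\zeta$ is harmonic at every site of $R_{h-1}$ and bounded, so the stopped process is a bounded martingale. We also record that at sites of level $h$ we have $H_\zeta = \mathbf 1_{\zeta}$, which is consistent with the convention in the definition.

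The main obstacle is invoking a version of Freedman's inequality with the correct constants, specifically the form $\exp(-\ell^2/(2(W+b\ell)))$ with one-sided increment bound $b=1/N$. If only the form $\exp(-\ell^2/(2W+2b\ell/3))$ is available, the bound is even better, so either form suffices. Everything else is routine.
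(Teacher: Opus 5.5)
Your proposal is correct and follows the same route as the paper. You apply Freedman's inequality to $-M_\zeta$, bounding the predictable quadratic variation deterministically by $T/N$ via $H_\zeta^2 \leq H_\zeta$, optional stopping, and the uniformity of the first hit of level $h$; the only difference is the form of Freedman's inequality: the paper uses the exponential supermartingale form (1.5) with increment bound $1$ and the explicit choice $\lambda = \ell N/(2T)$ (this is where $\ell \leq T/(2N)$ is needed), while your tail form with the sharper one-sided bound $1/N$ would already work for $\ell \leq T$.
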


\begin{proof}
Write $M$ in place of $M_\zeta$ for brevity, and set $\x_t \coloneqq H_\zeta(\bar X_{t}, \bar Y_{t}) - 1/N$ for $1 \leq t \leq T$. The $t$-th particle starts uniformly on $V_N \times \{0\}$, and $H_\zeta$ is discrete harmonic on $R_{h-1}$, so optional stopping at the walk's settlement gives
	\[ \bbE\!\big[ H_\zeta(\bar X_t, \bar Y_t) \mid \bar A_h(t-1) \big] = \frac{1}{N} \sum_{x \in V_N} H_\zeta(x, 0) = \frac{1}{N}\,, \]
where the last equality uses that a cylinder walk started uniformly on $V_N \times \{0\}$ first hits level $h$ uniformly on $V_N \times \{h\}$ (as in the proof of Proposition~\ref{pr:apriori}).
Hence $\bbE(\x_t \mid \cF_{t-1}) = 0$ and $\x_t \in [-1/N, 1]$ almost surely. Expanding the square,
	\[ \bbE\!\big[ \x_t^2 \mid \bar A_h(t-1) \big] = \bbE\!\big[ H_\zeta^2 \mid \bar A_h(t-1) \big] - \frac{2}{N}\,\bbE\!\big[ H_\zeta \mid \bar A_h(t-1) \big] + \frac{1}{N^2}
	= \bbE\!\big[ H_\zeta^2 \mid \bar A_h(t-1) \big] - \frac{1}{N^2}\,. \]
Since $H_\zeta \in [0, 1]$ pointwise, we have $H_\zeta^2 \leq H_\zeta$, hence $\bbE[H_\zeta^2 \mid \bar A_h(t-1)] \leq 1/N$, and so
	\[ \bbE\!\big[ \x_t^2 \mid \bar A_h(t-1) \big] \leq \frac{1}{N} - \frac{1}{N^2} \leq \frac{1}{N}\,. \]
The predictable quadratic variation of $M(T) = \sum_{t=1}^T \x_t$ therefore satisfies $V(T) \leq T/N$.
Applying Freedman's inequality (see (1.5) in~\cite{freedman1975tail}) to the martingale $-M$, whose increments $-\x_t \in [-1, 1/N]$ are bounded above by $1$: for every $\l > 0$,
	\[ \bbE \Big[ \exp \big( -\l M(T) - (e^\l-1-\l) V(T) \big) \Big] \leq 1\,. \]
Since $V(T) \leq T/N$ pathwise and $e^\l - 1 - \l \geq 0$, replacing $V(T)$ by $T/N$ in the exponent only makes the exponential larger, so $\bbE[\exp(-\l M(T))] \leq \exp\!\big((e^\l - 1 - \l)\,T/N\big)$. By Markov's inequality applied to $\exp(-\l M(T))$, for every $\l > 0$,
	\[ \bbP(M(T) \leq -\ell) \leq \exp\!\big(-\l \ell + (e^\l - 1 - \l)\,T/N\big)\,. \]
Choose $\l = \ell N/(2T)$, so $\l \leq 1/4$ since $\ell \leq T/(2N)$. For $\l \in [0, 1/2]$, the elementary inequality $e^\l - 1 - \l \leq \l^2$ gives
	\[ \bbP(M(T) \leq -\ell) \leq \exp\!\Big(-\frac{\ell^2 N}{2T} + \frac{\ell^2 N}{4T}\Big) = \exp\!\Big(-\frac{\ell^2 N}{4T}\Big)\,. \qedhere \]
\end{proof}

We can now prove the inner bound of Proposition \ref{pr:inner}.

\begin{proof}[Proof of Proposition \ref{pr:inner}]
Fix $\n > 0$. Set $L_T \coloneqq \log N + \log_+(T/N)$ for brevity, and note that $L_T \geq \log N$. We choose
\[
  C_1 \coloneqq 2\sqrt{\n+4}\,,\qquad K_0 \coloneqq 16\, C_1^2 = 64\,(\n+4)\,,
\]
and check the two conditions (i) $C_1^2 \geq 4(\n + 4)$, so that $\ell_*^2 \cdot N/(4T) = (C_1^2/4)\, L_T \geq (\n + 4)\, L_T$, and (ii) $\ell_* \leq T/(2N)$, for $T \geq K_0 N \log N$ and $N$ large enough. Condition~(i) is immediate by the choice of $C_1$. For condition~(ii), squaring shows that $\ell_* \leq T/(2N)$ is equivalent to $T/N \geq 4 C_1^2\, L_T$. When $T/N \leq N$, we have $\log_+(T/N) \leq \log N$, so $L_T \leq 2\log N$, and our hypothesis $T \geq K_0 N \log N = 16 C_1^2 N \log N$ yields $T/N \geq 16 C_1^2 \log N \geq 8 C_1^2 L_T$, stronger than required. When $T/N > N$, set $u \coloneqq T/N > N$, so $\log_+(T/N) = \log u \geq \log N$ and $L_T \leq 2 \log u$; we need $u \geq 8 C_1^2 \log u$, which holds uniformly for $u \geq N$ once $N$ is large enough (depending on $C_1$, hence on $\n$) since $u / \log u$ is increasing on $(e, \infty)$ and $N/\log N \to \infty$. In either range, $\ell_* \leq T/(2N)$ for $N$ large. Hence $h = \lfloor T/N - \ell_*\rfloor \geq T/(2N) - 1 \geq 1$ for $N$ large.

By the union bound~\eqref{UB} and the reduction following it, it suffices to show that for every $\zeta = (z, h) \in V_N \times \{h\}$,
	\begin{equation}\label{goal_inner}
	\bbP( \zeta \notin \bar A_h(T) ) \leq e^{-(\n + 4)\,L_T}\,.
	\end{equation}
Since $|R_h\setminus R_0| = Nh \leq T$ and, using $e^{L_T} = N e^{\log_+(T/N)} = \max\{N, T\} \geq T$, the union bound combined with~\eqref{goal_inner} gives a total at most $e^{L_T - (\n+4) L_T} = e^{-(\n+3) L_T} \leq N^{-(\n+3)} \leq N^{-(\n+1)}$ for $N$ large (using $L_T \geq \log N$).

It remains to prove~\eqref{goal_inner}. By condition~(i) and Lemma~\ref{lem:M lower tail} applied with $\ell = \ell_*$ (the hypothesis $\ell_* \leq T/(2N)$ holds by condition~(ii)),
$$ \bbP( M(T) \leq -\ell_* ) \leq \exp\!\Big( -\frac{\ell_*^2 N}{4T} \Big) \leq e^{-(\n + 4) L_T}\,, $$
where $M = M_\zeta$ as above.
By definition, $H_\zeta(x,y) = 0$ for every $x \in V_N$ and every $y > h$, $H_\zeta(x, h) = 0$ for $x \neq z$, and $H_\zeta(z, h) = 1$.
Set
	\[ Z \coloneqq \sum_{t=1}^T \mathbf{1}( (\bar X_t, \bar Y_t) = \zeta )\,, \]
so that $Z$ counts the number of particles frozen at $\zeta$.
Since $\pi_N$ is uniform and $H_\zeta$ is harmonic on $R_{h-1}$, summation over $V_N$ gives $\frac{1}{N}\sum_{x \in V_N} H_\zeta(x, y) = 1/N$ for every $1 \leq y < h$, hence
	\[ \sum_{(x,y) \in R_{h-1} \setminus R_0} H_\zeta(x, y) = h - 1\,. \]
Since $H_\zeta \geq 0$ and $\bar A_h(T) \cap (R_{h-1} \setminus R_0) \subseteq R_{h-1} \setminus R_0$,
\begin{align*}
M(T)
& = -\frac{T}{N} + \sum_{t=1}^T \mathbf{1}_{\{\bar Y_t < h\}} H_\zeta(\bar X_t, \bar Y_t) + \sum_{t=1}^T \mathbf{1}_{\{\bar X_t = z,\, \bar Y_t = h\}}
\\ & = -\frac{T}{N} + \sum_{(x,y) \in \bar A_h(T) \cap (R_{h-1} \setminus R_0)} H_\zeta(x, y) + Z
\\ & \leq -\frac{T}{N} + (h-1) + Z\,.
\end{align*}
Since $h \leq T/N - \ell_*$, we have $T/N - (h - 1) \geq \ell_* + 1 > \ell_*$, hence $M(T) < -\ell_* + Z$. On the event $\{ \zeta \notin \bar A_h(T) \}$, we have $Z = 0$, so
	\[ \bbP\big( \zeta \notin \bar A_h(T) \big) \leq \bbP\big( M(T) < -\ell_* \big) \leq e^{-(\n + 4) L_T}\,, \]
which is~\eqref{goal_inner}.

\end{proof}

\medskip 

\subsection{Maximal fluctuations outer bound} \label{sec:outer}
In this section we deduce the outer bound from the inner bound of Proposition~\ref{pr:inner}, which together with Proposition~\ref{pr:reduction} (Section~\ref{sec:reduction}) completes the proof of Theorem~\ref{th:max_fluct}. Throughout, $T \geq C N\log N$ with $C = C(\n, m)$ as in the section opening, and $\ell_*$ and $h \coloneqq \lfloor T/N - \ell_*\rfloor$ are as in Proposition~\ref{pr:inner}. For $C$ sufficiently large, Proposition~\ref{pr:inner} applies both at parameter $\n$ and at parameter $\n+1$ (the latter is needed inside the proof of Proposition~\ref{pr:outer}).

\begin{Proposition}\label{pr:outer}
For every $\n > 0$ there exist $C' = C'(\n) < \infty$ and $K' = K'(\n) < \infty$ such that, for every integer $T \geq K' N \log N$ and with $r \coloneqq C' \ell_*$,
	\[ \bbP \big( A(T) \nsubseteq R_{T/N + r} \big) \leq N^{-(\n+1)} \]
for $N$ large enough.
\end{Proposition}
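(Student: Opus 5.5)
Because all the horizontal information is uniform at first hitting times of layers, the outer bound can be derived from the inner bound (Proposition~\ref{pr:inner}) using the alive/ghost argument of Lawler, Bramson and Griffeath, as the section opening says. I would carry it out as follows.

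\emph{Step 1: excess particles.} Apply Proposition~\ref{pr:inner} at parameter $\n+1$ and time $T$. With probability at least $1-N^{-(\n+2)}$ the cluster contains $R_h$, where $h=\lfloor T/N-\ell_*\rfloor$. Call this event $G$. By the abelian property, on $G$ we may run IDLA in two stages: first the stopped process $\bar A_h$, which fills $R_h$ exactly, and then all particles that were halted on layer $h$ but found an occupied site there. These halted particles are then released from their halting positions. A halting position is a site on layer $h$, and the stopped process fills $V_N\times\{1,\dots,h\}$ using exactly $Nh$ particles. Hence the number of released particles is $T-Nh\le N(\ell_*+1)$ deterministically. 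Under $\bar A_h$, the horizontal positions of the walks at their first hitting of level $h$ are i.i.d.\ uniform, by the conditional uniformity argument in the proof of Proposition~\ref{pr:apriori}. However, restricting to the subset of walks that find an occupied site spoils this independence.

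\emph{Step 2: comparison with the flat a priori bound.} To avoid conditioning issues, I would use a ghost/monotonicity coupling. Release $M:=\lceil N(\ell_*+1)\rceil$ particles, which is at least as many as needed. Start each from a uniform point of $V_N\times\{h\}$ and run them as IDLA above the flat configuration $R_h$. The claim is that the extra released walks can only enlarge the cluster, and that an IDLA started from $R_h$ with uniformly distributed sources stochastically dominates the actual continuation. This second point is the delicate one. I would argue it through the ancestor-chain recurrence (eq.~\eqref{eq:mu-recurrence}) rather than through a direct coupling: the recurrence $\mu_m(t+1)-\mu_m(t)\le \mu_{m-1}(t)/N$ only uses the fact that a particle reaching level $m$ passes level $m-1$ at a uniform horizontal position. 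That property still holds for the actual walkers, whose full trajectories from level $0$ are cylinder walks. Re-running the induction of Proposition~\ref{pr:apriori} with levels counted above $h$ and with at most $M$ particles reaching above $h$ gives
\[ \bbP\big(A(T)\not\subseteq R_{h+s},\,G\big)\le \frac{1}{N^{s-1}}\binom{M}{s}\le N\Big(\frac{eM}{Ns}\Big)^s . \]

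\emph{Step 3: choice of $r$.} Take $s=\lceil e^2(\ell_*+1)\rceil+\lceil(\n+3)\log N\rceil$. Since $M/N\le\ell_*+1$, the right-hand side of the display is at most $N e^{-s}\le N^{-(\n+2)}$. Because $T\ge K'N\log N$, we have $\ell_*\ge C_1\sqrt{K'}\log N$, so $s\le C'\ell_*$ for a suitable $C'$. Therefore $h+s\le T/N+r$ with $r=C'\ell_*$. Adding $\bbP(G^c)$ gives a total bound of at most $N^{-(\n+1)}$.

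The main obstacle is the justification in Step 2 that the released particles obey the recurrence. The walkers are not independent of the cluster once we condition on $G$. I would first try to prove the recurrence unconditionally for the full process, bounding $\bbE[Z_{h+m}(T)]$ directly. Via abelian reordering, each particle settling above $h+m-1$ must have crossed level $h+m-1$ at a uniform independent horizontal position, at a time when that site was already occupied. The number of particles that cross level $h$ at all is controlled on $G$ by the inner bound, and I would handle $G^c$ by the trivial probability estimate.
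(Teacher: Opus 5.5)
Your Steps~1 and~3 are fine as far as they go. On $G$ the number of excess walkers is $T-Nh\le N(\ell_*+1)$. \emph{If} their continuation were dominated by a release of $M$ independent uniform walkers from $R_h$, your binomial estimate would indeed give $r=O(\ell_*)$.

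The gap is Step~2, which is where the whole difficulty of the proposition lies. The recurrence $\mu_m(t+1)-\mu_m(t)\le\mu_{m-1}(t)/N$ of Proposition~\ref{pr:apriori} needs the crossing position at level $m-1$ to be uniform \emph{conditionally on the current cluster}, not just marginally. In the original time order this holds, because walker $t+1$ is independent of $A(t)$. But then all $T$ walkers enter the sum. You also cannot insert $\mathbf 1_G$ into the recurrence, since $G$ is determined by the whole history up to time $T$. So the iteration only reproduces the a priori height of order $\beta T/N$ and never uses the inner bound.

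In the stage-2 order only the $T-Nh$ excess walkers enter, but then independence fails. These walkers are selected by where they first hit level $h$ relative to the current occupation. Their later horizontal positions stay correlated with that first-hit position, since nothing forces the base to mix over the relevant vertical range. So uniformity given the stage-2 cluster fails. That their full trajectories are cylinder walks gives only marginal uniformity. For the same reason, your claimed stochastic domination by a uniform release of $M$ walkers is unsupported. The total count says nothing about how the excess walkers are spread among columns, and a concentration in a few columns is exactly what would produce a tall spike.

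The missing idea is a column-by-column bound on the walkers frozen at level $h$. The paper gets it from the Lawler--Bramson--Griffeath alive/ghost identity. On $G$ every site of $R_{h-1}\setminus R_0$ is settled, so the number $N_x$ frozen at $(x,h)$ equals $M_x-\widetilde L_x$. Here $M_x\sim\mathrm{Bin}(T,1/N)$ counts alive walks first hitting level $h$ at $(x,h)$. The term $\widetilde L_x$ counts ghost walks from all $N(h-1)$ sites below that do so; it is a sum of independent indicators with mean $h-1$. Bernstein's inequality then gives $\max_x N_x\le C_2\ell_*$ with high probability (Lemma~\ref{le:wellspread}).

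With this bound, the paused two-stage abelian decomposition and monotonicity in the released multiset and in the initial cluster (Lemma~\ref{le:mono-init}) dominate $A(T)$ pathwise by a \emph{deterministic} balanced release of $m\approx C_2\ell_*$ fresh walks per column from $R_h$. This restores exactly the independence your recurrence needs. Lemma~\ref{le:released} then compares that balanced release with a uniform release of about $4mN$ walkers and applies Proposition~\ref{pr:apriori}-(b). That last step is essentially your Step~3 computation with a larger constant.
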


The proof uses two lemmas. Lemma~\ref{le:wellspread}, using an observation of~\cite{asselah2025internal}, shows that on the high-probability event $\{\bar A_h(T) \supseteq R_{h-1}\setminus R_0\}$, the particles frozen at level $h$ are well spread out: no site of level $h$ carries more than $C_2 \ell_*$ frozen particles. Lemma~\ref{le:released} then controls the height of an auxiliary IDLA process built by releasing a balanced number of particles from the base.

We set up the probability space supporting all of the random variables that appear below. Let $(W_k)_{k=1}^T$ be independent infinite cylinder walks, with $W_k$ started at a vertex of $V_N \times \{0\}$ chosen according to $\pi_N \otimes \delta_0$. Independently of these, let $(U_z)_{z \in R_{h-1}\setminus R_0}$ be independent infinite cylinder walks indexed by sites of $R_{h-1} \setminus R_0$, with $U_z$ started at $z$.

The stopped process $(\bar A_h(t))_{0 \leq t \leq T}$ is built from the walks $(W_k)$ by the construction in Section~\ref{sec:inner}: particle $k$ follows $W_k$ until either reaching a site of $R_{h-1} \setminus R_0$ outside the current cluster (in which case it settles at that site), or reaching level $h$ (in which case it freezes at that level-$h$ site). Both the settling sites and the freezing sites contribute to $\bar A_h(t+1) = \bar A_h(t) \cup \{(\bar X_{t+1}, \bar Y_{t+1})\}$. Let
\[
  G \coloneqq \{\bar A_h(T) \supseteq R_{h-1} \setminus R_0\}\,.
\]
On $G$, every site of $R_{h-1}\setminus R_0$ is settled by some particle. By Proposition~\ref{pr:inner} (which gives the stronger inclusion $\bar A_h(T) \supseteq R_h$ with high probability), $\bbP(G^c) \leq N^{-(\nu+1)}$.

The auxiliary walks $(U_z)$ play the role of ghosts: by the strong Markov property of the cylinder walk, the post-settlement continuation of any particle that settles at a site $z \in R_{h-1}\setminus R_0$ is, conditionally on the cluster history up to settlement, an independent cylinder walk started at $z$. We may therefore couple the post-settlement continuations to the walks $U_z$ at the settling sites, without changing the joint law of the cluster $\bar A_h(T)$ together with the ghosts. Throughout the rest of this section we work with this coupling, so that the ``ghost from $z$'' (in the sense of Lawler--Bramson--Griffeath~\cite{lawler1992internal}) is the walk $U_z$ whenever $z$ is settled.

\begin{Lemma}[Well-spread frozen particles]\label{le:wellspread}
Let $N_x$ denote the number of particles frozen at $(x, h)$ in $\bar A_h(T)$. For every $\n > 0$ there exists $C_2 = C_2(\n) < \infty$ such that
	\[ \bbP\big( G \cap \{ \exists\, x \in V_N : N_x > C_2 \ell_* \} \big) \leq N^{-(\n+1)} \]
for $N$ large enough.
\end{Lemma}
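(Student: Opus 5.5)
The plan is to reuse the martingale $M_\zeta$ from the proof of Proposition~\ref{pr:inner}, this time with the stopping height equal to the actual $h = \lfloor T/N - \ell_*\rfloor$ (no renaming), and with $\zeta = (x,h)$ for a fixed $x \in V_N$. On the event $G$ the identity computed in the proof of Proposition~\ref{pr:inner} becomes an equality rather than an inequality, and it converts a large $N_x$ into a large upper deviation of $M_\zeta(T)$. We then bound that deviation with Freedman's inequality applied to $M_\zeta$ itself rather than to $-M_\zeta$, and finish with a union bound over the $N$ sites of level $h$.

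\emph{Step 1 (deterministic identity on $G$).} As in the proof of Proposition~\ref{pr:inner}, split the sum defining $M_\zeta(T)$ into three parts: particles that settle in $R_{h-1}\setminus R_0$, particles frozen at $\zeta$, and particles frozen elsewhere on level $h$, where $H_\zeta = 0$. This gives
\[
M_\zeta(T) = -\frac{T}{N} + \sum_{(x',y) \in \bar A_h(T)\cap (R_{h-1}\setminus R_0)} H_\zeta(x',y) + N_x .
\]
Each site of $R_{h-1}\setminus R_0$ is settled at most once. On $G$ every such site is settled, so the middle sum equals $\sum_{(x',y)\in R_{h-1}\setminus R_0} H_\zeta(x',y) = h-1$, by the layer-average identity already established. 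Using $h \geq T/N - \ell_* - 1$ we get, on $G$,
\[
M_\zeta(T) = N_x - \Big(\frac{T}{N} - h + 1\Big) \geq N_x - \ell_* - 2 .
\]
Hence $G \cap \{N_x > C_2\ell_*\} \subseteq \{M_\zeta(T) \geq (C_2 - 1)\ell_* - 2\}$.

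\emph{Step 2 (upper tail).} From the proof of Lemma~\ref{lem:M lower tail}, the increments satisfy $\xi_t \leq 1$ and the predictable quadratic variation satisfies $V(T) \leq T/N$. Freedman's inequality for $M_\zeta$ then gives, for every $\lambda>0$,
\[
\bbP(M_\zeta(T) \geq a) \leq \exp\big(-\lambda a + (e^\lambda - 1 - \lambda)T/N\big).
\]
Take $a = (C_2-1)\ell_* - 2 \geq \tfrac{C_2}{2}\ell_*$ for $C_2 \geq 4$ and $N$ large. If $a \leq T/(2N)$, choose $\lambda = aN/(2T) \leq 1/4$ as before, which yields the bound $\exp(-a^2N/(4T))$. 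If $a > T/(2N)$, choose $\lambda = 1$, which yields $\exp(-a + T/N) \leq \exp(-a/2)$. In the first case, $a^2 N/(4T) \geq (C_2^2/16)\, C_1^2 L_T$. In the second case, $a/2 \geq T/(4N) \geq (K_0/4)\log N$, and since $T \geq K_0 N \log N$ this is at least of order $\log N$ with a constant that can be made large. Choosing $C_2$ large in terms of $\nu$ and $C_1$ (and using $T\geq K_0 N\log N$) makes both bounds at most $N^{-(\nu+2)}$.

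\emph{Step 3 (union bound).} Summing over the $N$ choices of $x \in V_N$ gives the claimed $N^{-(\nu+1)}$. I expect the only delicate point to be Step 1: checking that on $G$ the harmonic-measure sum over settled sites is exactly $h-1$. This needs each site of $R_{h-1}\setminus R_0$ to be counted exactly once, with no double counting between settled and frozen particles, because frozen particles only occupy level $h$. The tail estimate in Step 2 is routine once the regime $a > T/(2N)$ is handled separately as above.
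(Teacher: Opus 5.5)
\textbf{Overall.} Your argument takes a genuinely different route from the paper's. After one small repair in Step~2 (below), it is correct.

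\textbf{Comparison with the paper.} The paper uses the Lawler--Bramson--Griffeath decomposition literally. It introduces auxiliary ghost walks $U_z$ and couples them to the post-settlement continuations via the strong Markov property. It then writes $N_x = M_x - \widetilde L_x$ on $G$, where $M_x \sim \mathrm{Bin}(T,1/N)$ counts alive walks first hitting level $h$ at $(x,h)$, and $\widetilde L_x$ is a sum of $N(h-1)$ independent indicators with mean $h-1$. Two Bernstein bounds finish the proof.

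Your $M_\zeta$ is the conditional-expectation (Doob) version of the same decomposition. Given the settling site, $H_\zeta(\bar X_t,\bar Y_t)$ is the probability that the alive walk would first hit level $h$ at $\zeta$, so the ghost contribution is integrated out. On $G$ that contribution becomes the deterministic quantity $h-1$. Your Step~1 identity $M_\zeta(T) = N_x - (T/N - h + 1)$ on $G$ is exact: settled sites are distinct and lie in $R_{h-1}\setminus R_0$, and frozen particles sit only on level $h$.

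This buys you three things:
\begin{enumerate}
\item a single tail bound instead of two;
\item no ghost walks or extra coupling;
\item the inner bound and the well-spread bound become the lower and upper tails of one martingale, with the same variance bound $V(T)\le T/N$.
\end{enumerate}
The paper's route, in exchange, uses only sums of independent indicators and stays closer to the classical alive/ghost picture.

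\textbf{The slip in Step~2.} In the case $a > T/(2N)$, the choice $\lambda=1$ gives $\exp(-a+(e-2)T/N)$. But $a > T/(2N)$ only yields $T/N < 2a$. Your claimed inequality $\exp(-a+T/N)\le\exp(-a/2)$ needs $a \ge 2T/N$, and for $T/(2N) < a < 2T/N$ the exponent is in fact positive. This intermediate regime does occur for every fixed $C_2$, because $\ell_*/(T/N) = C_1\sqrt{L_T N/T}$ sweeps through a range of values as $T$ varies.

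\textbf{Two easy repairs.}
\begin{enumerate}
\item In that case take $\lambda = 1/4$. Then $e^{1/4}-1-1/4\le 1/16$ and $T/N<2a$ give the bound $\exp(-a/8)$, and $a > T/(2N) \ge (K_0/2)\log N$ makes this small enough.
\item More simply, drop the case split. For $C_2\ge 4$ and $N$ large, $a=(C_2-1)\ell_*-2\ge\ell_*$. Since $\ell_*\le T/(2N)$ (condition (ii) in the proof of Proposition~\ref{pr:inner}), the upper-tail analogue of Lemma~\ref{lem:M lower tail} gives $\bbP(M_\zeta(T)\ge\ell_*)\le\exp(-C_1^2L_T/4)=e^{-(\nu+4)L_T}\le N^{-(\nu+4)}$. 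This analogue has the same proof, since $\xi_t\le 1$. It also shows that $C_2=4$ already suffices once $C_1=C_1(\nu)$ is fixed.
\end{enumerate}
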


\begin{proof}
Set $L_T \coloneqq \log N + \log_+(T/N)$ as before. Following~\cite{lawler1992internal}, we decompose the first hits at level $h$ into an alive contribution and a \emph{ghost} contribution.

For each $x \in V_N$, let
	\[ M_x \coloneqq \#\big\{ k \in \{1, \ldots, T\} : \text{the alive walk } W_k \text{ first hits level } h \text{ at } (x, h) \big\}\,. \]
Since the walks $(W_k)_{k\leq T}$ are independent and each is started at a uniform vertex of $V_N \times \{0\}$, by vertex-transitivity the marginal distribution of the first level-$h$ hit of $W_k$ is uniform on $V_N \times \{h\}$ (cf.\ the recurrence in the proof of Proposition~\ref{pr:apriori}). Hence $M_x$ is a sum of $T$ independent indicators of probability $1/N$, with $\bbE(M_x) = T/N$.

For each $x \in V_N$, let
	\[ \widetilde L_x \coloneqq \#\big\{ z \in R_{h-1} \setminus R_0 : \text{the auxiliary walk } U_z \text{ first hits level } h \text{ at } (x,h) \big\}\,. \]
Since the walks $(U_z)$ are independent, $\widetilde L_x$ is a sum of $|R_{h-1}\setminus R_0| = N(h-1)$ independent indicators. We compute its mean. For each fixed $1 \leq y \leq h-1$, let $H_{(x,h)}(\cdot, y)$ denote the probability that a cylinder walk started at $(\cdot, y)$ first hits level $h$ at $(x, h)$. By vertex-transitivity (the same argument as for $M_x$, applied at level $y$ instead of level $0$), $\sum_{z_1 \in V_N} H_{(x,h)}(z_1, y) = 1$. Summing over $y \in \{1, \ldots, h-1\}$ gives $\bbE(\widetilde L_x) = h - 1$.

Following the LBG decomposition, we split $M_x$ according to whether the particle actually reached level $h$ or settled earlier:
	\[ M_x = N_x + L_x\,, \]
where the first term $N_x$ counts the $k$ whose particle was frozen at $(x, h)$ (so the alive walk reached level $h$ for the first time at $(x, h)$ and the particle stopped there), and the second term $L_x$ counts the $k$ whose particle settled at some $z \in R_{h-1}\setminus R_0$ before reaching level $h$ but whose alive walk $W_k$ would have first hit level $h$ at $(x, h)$ had it been allowed to continue past the settling site. Equivalently, $L_x$ counts settled sites $z \in R_{h-1}\setminus R_0$ whose ghost first hits level $h$ at $(x, h)$. Under the coupling fixed in the setup above, the ghost from a settled $z$ is the auxiliary walk $U_z$. Hence $L_x = \#\{z \in R_{h-1}\setminus R_0\,:\, z \in \bar A_h(T) \text{ and } U_z \text{ first hits level } h \text{ at } (x,h)\}$. As was first noted in~\cite{asselah2025internal}, on the good event $G$, every $z \in R_{h-1}\setminus R_0$ is settled, so $L_x = \widetilde L_x$. We obtain the pathwise identity
	\begin{equation}\label{eq:Nx-bound}
	 N_x = M_x - \widetilde L_x \qquad \text{on } G\,.
	\end{equation}

Fix $x \in V_N$ and $C_2 \geq 6$. We bound $\bbP(G \cap \{N_x > C_2 \ell_*\})$ by union bound on the two events
	\[ E_1 \coloneqq \Big\{ M_x \geq \frac{T}{N} + \tfrac{C_2}{3}\,\ell_* \Big\}\,, \qquad
	E_2 \coloneqq \Big\{ \widetilde L_x \leq (h-1) - \tfrac{C_2}{3}\,\ell_* \Big\}\,. \]
Indeed, on $G$ and $\{N_x > C_2 \ell_*\}$, identity~\eqref{eq:Nx-bound} gives $M_x - \widetilde L_x > C_2 \ell_*$. If both $E_1$ and $E_2$ failed, we would have
\[
  M_x - \widetilde L_x < \tfrac{T}{N} + \tfrac{C_2}{3}\,\ell_* - (h-1) + \tfrac{C_2}{3}\,\ell_*
  \leq (\ell_* + 2) + \tfrac{2 C_2}{3}\,\ell_* \leq C_2\, \ell_*\,,
\]
for $C_2 \geq 6$ and $\ell_* \geq 2$ (using $T/N - (h-1) \leq \ell_* + 2$ from $h = \lfloor T/N - \ell_*\rfloor$), a contradiction.

We bound $\bbP(E_1)$ and $\bbP(E_2)$ separately by Bernstein's inequality for sums of independent indicators (we do not need $M_x$ and $\widetilde L_x$ to be jointly independent, as the union bound only requires the marginal tails). By the choice of $C_1$ and $K_0$ in the proof of Proposition~\ref{pr:inner}, $\ell_* \leq T/(2N)$ and $h - 1 \leq T/N$. For $E_1$,
\[
  \bbP(E_1) \leq \exp\!\Big( -\frac{(C_2 \ell_*/3)^2}{2 (T/N) + 2 C_2 \ell_*/9} \Big)
  \leq \exp\!\Big( -\frac{C_2^2\,\ell_*^2}{18\, (T/N) + 2 C_2 \ell_*} \Big)\,.
\]
Using $\ell_* \leq T/(2N)$, the denominator is $\leq (18 + C_2)\,(T/N) \leq 2 C_2\, (T/N)$ for $C_2 \geq 18$, so the exponent is at most $-\frac{C_2}{2}\,\ell_*^2\,(N/T) = -\frac{C_2 C_1^2}{2}\,L_T$, where we used $\ell_*^2 = C_1^2\,(T/N)\, L_T$. The same bound applies to $\bbP(E_2)$ with $h-1$ in place of $T/N$. Hence
\[
  \bbP(E_1) + \bbP(E_2) \leq 2 \exp\!\Big( -\frac{C_2 C_1^2}{2}\,L_T \Big)\,.
\]
Taking the union bound over $x \in V_N$ and using $L_T \geq \log N$,
\[
  \bbP\big( G \cap \{\exists\, x \in V_N : N_x > C_2 \ell_*\} \big)
  \leq 2 N \exp\!\Big( -\tfrac{C_2 C_1^2}{2}\,\log N \Big) \leq N^{-(\n+1)}
\]
for $C_2$ large enough depending on $\n$ and $C_1$.
\end{proof}

For a positive integer $m$, let $\widetilde A^{(m)}$ denote the IDLA cluster obtained by starting from $R_0$ and releasing exactly $m$ particles from each site of $V_N \times \{0\}$, in any order (the resulting cluster is independent of the order by the abelian property).

\begin{Lemma}[Height of a balanced release]\label{le:released}
For every $\n > 0$ there exist constants $C_3 = C_3(\n) < \infty$ and $K_1 = K_1(\n) < \infty$ such that, for every integer $m \geq K_1 \log N$,
	\[ \bbP\big( \widetilde A^{(m)} \nsubseteq R_{C_3 m} \big) \leq N^{-(\n+2)} \]
for $N$ large enough.
\end{Lemma}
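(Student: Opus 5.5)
The plan is to mimic the proof of Proposition~\ref{pr:apriori}, with the uniform start replaced by the balanced release. We run the $Nm$ particles in rounds. Round $i$, for $1 \leq i \leq m$, releases one particle from each site of $V_N \times \{0\}$. By the abelian property the resulting cluster is $\widetilde A^{(m)}$. For a single round, the starting points are a deterministic enumeration of $V_N \times \{0\}$. Averaging over the order, each particle within a round may be viewed as starting from a uniformly random point. Rather than work with that, the cleaner route is to compare with the process in which all $Nm$ particles start from i.i.d.\ uniform points on $V_N \times \{0\}$.

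The key observation is that the proof of~\eqref{LBG_up} in Proposition~\ref{pr:apriori} uses only one input: the horizontal position of a walker at its first hitting time of level $m'-1$ is uniform on $V_N$ and independent of the current cluster. For the balanced release we replace this by an averaged version. We order the $Nm$ particles arbitrarily and define $Z_{m'}(t)$, the occupation of level $m'$, as before. For a walker started at a fixed $(x,0)$, its position at the first hit of level $m'-1$ has law $\mu_x$. This law is not uniform, but the walkers are independent and $\sum_{x\in V_N} \mu_x = N\pi_N$ by double stochasticity. The step-$t$ recurrence then fails individually but holds after summing over a full round.

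Concretely, within round $i$ we have
\[ \bbE\big[Z_{m'}(\text{end of round } i) - Z_{m'}(\text{start})\big] \leq \sum_{x} \bbE\big[\mu_x(A \cap \text{level } m'-1)\big], \]
where $A$ is the cluster at the end of the round, since clusters only grow. Bounding this by the end-of-round occupation gives a recurrence in rounds $\mu_{m'}(i) - \mu_{m'}(i-1) \leq \mu_{m'-1}(i)$, which costs one extra round per level. To avoid that loss, we process the round in the order that releases the particle from $x$ with its ghost. Alternatively, and more simply, we can take the crude bound and iterate it:
\[ \mu_{m'}(i) \leq \sum_{j \leq i} \mu_{m'-1}(j), \]
with $\mu_1(i) \leq N$. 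By induction this gives $\mu_{m'}(i) \leq N\binom{i+m'-1}{m'}$ or similar.

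With $m' = C_3 m$ and $i = m$, the bound is $N\binom{(C_3+1)m}{C_3 m} \leq N e^{c m}$. This does not decay, so the crude version is too lossy. We therefore need the sharp version, in which each level's occupancy is at most $1/N$ times the prior level per particle, matching~\eqref{LBG_up} with $t = Nm$. That yields $\mu_{m'} \leq N (e m / m')^{m'}$, which for $m' = C_3 m$ with $C_3 \geq e^2$ is at most $N e^{-C_3 m} \leq N^{-(\n+2)}$ once $m \geq K_1 \log N$.

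The sharp bound needs a comparison between balanced and i.i.d.-uniform starts. I would prove it by a stochastic domination or coupling: the i.i.d.\ uniform starts $(\xi_k)$ give a multiset with $N_x$ particles at $x$, which is multinomial. Alternatively, and this is the route I would try first, we can avoid the comparison altogether. Since $\widetilde A^{(m)}$ is a deterministic-start process, we first release $m$ particles from each site, stopped at height $1$ (level $1$ fills). We then argue inductively layer by layer, as in the Lawler--Bramson--Griffeath mean estimate. The main obstacle is exactly this non-uniformity of individual starts, which is where vertex-transitivity enters: the law of $Z_{m'}$ is automorphism-invariant only after averaging over the round. I would handle it by randomizing the release order and labelling uniformly, which makes the $k$-th released particle's start uniform and independent of the past up to a without-replacement correlation. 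That correlation can only help, since the remaining starts are balanced, and so the recurrence~\eqref{eq:mu-recurrence} survives with $1/N$ replaced by $1/N$ times a factor of at most $1 + o(1)$.
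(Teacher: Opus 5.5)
Your final route has a genuine gap. The recurrence \eqref{eq:mu-recurrence} needs the $(t{+}1)$-st walker's position at its first visit to level $m'-1$ to be uniform on $V_N$ conditionally on $A(t)$. With a uniformly random ordering of the balanced multiset, this fails. Given the history, the next start is $x$ with probability $w_x(t) = (m - c_x(t))/(Nm-t)$, where $c_x(t)$ counts earlier releases from $x$. These weights are functions of the same history that produced $A(t)$, and they are far from uniform late in the release: at $t = Nm-1$ a single site has weight $1$. Using $\sum_x \mu_x(y) = 1$, the best general bound is $\bbP(Y_{t+1} = m' \mid \cF_t) \le \max_x w_x(t)\, Z_{m'-1}(t) \le \frac{m}{Nm-t}\, Z_{m'-1}(t)$, not $(1+o(1))\,Z_{m'-1}(t)/N$. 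Iterating this replaces the effective time $\sum_t 1/N = m$ by $\sum_t m/(Nm-t) \approx m\log(Nm)$, which only yields a height bound of order $m\log(Nm)$ rather than $C_3 m$. Your claim that the without-replacement correlation ``can only help'' would require a negative-correlation inequality between the remaining release counts and the cluster profile. That is a heuristic you do not prove, and it is not a standard property of IDLA.

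The comparison you mention and then set aside is the route that works, and it is the paper's argument. It needs oversampling, though. With exactly $Nm$ i.i.d.\ uniform starts, a constant fraction of sites receive fewer than $m$ particles, so there is no monotone coupling with $\widetilde A^{(m)}$ at $t = Nm$, and you cannot obtain \eqref{LBG_up} at $t = Nm$ this way. Instead, release $\tau = \lceil 4mN \rceil$ particles from i.i.d.\ uniform sites of $V_N \times \{0\}$. Each site's count is $\mathrm{Bin}(\tau, 1/N)$ with mean at least $4m$. By Chernoff and a union bound, every count is at least $m$ except with probability $N e^{-9m/8} \le N^{-(\nu+3)}$; this is where $m \ge K_1 \log N$ is genuinely needed. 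On that event, couple the $j$-th release from $x$ to a fixed walk $V_{x,j}$; the abelian property then gives $\widetilde A^{(m)} \subseteq A''(\tau)$. Proposition~\ref{pr:apriori}(b) with $\beta = e^3$ applies since $\tau \ge 4K_1 N\log N$. It gives $A''(\tau) \subseteq R_{e^3\tau/N} \subseteq R_{5e^3 m}$ except with probability $e^{-e^3\tau/N} \le e^{-4e^3 m}$. The constant lost to oversampling is absorbed into $C_3$, and no analysis of non-uniform individual starts is needed.
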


\begin{proof}
Let $(A''(t))_{t \geq 0}$ be the auxiliary IDLA process started from $R_0$ in which each particle is released from a site of $V_N \times \{0\}$ chosen uniformly at random. Fix $C_4 \coloneqq 4$ and set $\tau \coloneqq \lceil C_4\, m\, N\rceil$. The number $N_x''(\tau)$ of particles released from a given site $x \in V_N$ in $(A''(t))$ up to time $\tau$ is $\mathrm{Bin}(\tau, 1/N)$ with mean at least $C_4 m$. By the Chernoff bound,
	\[ \bbP\big( N_x''(\tau) < m \big) \leq \exp\!\Big( -\tfrac{(C_4 - 1)^2}{2 C_4}\,m \Big) = \exp\!\Big( -\tfrac{9}{8}\, m \Big)\,. \]
A union bound over $x \in V_N$ gives
	\[ \bbP\!\Big( \min_{x \in V_N} N_x''(\tau) < m \Big) \leq N \exp\!\Big( -\tfrac{9}{8}\,m \Big) \leq \exp\!\big( \log N - \tfrac{9}{8}\, K_1 \log N\big) \leq N^{-(\n+3)} \]
for $K_1 = K_1(\n)$ large enough, since $m \geq K_1 \log N$.

On the complementary event, the cluster $A''(\tau)$ contains $\widetilde A^{(m)}$ by the abelian property (the release multiset of $A''(\tau)$ contains the release multiset that defines $\widetilde A^{(m)}$, and releasing extra particles only enlarges the cluster). Hence
	\[ \bbP\big( \widetilde A^{(m)} \nsubseteq R_{C_3 m} \big) \leq N^{-(\n+3)} + \bbP\big( A''(\tau) \nsubseteq R_{C_3 m} \big)\,. \]

Set $\b \coloneqq e^3 > e^2$, and let $K_{\mathrm{ap}} \coloneqq K_0(\b)$ be the constant from Proposition~\ref{pr:apriori}-(b) (we rename it locally to avoid clash with the section-wide $K_0$ from Proposition~\ref{pr:inner}). Since $\tau \geq C_4\, m\, N \geq C_4\, K_1\, N\log N$, enlarging $K_1$ further (to ensure $C_4 K_1 \geq K_{\mathrm{ap}}$) gives $\tau \geq K_{\mathrm{ap}}\, N\log N$, so Proposition~\ref{pr:apriori}-(b) applies at time $\tau$ with this $\b$:
	\[ \bbP\big( A''(\tau) \nsubseteq R_{e^3\,\tau/N} \big) \leq \exp(-e^3\,\tau/N) \leq \exp(-e^3\, C_4\, m) \leq N^{-(\n+3)} \]
for $K_1$ large enough (using $m \geq K_1 \log N$).

Now set
\[
  C_3 \coloneqq e^3\,(C_4 + 1)\,.
\]
Because $\tau \leq C_4 m N + 1 \leq (C_4 + 1)\,m\,N$ for $m \geq 1$, we have
\[
  \frac{e^3\,\tau}{N} \leq e^3\,(C_4 + 1)\,m = C_3\, m\,,
\]
hence $R_{e^3\,\tau/N} \subseteq R_{C_3 m}$, and so
\[
  \{A''(\tau) \nsubseteq R_{C_3 m}\} \subseteq \{A''(\tau) \nsubseteq R_{e^3\,\tau/N}\}\,.
\]
Combining,
\[
  \bbP\big(\widetilde A^{(m)} \nsubseteq R_{C_3 m}\big) \leq 2\, N^{-(\n+3)} \leq N^{-(\n+2)}
\]
for $N$ large.
\end{proof}

\begin{proof}[Proof of Proposition~\ref{pr:outer}]
Throughout this proof we apply Proposition~\ref{pr:inner} and Lemmas~\ref{le:wellspread} and~\ref{le:released} with parameter $\n + 1$ in place of the section-wide $\n$, so that each of the input lemmas produces a failure event of probability at most $N^{-(\n + 2)}$ for $N$ large. To avoid clashing with the section-wide constants $C_1, \ell_*, h$ tied to $\n$, we denote the resulting constants (for this proof only) by
\[
  \tilde C_1 \coloneqq C_1(\n + 1),\quad
  \tilde K_0 \coloneqq K_0(\n + 1),\quad
  \tilde\ell_* \coloneqq \tilde C_1 \sqrt{(T/N) L_T},\quad
  \tilde h \coloneqq \lfloor T/N - \tilde\ell_*\rfloor,
\]
and $\tilde C_2 \coloneqq C_2(\n + 1), \tilde C_3 \coloneqq C_3(\n + 1), \tilde K_1 \coloneqq K_1(\n + 1)$. For $C$ sufficiently large ($C \geq \tilde K_0$), the hypothesis $T \geq C N\log N$ of this section ensures $T \geq \tilde K_0 N\log N$, so Proposition~\ref{pr:inner} applies at parameter $\n+1$. Define
	\[ \tilde G \coloneqq \{\bar A_{\tilde h}(T) \supseteq R_{\tilde h - 1} \setminus R_0\}\,, \]
the analogue of $G$ at parameter $\nu+1$, and let $\tilde N_x$ denote the analogue of $N_x$ for $\bar A_{\tilde h}(T)$. We work with $\tilde G$, $\tilde\ell_*$, $\tilde h$, $\tilde N_x$ in place of their $\n$-versions throughout.

Set $m \coloneqq \lceil \tilde C_2 \tilde\ell_* \rceil + 1$. Since $T \geq \tilde K_0 N\log N$, we have
$\tilde\ell_* \geq \tilde C_1 \sqrt{\tilde K_0}\,\log N$,
hence $m \geq \tilde C_2 \tilde C_1 \sqrt{\tilde K_0}\,\log N$. By replacing $\tilde C_2$ with $\max(\tilde C_2, \tilde K_1/(\tilde C_1\sqrt{\tilde K_0}))$ if necessary, we may assume $m \geq \tilde K_1 \log N$, so that $m$ satisfies the hypothesis of Lemma~\ref{le:released} at parameter $\n + 1$.

Define the good event
	\[ W \coloneqq \tilde G \cap \{ \tilde N_x \leq \tilde C_2 \tilde\ell_* \text{ for every } x \in V_N \}\,, \]
and note that $\bbP(W^c) \leq 2 N^{-(\n + 2)}$ by Proposition~\ref{pr:inner} and Lemma~\ref{le:wellspread}, both at parameter $\n + 1$.

We enlarge the probability space to support, independently of $(W_k)_{k=1}^T$ and the ghosts $(U_z)$, a family $(V_{x,i})_{x \in V_N,\, i \geq 1}$ of independent cylinder walks with $V_{x,i}$ started at $(x, \tilde h)$. By the strong Markov property of the cylinder walk, for each column $x$ we may couple $V_{x, 1}, \ldots, V_{x, \tilde N_x}$ to the post-freeze tails of the $\tilde N_x$ particles that froze at $(x, \tilde h)$ in $\bar A_{\tilde h}(T)$, so that these coincide almost surely; conditionally on the first stage, the remaining $V_{x, \tilde N_x + 1}, \ldots, V_{x, m}$ remain independent cylinder walks started at $(x, \tilde h)$. This coupling does not alter the joint law of $\bar A_{\tilde h}(T)$ and the tail walks.

To carry out the two-stage abelian decomposition correctly, we use the paused version of the stopped process. Let $P_{\tilde h}(T)$ denote the cluster obtained from the same alive walks $W_1, \ldots, W_T$ but with the modification that, when a particle first reaches level $\tilde h$, it is paused at that site without the site being added to the cluster. Sites are added to $P_{\tilde h}(T)$ only when a particle settles strictly below level $\tilde h$, so $P_{\tilde h}(T) \subseteq R_{\tilde h - 1}$. The multiset of pause locations in $P_{\tilde h}(T)$ coincides with the multiset of freeze locations in $\bar A_{\tilde h}(T)$, since the two constructions agree below level $\tilde h$ and particles are paused/frozen at their first hit of level $\tilde h$; in particular, the multiplicities $\tilde N_x$ are the same. By the abelian property of IDLA~\cite[Section~2]{levine2018long}, the cluster $A(T)$ is obtained pathwise by first building $P_{\tilde h}(T)$ from the alive walks $W_1, \ldots, W_T$, and then, for each column $x \in V_N$, resuming the $\tilde N_x$ paused particles from $(x, \tilde h)$ along their post-pause tails $V_{x, 1}, \ldots, V_{x, \tilde N_x}$ (each settling at the first site outside the current cluster).
Define the auxiliary cluster $B$ to be the result of the same two-stage procedure but releasing all $m$ walks $V_{x, 1}, \ldots, V_{x, m}$ per column instead of only the first $\tilde N_x$. On the event $W$, $\tilde N_x \leq \tilde C_2 \tilde\ell_* \leq m - 1$ for every $x$, so the multiset of walks used to build $A(T)$ is a subset of the multiset used to build $B$. By abelian monotonicity in the released multiset,
\[ A(T) \;\subseteq\; B \qquad \text{pathwise on } W\,. \]

We next compare $B$ to the balanced-release cluster $\widetilde A^{(m)}$ from Lemma~\ref{le:released}. We use the following standard monotonicity of IDLA in the initial cluster, which we state as a lemma.

\begin{Lemma}[Monotonicity in the initial cluster]\label{le:mono-init}
Let $\mathcal{B}_1 \subseteq \mathcal{B}_2 \subsetneq V_N \times \bbZ$ be two proper subsets of the cylinder, and let $\o = (\o_1, \o_2, \ldots)$ be a sequence of infinite cylinder walks with prescribed starting sites. For each $i \in \{1, 2\}$ and $k \geq 0$, let $\Phi_k(\mathcal{B}_i)$ denote the cluster obtained from $\mathcal{B}_i$ after the first $k$ walks have been released and settled according to the usual IDLA rule (the $j$-th walk settles at the first site of $\o_j$ outside the current cluster). Assume that for each $i \in \{1, 2\}$ and each $k \geq 0$, the first exit time of $\o_{k+1}$ from $\Phi_k(\mathcal{B}_i)$ is almost surely finite. Then
\[
  \Phi_k(\mathcal{B}_1) \;\subseteq\; \Phi_k(\mathcal{B}_2) \qquad \text{for every } k \geq 0\,.
\]
\end{Lemma}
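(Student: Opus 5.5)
We argue by induction on $k$, with the same walk sequence $\o$ driving both clusters. The base case $k=0$ is the hypothesis $\Phi_0(\mathcal{B}_1)=\mathcal{B}_1 \subseteq \mathcal{B}_2 = \Phi_0(\mathcal{B}_2)$. For the inductive step, assume $C_1 \coloneqq \Phi_k(\mathcal{B}_1) \subseteq C_2 \coloneqq \Phi_k(\mathcal{B}_2)$. Let $\sigma_i$ be the first exit time of $\o_{k+1}$ from $C_i$; both are finite almost surely by assumption. Write $z_i \coloneqq \o_{k+1}(\sigma_i)$, so that $\Phi_{k+1}(\mathcal{B}_i) = C_i \cup \{z_i\}$. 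Since $C_1 \subseteq C_2$, leaving $C_2$ forces leaving $C_1$, and therefore $\sigma_1 \leq \sigma_2$. Because $C_2 \cup \{z_2\} \supseteq C_2$ already contains $C_1$, it suffices to show that $z_1 \in C_2 \cup \{z_2\}$.

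We split into two cases.
\begin{itemize}
\item If $\sigma_1 < \sigma_2$, then by definition of $\sigma_2$ every site visited by $\o_{k+1}$ before time $\sigma_2$ lies in $C_2$. In particular $z_1 = \o_{k+1}(\sigma_1) \in C_2$.
\item If $\sigma_1 = \sigma_2$, then $z_1 = z_2$.
\end{itemize}
In either case $\Phi_{k+1}(\mathcal{B}_1) = C_1 \cup \{z_1\} \subseteq C_2 \cup \{z_2\} = \Phi_{k+1}(\mathcal{B}_2)$, which closes the induction.

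The only point needing care is well-definedness: the settling sites must exist, which is exactly the finiteness-of-exit-times hypothesis. The assumption that $\mathcal{B}_2$ is a proper subset is only needed to make that hypothesis meaningful. The inductive comparison itself is a direct pathwise argument, and we expect no real obstacle. Note that the argument never uses the cylinder structure, the starting sites, or vertex-transitivity, so the lemma holds for any deterministic walk sequence.
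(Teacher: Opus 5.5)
Your proposal is correct and follows essentially the same pathwise induction as the paper, using the same key observation that $\sigma_1 \le \sigma_2$. You split on $\sigma_1 < \sigma_2$ versus $\sigma_1 = \sigma_2$, while the paper splits on whether the settling site $p_1$ already lies in $\Phi_k(\mathcal{B}_2)$; this is the same dichotomy viewed from the other side.
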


\begin{proof}
Induct on $k$. The case $k = 0$ is the assumption $\mathcal{B}_1 \subseteq \mathcal{B}_2$. Suppose the inclusion holds at step $k$. Let $\tau_i$ be the first exit time of $\o_{k+1}$ from $\Phi_k(\mathcal{B}_i)$, which is finite almost surely by the exit hypothesis, and let $p_i \coloneqq \o_{k+1}(\tau_i)$ be the corresponding settling site. Since $\Phi_k(\mathcal{B}_1) \subseteq \Phi_k(\mathcal{B}_2)$, we have $\tau_1 \leq \tau_2$.

If $p_1 \in \Phi_k(\mathcal{B}_2)$, then
\[
  \Phi_{k+1}(\mathcal{B}_1) \;=\; \Phi_k(\mathcal{B}_1) \cup \{p_1\} \;\subseteq\; \Phi_k(\mathcal{B}_2) \;\subseteq\; \Phi_{k+1}(\mathcal{B}_2)\,.
\]
Otherwise, $p_1 \notin \Phi_k(\mathcal{B}_2)$, which means the walk $\o_{k+1}$ is already outside $\Phi_k(\mathcal{B}_2)$ at time $\tau_1$; hence $\tau_2 \leq \tau_1$. Combined with $\tau_1 \leq \tau_2$, this gives $\tau_1 = \tau_2$ and $p_1 = p_2$, so
\[
  \Phi_{k+1}(\mathcal{B}_1) \;=\; \Phi_k(\mathcal{B}_1) \cup \{p_1\} \;\subseteq\; \Phi_k(\mathcal{B}_2) \cup \{p_2\} \;=\; \Phi_{k+1}(\mathcal{B}_2)\,.
\]
This closes the induction.
\end{proof}

Conditional on $P_{\tilde h}(T)$, apply Lemma~\ref{le:mono-init} pathwise with $\mathcal{B}_1 = P_{\tilde h}(T)$ (which satisfies $P_{\tilde h}(T) \subseteq R_{\tilde h - 1} \subseteq R_{\tilde h}$, and in particular is a proper subset of $V_N \times \bbZ$) and $\mathcal{B}_2 = R_{\tilde h}$, driving both processes with the same $Nm$ fresh walks $(V_{x, i})_{x \in V_N,\, 1 \leq i \leq m}$ released from level $\tilde h$. The evolving clusters $\Phi_k(\mathcal{B}_i)$ are contained in the finite-height slab $R_{\tilde h + k}$ for every $k \geq 0$, so each walk $V_{x, i}$ needs only to reach vertical level $\tilde h + k + 1$ to exit the current cluster. Because the vertical coordinate of the cylinder walk is a recurrent lazy symmetric random walk on $\bbZ$, this happens almost surely, and the exit hypothesis of Lemma~\ref{le:mono-init} is satisfied. Let $B'$ denote the resulting cluster starting from $R_{\tilde h}$. Then $B \subseteq B'$ pathwise (unconditionally, since $P_{\tilde h}(T) \subseteq R_{\tilde h}$ holds deterministically). By vertical translation invariance of the cylinder kernel, $B' - (0, \tilde h)$ has the same law as $\widetilde A^{(m)}$, where $\widetilde A^{(m)}$ is the balanced-release cluster from Lemma~\ref{le:released}.

On the event $W$, $A(T) \subseteq B \subseteq B'$ pathwise. Since $B' - (0, \tilde h)$ has the same law as $\widetilde A^{(m)}$ by vertical translation invariance of the cylinder kernel, Lemma~\ref{le:released} applied at parameter $\n+1$ gives
\[
  \bbP\big(B' \nsubseteq R_{\tilde h + \tilde C_3 m}\big) \;=\; \bbP\big(\widetilde A^{(m)} \nsubseteq R_{\tilde C_3 m}\big) \;\leq\; N^{-(\n+2)}\,.
\]
Setting $r \coloneqq \tilde C_3 m \leq \tilde C_3 (\tilde C_2 \tilde\ell_* + 2)$, combining with $\tilde h \leq T/N$ we obtain
\[
  \bbP\big( A(T) \nsubseteq R_{T/N + r} \big) \leq \bbP(W^c) + \bbP\big(B' \nsubseteq R_{\tilde h + \tilde C_3 m}\big) \leq 3 N^{-(\n + 2)} \leq N^{-(\n+1)}
\]
for $N$ large enough. Finally, $r \leq (\tilde C_2 \tilde C_3 + 2 \tilde C_3/\tilde\ell_*)\,\tilde\ell_* \leq (\tilde C_2 \tilde C_3 + 1)\,\tilde\ell_*$ for $\tilde\ell_* \geq 2 \tilde C_3$, which holds for $N$ large. Since $\tilde\ell_* = \tilde C_1 \sqrt{(T/N) L_T} = (\tilde C_1/C_1)\,\ell_*$ by the definition of $\tilde\ell_*$, we obtain
\[
  r \;\leq\; (\tilde C_2\,\tilde C_3 + 1)\,\frac{\tilde C_1}{C_1}\,\ell_*\,.
\]
Therefore Proposition~\ref{pr:outer} holds with the explicit constants
\[
  C' \coloneqq C'(\n) \coloneqq (\tilde C_2\,\tilde C_3 + 1)\,\frac{\tilde C_1}{C_1}\,, \qquad K' \coloneqq \tilde K_0 = K_0(\n + 1)\,,
\]
both depending on $\n$ only, completing the proof.
\end{proof}

\subsection{Reduction to polynomial mixing time}\label{sec:reduction}
The inner and outer bounds of Sections~\ref{sec:inner} and~\ref{sec:outer} give Theorem~\ref{th:max_fluct} in the regime $T \leq T^\sharp$. In this subsection we handle the regime $T^\sharp \leq T \leq N^m$ by reducing it to the former via an abelian layered-release coupling due to Silvestri~\cite{silvestri2020internal}.

\begin{Proposition}\label{pr:reduction}
Fix $\n > 0$ and $m \geq 1$, with $T^\sharp$ as in the statement of Theorem~\ref{th:max_fluct}. For every integer $T$ with $T^\sharp \leq T \leq N^m$ and every $N$ large enough, there exist an integer $T'$ with $1 \leq T' \leq T^\sharp$ and a non-negative integer $k$ with $T' + N k = T$, together with a coupling of $A(T)$ and an IDLA process $(A_*(t))_{t \geq 0}$ on $\bbG_N$ starting from $A_*(0) = R_0$, such that
	\[ \bbP\big( A(T) = A_*(T') + R_k \big) \geq 1 - N^{-(\n+1)}\,, \]
where $A_*(T') + R_k \coloneqq \{(x, y+k) : (x, y) \in A_*(T')\}$.
\end{Proposition}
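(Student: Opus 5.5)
We will prove the statement by splitting the $T$ particles into layers and releasing them in stages, using the abelian property. Write $T = T' + Nk$, where $k$ is chosen so that $T' \in [T^\sharp/2, T^\sharp]$, say $T' \coloneqq T - Nk$ with $k \coloneqq \lfloor (T - T^\sharp/2)/N \rfloor$. We realise the $T$ walks so that the first $Nk$ particles are released ``in bulk'' and the last $T'$ particles are fresh. The target is the following. With high probability, the $Nk$ early particles fill exactly the layers $V_N \times \{1, \ldots, k\}$. The remaining $T'$ particles then behave as an IDLA started from $R_k$, which by vertical translation invariance of the cylinder kernel is $A_*(T') + R_k$.

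The naive version fails, because $Nk$ uniform particles do not fill $R_k$ exactly. Following Silvestri's layered-release coupling, we instead organise the construction by $N$-blocks. Process the particles in batches. After the cluster reaches height about $H \coloneqq T^\sharp/N$ (at time of order $T^\sharp$), Theorem~\ref{th:max_fluct} in the regime already proved, together with Propositions~\ref{pr:inner} and~\ref{pr:outer}, shows that $A(t)$ is sandwiched between $R_{t/N - \Delta}$ and $R_{t/N + \Delta}$, with $\Delta \ll H$. In particular the bottom layer $V_N \times \{1\}$ is full.

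The key step is the following. Once the cluster is full up to height $\Delta' \gg \Delta$, a walk started on $V_N \times \{0\}$ must travel vertical distance $\gtrsim T^\sharp/N - \Delta = \sqrt{\t_{mix}}(\log N)^2(1 - o(1))$ before exiting. A lazy symmetric walk needs order $\t_{mix}(\log N)^4$ steps to do this, with overwhelming probability, and about half of these steps are horizontal. Hence, by the time it first reaches level $1$ for the last time before exiting, its horizontal coordinate has mixed to within $N^{-c\log N}$ of uniform in total variation, uniformly over its starting point. We use this to couple a particle started uniformly on $V_N \times \{0\}$ with a particle started uniformly on $V_N \times \{1\}$; equivalently, the whole dynamics shifts up by one layer. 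Iterating this shift $k$ times gives the identity $A(T) = A_*(T') + R_k$.

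Concretely, we will use an induction on $j = 0, \ldots, k$. We couple $A(T' + Nj)$ with $A_*(T') + R_j$, replacing $N$ of the early particles by a full deterministic layer at each step. At step $j$, the $N$ extra particles fill the new bottom layer. This is justified by the abelian property: release them first, conditionally on the already-full layers, and use the mixing coupling of the previous paragraph so that their exit positions coincide with those of a uniform release from one level higher. The per-step coupling failure probability is $N \cdot e^{-c(\log N)^2}$ (from mixing over $\gtrsim \t_{mix}(\log N)^2$ horizontal steps), plus $N^{-(\n+m+3)}$ from the fluctuation bound at times $\le T^\sharp$. Union bounding over $k \le N^m$ steps gives total failure probability at most $N^{-(\n+1)}$.

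The main obstacle is making this coupling precise. Specifically, one must show that, conditionally on the cluster being full up to height $\gg \Delta$, the exit location of a walk from level $0$ and that of a walk from a uniform point on level $1$ can be coupled to agree except on an event of probability $e^{-c(\log N)^2}$. The approach we would try first is to couple the horizontal coordinates at the first return time after $\t_{mix}(\log N)^2$ steps, using submultiplicativity of total variation distance. The vertical coordinates are handled by the fact that the time to climb the full slab exceeds this with overwhelming probability, which is a standard Gaussian tail bound for the lazy walk.
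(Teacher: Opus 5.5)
There is a genuine gap. The step that is supposed to make the $N$ extra particles ``fill the new bottom layer'' is never proved, and the mixing argument you offer for it does no work.

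Your induction is fine as bookkeeping. On the event $A(T'+Nj) = A_*(T') + R_j$, the next $N$ particles see a cluster containing $R_j$. They can therefore be coupled with the next $N$ particles of $A_*$ shifted by $j$, giving $A(T'+N(j+1)) = A_*(T'+N) + R_j$. Everything then reduces to a one-step claim: $A_*(T'+N)$ and $A_*'(T') + R_1$ (with $A_*'$ a fresh copy) can be coupled to coincide with probability at least $1 - N^{-(\nu+m+1)}$. By the abelian property, this says that IDLA started from the random cluster $A(N)$ and IDLA started from the flat cluster $R_1$ become \emph{identical} after $T'$ further particles. That is a genuine forgetting-of-the-initial-profile statement, and your key step does not address it.

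A walk started uniformly on $V_N \times \{0\}$ and one started uniformly on $V_N \times \{1\}$ already have exactly the same exit law from any cluster containing $R_0$. This follows from the strong Markov property at the hitting time of level $1$ together with invariance of the uniform measure under $P_N$, exactly as in the proof of Proposition~\ref{pr:apriori}. So that coupling needs neither mixing nor a filled slab, and it cannot shift the cluster. Reordering the particles does not help either: released first, the $N$ extra particles produce $A(N) \neq R_1$ (the very obstruction you noted for the naive version), and released last they settle on top.

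What is missing is a coupling in which mixing acts on particles whose starting positions are \emph{non-uniform and configuration-dependent}. One way to do this is as follows. Pause every particle at its first visit to a level $H$ lying below the inner bound of both processes. On the event that $R_{H-1}$ is full in both, the two processes carry the same number of paused particles, but with different column counts. With high probability every column carries many paused particles in both processes. So you can first release a common balanced batch from level $H$, which builds a filled slab of height at least $b\sqrt{\tau_{mix}}\log N$ above $H$. After that, pair the remaining paused particles of the two processes arbitrarily. Couple their vertical coordinates identically, and coalesce their horizontal coordinates by a maximal coupling after order $\tau_{mix}\log N$ horizontal steps, before they can cross the slab. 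The two clusters then stay equal. The coupling errors must be summed over all particles released after the slab is built, not just $N$ of them.

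This is the role of Silvestri's layered-release coupling~\cite{silvestri2020internal}, with buffers of size $b\sqrt{\tau_{mix}}\log N$, which the paper invokes as its proof. Without an argument of this kind, the identity $A(T) = A_*(T') + R_k$ is not established.
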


\begin{proof}
This is Step~1 of the proof of~\cite[Theorem~4]{silvestri2020internal}, which produces  a final reduced time bounded by $\alpha N\sqrt{\t_{mix}}\,(\log N)^2$ for a universal constant $\alpha$ that may be made as small as desired by choosing enough iterations; in particular one can arrange $\alpha \leq 1$, so that the final reduced time satisfies $T' \leq T^\sharp$. The identity $T' + Nk = T$ is forced by cluster-size conservation under the set-equality coupling; equivalently, $k$ is the total effective vertical shift accumulated across the iterations, which equals the cumulative raw layer count minus the buffers of size $b\sqrt{\t_{mix}}\log N$ per iteration.
\end{proof}

\section*{Acknowledgements}
This project was initiated during a visit to Cornell University, and we are grateful to Lionel Levine for many stimulating discussions. A.B.\ was partially supported by NSF grant DMS-2202940. A.Y.\ was partially supported by the Israel Science Foundation, grant no.\ 954/21. V.S.\ acknowledges financial support from INdAM--GNAMPA, Indam  CUP B83C24008390005 and from the FIS~2 project ``Understanding pattern formation in nature via complex analysis'' CUP B53C24009510001. 

\medskip

\bibliography{HLbib2}
\bibliographystyle{alpha}
\vfill 
\end{document}